\titleformat*{\section}{\large\bfseries}
\titleformat*{\subsection}{\normalsize\bfseries}
\titleformat*{\subsubsection}{\small\bfseries}
\theoremstyle{plain}
\newtheorem{theorem}{Theorem}[section]
\newtheorem{lemma}[theorem]{Lemma}
\newtheorem{proposition}[theorem]{Proposition}
\newtheorem{assumption}[theorem]{Assumption}
\theoremstyle{definition}
\theoremstyle{remark}
\newtheorem{remark}[theorem]{Remark}
\title{Gaussian Process Regression under Computational and Epistemic Misspecification}
\author{Daniel Sanz-Alonso and Ruiyi Yang} 
\date{University of Chicago and Princeton University}
\newcommand{\R}{\mathbb{R}}
\newcommand{\ssubset}{\subset\joinrel\subset}
\newcommand{\RKHS}{\mathcal{H}_\Phi}
\newcommand{\RKHSN}{\mathcal{H}_{\Phi_{\scaleto{N}{2.5pt}}}}
\newcommand{\RKHSpsi}{\mathcal{H}_\Psi}
\newcommand{\Inter}{\mathcal{I}_{\Phi,X_n}}
\newcommand{\InterN}{\mathcal{I}_{\Phi_N,X_n}}
\newcommand{\Interpsi}{\mathcal{I}_{\Psi,X_n}}
\newcommand{\h}{h_{n,\Omega}}
\newcommand{\LinfO}{L^{\infty}(\Omega)}
\newcommand{\LtwoO}{L^2(\Omega)}
\newcommand{\q}{q_{n}}
\newcommand{\B}{\mathbb{B}}
\newcommand{\D}{\mathcal{D}}
\newcommand{\NB}{\operatorname{NB}}
\newcommand{\so}{s_{\scaleto{0}{2.5pt}}}
\newcommand{\sN}{s_{\scaleto{N}{2.5pt}}}
\newcommand{\nugg}{\lambda}
\newcommand{\smin}{\sigma_{\operatorname{min}}}
\newcommand{\dinf}{L^{\infty}(X_n)}
\newcommand{\dtwo}{L^2(X_n)}
\newcommand{\C}{\mathcal{C}}
\newcommand{\I}{\mathcal{I}}
\newcommand{\Z}{\mathbb{Z}}
\begin{document}

\maketitle

\begin{abstract}
Gaussian process regression is a classical kernel method for function estimation and data interpolation. In large data applications, computational costs can be reduced using low-rank or sparse approximations of the kernel. This paper investigates the  effect of such kernel approximations on the interpolation error.
We introduce a unified framework to analyze Gaussian process regression under important classes of computational misspecification: Karhunen-Lo\`eve expansions that result in low-rank kernel approximations, multiscale wavelet expansions that induce sparsity in the covariance matrix, and finite element representations that induce sparsity in the precision matrix. Our theory also accounts for epistemic misspecification in the choice of kernel parameters. 
\end{abstract}

\section{Introduction}

Gaussian process (GP) regression, also known as \emph{kriging}, is an important tool in computational mathematics \cite{wendland2004scattered}, statistics \cite{stein1999interpolation}, and machine learning \cite{williams2006gaussian} 
with applications in geostatistics \cite{matheron1963principles}, computer experiments \cite{santner2003design}, inverse problems \cite{teckentrup2020convergence}, probabilistic numerics \cite{hennig2015probabilistic}, and Bayesian optimization \cite{srinivas2010gaussian}. 
Given the value of a function $f$ at some points, the goal is to infer the value of $f$ at unobserved locations. 
In GP regression, $f$ is modeled as a sample path from a GP with a user-chosen covariance kernel, so that its conditional mean interpolates the observed function values and provides an emulator for $f$. 
The accuracy of this emulator is sensitive to the choice of covariance kernel, which should reflect properties of $f$ such as its smoothness. 
However, these properties are often unknown or hard to impose on the kernel, motivating the study of GP regression under kernel misspecification.

We distinguish between two types of misspecification: epistemic and computational. 
\emph{Epistemic misspecification} refers to an inadequate choice of kernel model or kernel parameters caused
by lack of knowledge on the function $f.$
For example,
choosing the smoothness parameter in the Mat\'ern covariance kernel can be  challenging if the smoothness of $f$ is unknown \cite{zhang2004inconsistent}, and an inadequate choice of this parameter can hinder the convergence rate of GP regression
\cite{wang2020prediction,tuo2020kriging,wang2022gaussian,teckentrup2020convergence}.
Even when the choice of kernel dovetails with the function $f,$ \emph{computational misspecification} arises if the kernel needs to be approximated for computational reasons.
For instance, kernels defined as infinite series must be truncated, leading to a low-rank approximation. 
Relatedly, sparse kernel approximations are essential to tackle large data applications where the cubic cost of inverting a dense covariance matrix becomes prohibitive.
In this direction,  \cite{lindgren2011explicit} 
constructs approximate Matérn GPs with sparse precision matrices
by using finite elements to represent the solution of a stochastic partial differential equation (SPDE). 
Other low-rank and sparsity-promoting techniques include covariance tapering \cite{furrer2006covariance}, wavelet approximations \cite{bolin2013comparison,owhadi2019operator}, Fourier methods \cite{greengard2022equispaced}, hierarchical matrices \cite{geoga2020scalable}, and extensions of the SPDE approach to graphs \cite{sanz2022spde}. 
Despite the computational speedup offered by these techniques, they only provide an approximation to the kernels used for modeling. 
How such computational misspecification affects GP regression is yet not fully understood.

This paper presents a unified analysis of GP regression that simultaneously addresses epistemic and computational misspecification. We study the approximation error of a function $f$ by its GP interpolant defined using  misspecified kernels. In addition to the standard interpolation error incurred in the well-specified case, there is an approximation error in the misspecified setting that reflects the distance between $f$ and the reproducing kernel Hilbert space (RKHS) of the misspecified kernel. Leveraging results from approximation theory and scattered data approximation, we illustrate our unified framework in three important classes of GP methods: Karhunen-Lo\`eve expansions that result in low-rank kernel approximations, multiscale wavelet expansions that induce sparsity in the covariance matrix, and finite element representations that induce sparsity in the precision matrix. Our analysis provides new guidelines for the choice of the complexity parameter (e.g. truncation level or finite element mesh size) relative to the number of observations. In addition, our framework recovers existing theory for epistemic misspecification of the smoothness parameter in Mat\'ern kernels. 

\vspace{-1pt}
\subsection{Comparison to related work}
The study of GP regression under epistemic misspecification originates in the spatial statistics literature \cite{stein1988asymptotically,stein1993simple} with a focus on asymptotically efficient estimation. In this spirit, \cite{putter2001effect} analyzes a sequence of misspecified kernels, as we will do in this paper. More recent works derive the convergence rate of GP interpolants towards the truth as the number of observations increases.
The line of work  \cite{wang2020prediction,tuo2020kriging,wang2022gaussian} establishes interpolation error bounds with Mat\'ern-type kernels when the smoothness parameter is misspecified. The paper
\cite{teckentrup2020convergence} extends the analysis to a sequence of estimated smoothness parameters, while \cite{wynne2021convergence} studies GP regression under noise corruption and likelihood misspecification.

Computational misspecification has received less attention. Multiscale kernels were analyzed in \cite{opfer2006tight,griebel2015multiscale} in the absence of epistemic misspecification. A Bayesian nonparametrics analysis of the effect of finite element representations on the posterior contraction rate can be found in \cite{sanz2022finite}. However, we are not aware of existing interpolation error analyses of GP regression with truncated Karhunen-Lo\`eve expansions or finite element representations. In contrast to \cite{opfer2006tight,griebel2015multiscale,sanz2022finite,teckentrup2020convergence,wang2020prediction,tuo2020kriging,wang2022gaussian,wynne2021convergence}, our paper simultaneously addresses epistemic and computational misspecification, and further covers a wider range of kernels encountered in practice. 
We focus on deterministic kernel approximations; for random feature models,
convergence rates in terms of the number of features and the number of observations were established in \cite{minh2016operator,lanthaler2023error,hashemi2023generalization}.

\vspace{-1pt}
\subsection{Outline}
Section \ref{sec:background} summarizes the necessary background on GP regression and formalizes the problem setting. Section \ref{sec:framework} introduces a general framework to analyze GP regression under computational and epistemic misspecification. Adopting this framework, we analyze several important classes of GP methods in Section \ref{sec:examples}. Section \ref{sec:conclusions} closes. Standard proofs and auxiliary results are deferred to the Appendices.

\vspace{-1pt}
\subsection{Notation}
For real numbers $a,b$, we denote $a\wedge b=\text{min}(a,b)$ and $a\vee b=\text{max}(a,b)$. 
The symbol $\lesssim$ will denote less than or equal
to up to a universal constant and similarly for $\gtrsim$. For real sequences $\{a_n\},\{b_n\}$, we 
write $a_n\asymp b_n$ if $a_n\lesssim b_n$ and $b_n\lesssim a_n$ for all $n$.
For  sets $E_1, E_2 \subset \R^d,$ we write $E_1 \ssubset E_2$ if $E_1$ is compactly contained in $E_2.$

\section{Preliminaries}\label{sec:background}

\subsection{Problem formulation}
Let $\D \subset \R^d$ and suppose we can observe the values of a deterministic function $f: \D \to \R$ at design points $X_n:=\{x_1, \ldots, x_n \} \subset \Omega$ on an experimental region $\Omega \subset \D.$ Given data
$Y := \bigl(f(x_1),\ldots,f(x_n)\bigr)^T$ 
and a positive definite covariance function (or kernel) $\Psi: \D \times \D \to \R,$
GP regression estimates the value of $f$ at unobserved locations and quantifies the uncertainty in these predictions \cite{williams2006gaussian}.
Specifically, setting a GP prior $\mathcal{GP}(0, \Psi)$ on $f,$ inference at $x \in \D$ is based on the posterior mean and posterior predictive variance, given by
\vspace{-3pt}
\begin{align}
    \Interpsi  f(x) &:=k_{\Psi}(x)^T K_{\Psi}^{-1}Y, \label{eq:kriging estimator} \\
    P_{\Psi,X_n}(x) &:= \Psi(x,x) - k_{\Psi}(x)^T K_{\Psi}^{-1} k_{\Psi}(x), \label{eq:variance}
\end{align}
where $k_{\Psi}(x) := \bigl(\Psi(x,x_1),\ldots,\Psi(x,x_n)\bigr)^T$ and the kernel matrix $K_{\Psi}:=[\Psi(x_i,x_j)]_{ij}$ will be assumed to be invertible. Notice that $\Interpsi$ interpolates the given data, that is,  $\Interpsi f(x_i) = f(x_i)$ for $x_i \in X_n.$ For this reason, $\Interpsi f$ is called the \emph{kriging interpolant}. Moreover, it follows from \eqref{eq:variance} that $P_{\Psi,X_n}(x_i)=0$ at observed locations. Finally, notice that $\Interpsi f$ is linear in the observations $Y;$ in fact, $\Interpsi f$ is the \emph{best linear predictor} in the sense of minimizing mean squared error \cite{stein1999interpolation}. 

When the kernel matrix $K_\Psi$ is ill-conditioned, it is standard practice to include a \emph{nugget term}, $\lambda I$, and consider 
\vspace{-3pt}
\begin{align}
    \Interpsi^\nugg f(x) &:=k_{\Psi}(x)^T (K_{\Psi}+\nugg I)^{-1}Y, \label{eq:kriging estimatorwithnugget} \\
    P_{\Psi,X_n}^\nugg(x) &:= \Psi(x,x) - k_{\Psi}(x)^T (K_{\Psi}+\nugg I)^{-1} k_{\Psi}(x), \label{eq:variancewithnugget}
\end{align}
where $\lambda \ge 0$ is a small tunable parameter. Notice that $\lambda = 0$ recovers the posterior mean and variance in \eqref{eq:kriging estimator} and \eqref{eq:variance}. The nugget term acts as a regularization and alleviates the computational instability arising when inverting an ill-conditioned kernel matrix. For convenience, we will still refer to $\Interpsi^\nugg  f$ as a kriging interpolant, although in general $\Interpsi^\nugg  f$ no longer interpolates the data when $\lambda >0$. 

This paper studies how the choice of kernel affects the error of kriging interpolants. 
We consider a misspecified setting where $f$, as a deterministic function, belongs to a certain function space such as a Sobolev space or the RKHS $\RKHS$ of a ground-truth kernel $\Phi : \D \times \D \to \R,$ but the user computes the kriging interpolant $\InterN^\lambda f$ with a kernel $\Phi_N : \D \times \D \to \R.$  
Here, $N$ may index ---as in \cite{teckentrup2020convergence}--- a sequence of plug-in parameter estimates under epistemic misspecification, or it may represent a complexity parameter controlling the cost of a kernel approximation under computational misspecification. 
In Section \ref{sec:framework} we introduce a unified framework to obtain error bounds for kriging interpolants.

The main result, Theorem \ref{thm:misspecifed kriging error}, has two parts. 
The first part outlines an existing approach to bound  $\|\InterN^\nugg f - f \|_{L^2(\Omega)}$ and $\|\InterN^\nugg f - f \|_{L^\infty(\Omega)}$ in terms of the number of observations $n$ and the complexity parameter $N,$ whereas the second part introduces a novel approach to obtain such bounds by leveraging a new stability analysis (Lemma \ref{lemma:stability of interpolation}) of the interpolation operator $\InterN^\lambda$.
Then, in Section \ref{sec:examples} we illustrate the application of this general framework in several important examples, as outlined in Table \ref{table:summary}.

\begin{table}[h]
    \centering
    \begin{tabular}{|c|c|c|c|c|c|}
    \hline
          Setting  & Misspecification & $N $& Section  &Part \\  \hline \hline
          Matérn kernel & Epistemic   & Plug-in estimate  & \ref{sec:ex:para miss}  & I \\ 
          Karhunen-Lo\`eve & Epist. \& comp.  &Truncation level & \ref{sec:KL} & I and  II \\
          Wavelet multiscale kernel &  Epist. \& comp. & Resolution level& \ref{sec:wavelets} &II \\
          Finite element kernel & Epist. \& comp.  & Mesh size & \ref{sec:FEM} & II \\ \hline
    \end{tabular}
    \caption{Summary of the examples studied in Section \ref{sec:examples}, along with the interpretation of the index $N$ and the part of Theorem \ref{thm:misspecifed kriging error} used in the analysis.}
    \label{table:summary}
\end{table}

Before we delve into the new theory, we introduce some assumptions on the experimental region and the design points.

\subsection{Experimental region and design points}
Throughout this paper, we adopt the following assumption on the domain $\D$ and the experimental region $\Omega$:
\begin{assumption}\label{assp:Omega}
$\Omega\subset \D \subset \mathbb{R}^d$ are two domains with Lipschitz boundary. In addition, $\Omega$ is bounded and satisfies an interior cone condition.
\end{assumption}
We consider the design points $X_n = \{x_1, \ldots, x_n\} \subset \Omega$ to be fixed. 
The error of kriging interpolants depends on how well the design points cover the experimental region $\Omega.$ Intuitively, we would like the design points to be uniformly spread on $\Omega$. 
To formalize this intuition, we introduce three quantities that will appear in our bounds: the \emph{fill distance} $\h,$  the \emph{separation radius} $\q,$ and the \emph{mesh ratio} $\rho_{n,\Omega}$ defined by  
\begin{align}\label{eq:h q rho}
    \h := \underset{x\in\Omega}{\operatorname{sup}} \,\operatorname{dist}(x,X_n),\qquad \q :=\underset{x\neq z\in X_n}{\operatorname{min}}\,\, \frac12|x-z|,\qquad \rho_{n,\Omega} := \frac{\h}{\q}.  
\end{align}
The fill distance, also known as maximin distance \cite{johnson1990minimax} or dispersion \cite{niederreiter1992random}, is the maximum distance that any point $x\in \Omega$ can be from its closest design point. The separation radius is half the smallest distance between any two distinct design points. The mesh ratio quantifies whether the design points are close to a uniform grid.

Fill distance and separation radius are decreasing functions of $n,$ and tend to zero in the large $n$ limit for space-filling designs. 
For any space-filling design, the fill distance tends to zero at rate at most $h_n \le c n^{-1/d},$ see \cite{niederreiter1992random}. In contrast, the mesh ratio $\rho_{n,\Omega}$ is bounded below by $1$ and is an increasing function of $n.$ Sequences of design points $X_n$ for which $\rho_{n,\Omega}$ is uniformly bounded in $n$ are called \emph{quasi-uniform}. For quasi-uniform points, it holds that $\h \asymp \q \asymp n^{-1/d}.$

\section{Unified theoretical framework}\label{sec:framework}

Our main result in this section, Theorem \ref{thm:misspecifed kriging error}, establishes error bounds for approximating a deterministic function $f$ by its kriging interpolants defined with a sequence of kernels $\Phi_N$ whose RKHSs do not necessarily contain $f$. 
The key idea is to introduce an approximate function $f_N\in \RKHSN$ and reduce the original kriging error to more tractable ones. 
In Section \ref{sec:examples}, our analysis of computational and epistemic misspecification builds on this unified framework and we will leverage results from approximation theory to find the appropriate $f_N$'s.

\subsection{Main Result}
For $s>\frac{d}{2}$, we recall that the Sobolev space $H^s(\R^d)$ is defined as 
\begin{align*}
    H^s(\R^d)=\Big\{f\in L^2(\R^d): \|f\|^2_{H^s(\R^d)}=\int_{\R^d} (1+|\xi|^2)^s|\widehat{f}(\xi)|^2d\xi <\infty\Big\},
\end{align*}
where $\widehat{f}$ denotes the Fourier transform of $f$. For $\D\subset \R^d$, we define 
\begin{align*}
    H^s(\D) = \Big\{f\in L^2(\D): \exists\, F\in H^s(\R^d) \text{ such that } F|_\D= f \Big\}
\end{align*}
with norm
\begin{align*}
    \|f\|_{H^s(\D)} = \operatorname{inf}\Big\{\|F\|_{H^s(\R^d)}:F\in H^s(\R^d) \text{ and } F|_\D= f\Big\}.
\end{align*}

\begin{theorem}\label{thm:misspecifed kriging error}
    Let $\Phi_N:\D\times \D\rightarrow \R$ be a covariance function whose RKHS satisfies  $\RKHSN  \subset H^{\sN} (\D)$ for $s_N>\frac{d}{2}$ with equivalent norms: there exists $A_N>1$ such that $A_N^{-1}\|g\|_{\RKHSN }\leq \|g\|_{H^{\sN} (\D)} \leq A_N \|g\|_{\RKHSN }$ for any $g\in \RKHSN $.
    If $K_{\Phi_N}\in \mathbb{R}^{n\times n}$ is invertible, then the following holds: 

\vspace{0.25cm}
{\sc Part I.} Let $\Phi:\D\times\D\rightarrow\R$ be a positive definite covariance function whose RKHS satisfies $\RKHS \subset H^{\so}(\D)$ for $s_0>\frac{d}{2}$ with equivalent norms.
Let $f\in\RKHS$. Suppose that $\RKHSN \subset \RKHS$ and that there exists $f_N\in \RKHSN$ with $f_N|_{X_n}\equiv f|_{X_n}$. Then, there exists $h_{\lfloor \so\rfloor,\lfloor \sN\rfloor,d,\Omega}$ such that if $\h\leq h_{\lfloor \so\rfloor,\lfloor \sN\rfloor,d,\Omega}$ 
\begin{align*}
    \|f - \InterN^\nugg f\|_{\LtwoO} &\leq C_{\so,\sN,d,\Omega}A_N^2 \Big[\big(\h^{\so} +\h^{\frac{d}{2}}\sqrt{\nugg}\big)\big(\|f\|_{H^{\so}(\D)}+\|f_N\|_{H^{\so}(\D)}\big)\\
    &\qquad \qquad + \big(\h^{\sN}+\h^{\frac{d}{2}}\sqrt{\nugg}\big)\|f_N\|_{H^{\sN}(\D)}\Big],\\
    \|f - \InterN^\nugg f\|_{\LinfO} &\leq C_{\so,\sN,d,\Omega}A_N^2\Big[ \big(\h^{\so-\frac{d}{2}}+\sqrt{\nugg}\big) \big(\|f\|_{H^{\so}(\D)}+\|f_N\|_{H^{\so}(\D)}\big)\\&\qquad\qquad + \big(\h^{\sN-\frac{d}{2}}+\sqrt{\nugg}\big)\|f_N\|_{H^{\sN}(\D)}\Big].
\end{align*} 
If $\nugg=0$, the constant $C_{\so,\sN,d,\Omega}$ can be improved to $C_{\lfloor\so\rfloor,\lfloor\sN\rfloor,d,\Omega}$.

\vspace{0.25cm}
{\sc Part II.} 
Let $f\in L^\infty(\D)$ with well-defined pointwise values. There exists $h_{\lfloor \sN\rfloor,d,\Omega}$ such that if $\h\leq h_{\lfloor \sN\rfloor,d,\Omega}$, then, for any $f_N\in \mathcal{H}_{\Phi_N}$, 
\begin{align*}
    \|f-\InterN^\nugg f\|_{L^2(\Omega)}&\leq C_{\sN,d,\Omega}A_N\sqrt{n}\bigg[\frac{\h^{\sN}}{\sqrt{\smin(K_{\Phi_N})+\nugg}}+\h^{\frac{d}{2}}\bigg] \|f_N-f\|_{L^{\infty}(\Omega)}\\  &\qquad\qquad  +C_{\sN,d,\Omega}A_N^2\big(\h^{\sN}+\h^{\frac{d}{2}}\sqrt{\nugg}\big)\|f_N\|_{H^{\sN}(\D)},\\
     \|f-\InterN^\nugg f\|_{L^{\infty}(\Omega)}&\leq C_{\sN,d,\Omega}A_N\sqrt{n}\bigg[\frac{\h^{\sN-\frac{d}{2}}}{\sqrt{\smin(K_{\Phi_N})+\lambda}}+1\bigg]\|f_N-f\|_{L^{\infty}(\Omega)}\\
    &\qquad \qquad + C_{\sN,d,\Omega}A_N^2\big(\h^{\sN-\frac{d}{2}}+\sqrt{\nugg}\big) \|f_N\|_{H^{\sN}(\D)}.
\end{align*}
If $\nugg=0$, the constant $C_{\sN,d,\Omega}$ can be improved to $C_{\lfloor\sN\rfloor,d,\Omega}$.

\end{theorem}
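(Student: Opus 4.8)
The plan is to prove both parts by the same three-step strategy: replace the target $f$ by the surrogate $f_N$ and split the error with the triangle inequality; estimate each resulting piece by a \emph{sampling inequality} (a ``zeros lemma'') on $\Omega$; and control the Sobolev norm and the residual at the design points of the regularized interpolant, either through its variational characterization (Part~I) or through the stability estimate of Lemma~\ref{lemma:stability of interpolation} (Part~II), passing between the $\RKHSN$ and $H^{\sN}(\D)$ norms with the equivalence constant $A_N$. The analytic workhorse is the sampling inequality (see, e.g., \cite{wendland2004scattered}): there is a threshold depending only on $\lfloor s\rfloor$, $d$ and $\Omega$ such that, once $\h$ is below it, every $g\in H^s(\Omega)$ satisfies
\begin{align*}
    \|g\|_{\LtwoO}\ &\lesssim\ \h^{\,s}\,\|g\|_{H^s(\Omega)}\ +\ \h^{\,d/2}\,\|g|_{X_n}\|_{\ell^2},\\
    \|g\|_{\LinfO}\ &\lesssim\ \h^{\,s-d/2}\,\|g\|_{H^s(\Omega)}\ +\ \|g|_{X_n}\|_{\ell^\infty},
\end{align*}
and the defect terms $\h^{d/2}\|g|_{X_n}\|_{\ell^2}$ and $\|g|_{X_n}\|_{\ell^\infty}$ are absent whenever $g$ vanishes on $X_n$; when $\nugg=0$ only this defect-free version is needed, which is available for integer smoothness and accounts for the sharper constants $C_{\lfloor\so\rfloor,\lfloor\sN\rfloor,d,\Omega}$ and $C_{\lfloor\sN\rfloor,d,\Omega}$ claimed in that case.

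For \textsc{Part~I} I would first note that $\InterN^\nugg$ depends on $f$ only through the data vector $f|_{X_n}=f_N|_{X_n}$, so $\InterN^\nugg f=\InterN^\nugg f_N=:\widehat f_N$, and by the representer theorem $\widehat f_N=\operatorname{argmin}_{g}\big\{\|g|_{X_n}-f_N|_{X_n}\|_{\ell^2}^2+\nugg\|g\|_{\RKHSN}^2\big\}$. Comparing the minimum with the value at the competitor $g=f_N\in\RKHSN$ gives $\|\widehat f_N\|_{\RKHSN}\le\|f_N\|_{\RKHSN}$ and $\|(\widehat f_N-f_N)|_{X_n}\|_{\ell^2}\le\sqrt\nugg\,\|f_N\|_{\RKHSN}$, both trivial when $\nugg=0$. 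Decompose $f-\widehat f_N=(f-f_N)+(f_N-\widehat f_N)$. Since $f\in\RKHS\subset H^{\so}(\D)$ and $f_N\in\RKHSN\subset\RKHS\subset H^{\so}(\D)$, the function $f-f_N$ belongs to $H^{\so}(\D)$ and vanishes on $X_n$, so the defect-free sampling inequality at smoothness $\so$ bounds it by $\h^{\so}\big(\|f\|_{H^{\so}(\D)}+\|f_N\|_{H^{\so}(\D)}\big)$ in $\LtwoO$ and by $\h^{\so-d/2}\big(\|f\|_{H^{\so}(\D)}+\|f_N\|_{H^{\so}(\D)}\big)$ in $\LinfO$. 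The function $f_N-\widehat f_N$ belongs to $H^{\sN}(\D)$; applying the sampling inequality at smoothness $\sN$, with $\|\widehat f_N\|_{H^{\sN}(\D)}\le A_N\|\widehat f_N\|_{\RKHSN}\le A_N\|f_N\|_{\RKHSN}\le A_N^2\|f_N\|_{H^{\sN}(\D)}$ for the Sobolev part and $\|(\widehat f_N-f_N)|_{X_n}\|_{\ell^2}\le\sqrt\nugg\,A_N\|f_N\|_{H^{\sN}(\D)}$ for the defect part, yields the $\big(\h^{\sN}+\h^{d/2}\sqrt\nugg\big)\|f_N\|_{H^{\sN}(\D)}$ contribution. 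The extra $\h^{d/2}\sqrt\nugg$ factor multiplying the $H^{\so}$ norms appears if one instead applies the \emph{defect} sampling inequality at smoothness $\so$ to $f-\widehat f_N$ directly, using $\RKHSN\subset\RKHS$ to bound $\|\widehat f_N\|_{H^{\so}(\D)}$; this only enlarges the estimate and keeps the statement valid for either ordering of $\so$ and $\sN$. Summation gives the two displayed bounds.

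For \textsc{Part~II}, $f_N$ no longer reproduces the data, and I would use linearity of $\InterN^\nugg$ in the data to write
\[
    f-\InterN^\nugg f\ =\ (f-f_N)\ +\ \big(f_N-\InterN^\nugg f_N\big)\ +\ \InterN^\nugg(f_N-f).
\]
The first term is bounded by $|\Omega|^{1/2}\|f_N-f\|_{\LinfO}$ in $\LtwoO$ and by $\|f_N-f\|_{\LinfO}$ in $\LinfO$, both absorbed into the $\sqrt n\,\h^{d/2}$ (resp.\ $\sqrt n$) prefactor of the statement because $\sqrt n\,\h^{d/2}\gtrsim 1$. The second term is exactly the well-specified regularized kriging error of $f_N\in\RKHSN$ estimated in Part~I, producing the $A_N^2\big(\h^{\sN}+\h^{d/2}\sqrt\nugg\big)\|f_N\|_{H^{\sN}(\D)}$ term. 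For the third term, $\InterN^\nugg(f_N-f)\in\RKHSN\subset H^{\sN}(\D)$, and the key input is the stability estimate of Lemma~\ref{lemma:stability of interpolation}: from $\InterN^\nugg(f_N-f)=k_{\Phi_N}(\cdot)^{T}(K_{\Phi_N}+\nugg I)^{-1}(f_N-f)|_{X_n}$ and $K_{\Phi_N}\preceq K_{\Phi_N}+\nugg I$,
\[
    \big\|\InterN^\nugg(f_N-f)\big\|_{\RKHSN}\ \le\ \frac{\|(f_N-f)|_{X_n}\|_{\ell^2}}{\sqrt{\smin(K_{\Phi_N})+\nugg}}\ \le\ \frac{\sqrt n\,\|f_N-f\|_{\LinfO}}{\sqrt{\smin(K_{\Phi_N})+\nugg}},
\]
while $\big\|\InterN^\nugg(f_N-f)|_{X_n}\big\|_{\ell^2}=\big\|K_{\Phi_N}(K_{\Phi_N}+\nugg I)^{-1}(f_N-f)|_{X_n}\big\|_{\ell^2}\le\sqrt n\,\|f_N-f\|_{\LinfO}$ since $K_{\Phi_N}(K_{\Phi_N}+\nugg I)^{-1}$ has spectral norm at most $1$. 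Substituting these into the sampling inequality at smoothness $\sN$ (and converting $\RKHSN$ to $H^{\sN}(\D)$ with $A_N$) gives the $A_N\sqrt n\big[\h^{\sN}/\sqrt{\smin(K_{\Phi_N})+\nugg}+\h^{d/2}\big]\|f_N-f\|_{\LinfO}$ term in $\LtwoO$ and its $\LinfO$ analogue; adding the three contributions gives the claimed bounds.

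The step I expect to be hardest is the stability estimate itself (Lemma~\ref{lemma:stability of interpolation}): controlling $\|\InterN^\nugg v\|_{\RKHSN}$ for \emph{arbitrary} $v$ by $\|v|_{X_n}\|_{\ell^2}\big/\sqrt{\smin(K_{\Phi_N})+\nugg}$, together with whatever companion bounds are needed to invoke the sampling inequalities uniformly down to the smallest admissible $\h$. A secondary, mostly bookkeeping, difficulty is tracking the smoothness indices $\so,\sN$ and their floors so that the constants and the threshold on $\h$ come out precisely as stated, including the improvement when $\nugg=0$.
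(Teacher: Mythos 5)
Your proposal is correct, and your Part II is essentially the paper's argument: the same three-term decomposition $\|f-f_N\|+\|f_N-\InterN^\nugg f_N\|+\|\InterN^\nugg(f_N-f)\|$, with the last term controlled by exactly the stability estimate of Lemma \ref{lemma:stability of interpolation} (which you in effect re-derive from $(K_{\Phi_N}+\nugg I)^{-1}K_{\Phi_N}(K_{\Phi_N}+\nugg I)^{-1}\preceq(K_{\Phi_N}+\nugg I)^{-1}$ and the spectral bound $\|K_{\Phi_N}(K_{\Phi_N}+\nugg I)^{-1}\|\le 1$, matching the paper's Lemma \ref{lemma:approx inter error on Xn}).

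Part I, however, takes a genuinely different route. The paper writes $\|f-\InterN^\nugg f\|\le\|f-\Inter^\nugg f\|+\|\Inter^\nugg f_N-f_N\|+\|f_N-\InterN^\nugg f_N\|$, using $\Inter^\nugg f=\Inter^\nugg f_N$, so that every piece is a \emph{well-specified} regularized interpolation error (two with the true kernel $\Phi$, one with $\Phi_N$), each handled by Lemma \ref{lemma:approx interpolation error}; this is what produces the $\h^{\frac{d}{2}}\sqrt{\nugg}$ factor on the $H^{\so}$ norms. You instead use the two-term split $f-\InterN^\nugg f=(f-f_N)+(f_N-\InterN^\nugg f_N)$ and hit $f-f_N$, which vanishes on $X_n$, directly with the zero-defect sampling inequality of Lemma \ref{lemma:sampling theorem}. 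Your route is shorter, never invokes the interpolation operator $\Inter^\nugg$ associated with $\Phi$ (so it does not need $K_\Phi$ invertible, only $f,f_N\in H^{\so}(\D)$), and yields a slightly sharper first term, $\h^{\so}\bigl(\|f\|_{H^{\so}(\D)}+\|f_N\|_{H^{\so}(\D)}\bigr)$ with a constant depending only on $\lfloor\so\rfloor$ even when $\nugg>0$; the paper's statement follows a fortiori. The paper's version buys uniformity of presentation --- all three pieces are instances of the single Lemma \ref{lemma:approx interpolation error} --- at the price of the extra $\h^{\frac{d}{2}}\sqrt{\nugg}$ term. Two cosmetic remarks: the $L^\infty$ sampling inequality in Lemma \ref{lemma:sampling theorem} carries the discrete $\ell^2$ (not $\ell^\infty$) defect term, which is all you need anyway; and the zero-defect constant $C_{\lfloor s\rfloor,d,\Omega}$ is available for all real $s>\frac{d}{2}$ (it merely depends only on $\lfloor s\rfloor$), not just for integer smoothness.
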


Before proving Theorem \ref{thm:misspecifed kriging error}, we make some remarks. 

\begin{remark}
Due to misspecification, the function $f$ does not necessarily belong to the RKHS $\RKHSN$ of the kriging kernel so that standard interpolation bounds do not apply. 
As mentioned, we then introduce an approximate function $f_N\in \RKHSN$ and decompose the error into terms that we can control. 
In part I, we rely on an approximation $f_N$ that interpolates $f$ over the design points $X_n$. 
Since the kriging interpolant $\Interpsi^\nugg f$ defined in \eqref{eq:kriging estimatorwithnugget} depends only on $f|_{X_n}$, we can rewrite $\Interpsi^\nugg f$ as $\Interpsi^\nugg f_N$ for both $\Psi=\Phi$ and $\Psi = \Phi_N,$ and then employ standard interpolation results. 
Such an idea has already been used to deal with the setting where the truth $f$ is rougher than functions in $\RKHSN$ \cite{narcowich2002scattered,narcowich2004scattered,narcowich2006sobolev,griebel2015multiscale,tuo2020kriging} (a.k.a. escaping from the native space).
Our part I framework can then be adopted to recover the existing results as we shall demonstrate through an example in Subsection \ref{sec:ex:para miss}.

However, finding such an interpolant is not always possible,  especially in the presence of computational misspecification.  In part II, we relax this requirement by allowing $f_N$ to be any function in $\RKHSN$ that approximates $f$ well in $L^\infty(\Omega)$ norm, leading to a different error decomposition. 
The resulting bound consists of two terms representing (i) how well functions in $\RKHSN$ can approximate $f$ and (ii) the standard error of approximate interpolation for $f_N$. 
By leveraging results from approximation theory, we can get a good control on the first error and as a result optimal error rates for approximating $f$ in certain cases, as we show in Section \ref{sec:examples}. 
$\hfill \square$
\end{remark}

\begin{remark}
The $N$-dependent constants $C_{\sN,d,\Omega}$ and $A_N$, which come respectively from the \emph{sampling theorem} (see Lemma \ref{lemma:sampling theorem} below) and the norm equivalence condition, need to be controlled for obtaining rates of convergence. 
For $C_{\sN,d,\Omega}$, this can be done by restricting the range of $s_N$. In particular, when $\nugg=0$ where the constant improves to $C_{\lfloor\sN\rfloor,d,\Omega}$, it suffices to require the $s_N$'s to stay in a bounded interval since only their integer parts affect the constant.

For $A_N$, similar restrictions on the kernel parameters can be imposed to deal with epistemic misspecification, as we demonstrate in Subsection \ref{sec:ex:para miss}. For computational misspecification where $N$ acts as a truncation level, we show by examples in Subsections \ref{sec:KL}, \ref{sec:wavelets}, and \ref{sec:FEM} that $A_N$ can also be uniformly controlled. 
$\hfill \square$
\end{remark}

\begin{remark}
The introduction of a nugget term can be seen as another layer of computational misspecification, which can help both the computation and the analysis. As pointed out in \cite{wendland2005approximate}, 
the nugget term regularizes the kriging estimator without affecting the error rate, as can be seen by setting $\nugg$ appropriately in Theorem \ref{thm:misspecifed kriging error}. 
Especially in our part II analysis, we can sidestep bounding the smallest eigenvalue $\smin(K_{\Phi_N})$ of the kernel matrix, which is itself a challenging task, and still obtain the same rate by setting $\nugg\asymp \h^{\sN-d/2}$. $\hfill \square$
\end{remark}

\begin{remark}
Lastly, Theorem \ref{thm:misspecifed kriging error} implies that the convergence order will be dictated by the smaller of the regularity of the truth $f$ and that of the RKHS $\RKHSN$, where in part II the regularity of $f$ comes in the approximation error $\|f_N-f\|_{L^\infty(\Omega)}$. 
When $f$ is rougher than functions in $\RKHSN$, this is in general optimal (see e.g. \cite[Table 1]{tuo2020kriging}). 
When $f$ is smoother, improved convergence rates may be obtained under stronger assumptions following ideas from \cite{schaback1999improved,sloan2023doubling}, 
for instance by requiring that the approximate function $f_N$ belongs to a certain ``nice'' subspace of $\RKHSN$.
This requirement would impose structural restrictions on $\RKHSN$ that may not be met under computational misspecification. We hence do not pursue further the investigation of improved convergence rates for smooth $f$ in this paper. 
$\hfill \square$
\end{remark}

\subsection{Proof of the Main Result}\label{sec:proof of main result}
The rest of this section contains the proof of Theorem \ref{thm:misspecifed kriging error}, which relies on Lemmas \ref{lemma:approx interpolation error} and \ref{lemma:stability of interpolation}. 
The proofs of these two lemmas make use of standard results in scattered data approximation that we review in Lemmas \ref{lemma:sampling theorem}, \ref{lemma:regularized LS}, and \ref{lemma:approx inter error on Xn}, whose proofs are included for completeness in Appendix \ref{sec:Appendix1}.

Recall that $\Omega\subset\D$ are two domains as in Assumption \ref{assp:Omega}.   
For function $f\in L^\infty(\D)$ with well-defined pointwise values, we denote
\begin{align*}
    \|f\|_{\dinf}:=\underset{i=1,\ldots,n}{\operatorname{max}} \,\, |f(x_i)|, \qquad  \|f\|_{\dtwo}:=\Big(\sum_{i=1}^n |f(x_i)|^2\Big)^{1/2}.
\end{align*}
The first lemma is a \emph{sampling theorem}, which controls the $L^2(\Omega)$ and $L^{\infty}(\Omega)$ norms of a function $u$ by its Sobolev norm and its $L^2(X_n)$ norm.
\begin{lemma}[{{\cite[Theorem 3.1]{arcangeli2012extension} and \cite[Theorem 2.12]{narcowich2005sobolev}}}]
\label{lemma:sampling theorem}
Let $s>\frac{d}{2}$. There exist two positive constants $h_{\lfloor s \rfloor,d,\Omega}$ and $C_{s,d,\Omega}$ such that if $X_n$ is a discrete set satisfying $\h\leq h_{\lfloor s \rfloor,d,\Omega}$ and $u\in H^s(\Omega)$, then 
\begin{align*}
    \|u\|_{L^2(\Omega)}& \leq C_{s,d,\Omega}\left[\h^s\|u\|_{H^s(\Omega)}+\h^{\frac{d}{2}}\|u\|_{\dtwo}\right],\\
    \|u\|_{L^{\infty}(\Omega)}& \leq C_{s,d,\Omega} \left[\h^{s-\frac{d}{2}}\|u\|_{H^s(\Omega)}+\|u\|_{\dtwo}\right].
\end{align*}
If we further have $u|_{X_n}=0$, then the constant $C_{s,d,\Omega}$ can be chosen to only depend on $\lfloor s\rfloor$, $d,$ and $\Omega$. 
\end{lemma}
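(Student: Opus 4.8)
\textbf{Proof proposal for Lemma \ref{lemma:sampling theorem}.}

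The plan is to derive the sampling inequality by combining a local polynomial reproduction argument with a norming-set / Bernstein-type estimate, following the classical route of Narcowich--Ward--Wendland and Arcangéli--López de Silanes--Torrens. First I would invoke the interior cone condition on $\Omega$ (Assumption \ref{assp:Omega}) to cover $\Omega$ by finitely many star-shaped patches of diameter comparable to $\h$, on each of which a polynomial reproduction operator of degree $m = \lfloor s\rfloor - 1$ exists with norm bounded independently of the patch once $\h \le h_{\lfloor s\rfloor, d, \Omega}$; this is where the constant's dependence on $\lfloor s \rfloor$ rather than $s$ enters. The key quantitative input is a bound of the form: for any $u \in H^s(\Omega)$ there is a piecewise polynomial (or, better, a single function) $p$ with $\|u - p\|_{L^\infty(\Omega)} \lesssim \h^{s - d/2}\|u\|_{H^s(\Omega)}$ and, crucially, $p$ agrees with $u$ at the points of $X_n$ up to the same-order error, so that $\|p\|_{\ell^\infty(X_n)} \lesssim \|u\|_{\dinf} + \h^{s-d/2}\|u\|_{H^s(\Omega)}$.

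Next I would control $\|p\|_{L^\infty(\Omega)}$ using the fact that $X_n$ with fill distance $\h$ is a norming set for polynomials of degree $\le m$ on each patch: a Markov/Bernstein inequality gives $\|p\|_{L^\infty(\text{patch})} \lesssim \|p\|_{\ell^\infty(X_n \cap \text{patch})}$ with a constant depending only on $m$, $d$, and the cone parameters. Combining with the approximation bound for $u - p$ yields the $L^\infty$ estimate
\begin{align*}
    \|u\|_{L^\infty(\Omega)} \le \|u - p\|_{L^\infty(\Omega)} + \|p\|_{L^\infty(\Omega)} \lesssim \h^{s - d/2}\|u\|_{H^s(\Omega)} + \|u\|_{\dinf}.
\end{align*}
Since $\|u\|_{\dinf} \le \|u\|_{\dtwo}$ trivially, this gives the stated $L^\infty$ bound. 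For the $L^2$ estimate I would run the same decomposition patch by patch but use an $L^2$-type norming inequality (Bramble--Hilbert on each patch together with the quasi-uniform distribution of points inside it, or directly the $L^2$-version of the norming-set inequality), picking up the factor $\h^{d/2}$ from the patch volume when passing from the discrete $\ell^2$ norm $\|u\|_{\dtwo}$ to an integral, and $\h^s\|u\|_{H^s(\Omega)}$ from the local approximation error measured in $L^2$; summing the squared local contributions over the $O(\h^{-d})$ patches and using finite overlap recovers the global bound.

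The main obstacle I anticipate is making the constants genuinely uniform in $\h$ (not just existence for each fixed point set) and tracking that, when $u|_{X_n} = 0$, the dependence collapses to $\lfloor s \rfloor$: this requires that the polynomial reproduction degree and the overlap constant of the covering are controlled purely by the cone condition and $\lfloor s \rfloor$, and that the Sobolev-norm terms only ever enter through integer-order seminorms via a Bramble--Hilbert argument, with the fractional part of $s$ absorbed by interpolation only in the case $u|_{X_n}\neq 0$. Since this lemma is quoted verbatim from \cite{arcangeli2012extension} and \cite{narcowich2005sobolev}, in the paper itself I would simply cite those references; the sketch above records the argument one would reconstruct if a self-contained proof were desired.
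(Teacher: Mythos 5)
Your proposal matches the paper: the paper's proof of this lemma is simply a citation of \cite[Theorem 3.1]{arcangeli2012extension} (specialized to the parameters $(p,q,\varkappa)=(2,2,2)$ and $(2,\infty,2)$, with Remark 3.2 there explaining why the threshold $h_{\lfloor s\rfloor,d,\Omega}$ depends only on $\lfloor s\rfloor$) together with \cite[Theorem 2.12]{narcowich2005sobolev} for the improved constant when $u|_{X_n}=0$, which is exactly how you conclude. Your sketch of the underlying covering/norming-set/local-polynomial-reproduction argument is a faithful reconstruction of what those references do and contains no gap relevant to the paper's use of the lemma.
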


As previously noted, for $\lambda >0$ the approximate kriging interpolant $\Interpsi^\nugg f$ does not exactly interpolate the observed function values. The second lemma quantifies the resulting interpolation error for $f \in \RKHSpsi.$

\begin{lemma}[{{\cite[Proposition 3.1]{wendland2005approximate}}}]\label{lemma:regularized LS}
    Let $f\in \RKHSpsi$ with well-defined pointwise values. If $K_\Psi$ is invertible, then
    \begin{align*}
         \| f - \Interpsi^\nugg f \|_{\dtwo} &\leq \sqrt{\nugg}\|f\|_{\RKHSpsi},\\
        \|\Interpsi^\nugg f\|_{\RKHSpsi}& \leq \|f\|_{\RKHSpsi}.
    \end{align*}
\end{lemma}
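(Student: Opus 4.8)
The plan is to exploit the well-known variational (penalized least-squares) characterization of the nugget-regularized kriging interpolant. Specifically, I would first show that $\Interpsi^\nugg f$ is the unique minimizer over $g\in\RKHSpsi$ of the functional
\begin{align*}
    J(g):=\sum_{i=1}^n\bigl(g(x_i)-f(x_i)\bigr)^2+\nugg\|g\|_{\RKHSpsi}^2 = \|g-f\|_{\dtwo}^2+\nugg\|g\|_{\RKHSpsi}^2.
\end{align*}
This is the standard representer-theorem computation: writing $g=\sum_j c_j\Psi(\cdot,x_j)+g^\perp$ with $g^\perp$ orthogonal (in $\RKHSpsi$) to $\operatorname{span}\{\Psi(\cdot,x_j)\}$, one sees $g^\perp$ only increases the penalty without changing the data-fit term, so the minimizer lies in the span; substituting $g=\sum_j c_j\Psi(\cdot,x_j)$ reduces $J$ to the quadratic $\|K_\Psi c - Y\|_2^2 + \nugg\, c^T K_\Psi c$ in $c\in\R^n$, whose stationarity condition $(K_\Psi+\nugg I)K_\Psi c = K_\Psi Y$ gives (using invertibility of $K_\Psi$) the coefficient vector $c=(K_\Psi+\nugg I)^{-1}Y$, matching \eqref{eq:kriging estimatorwithnugget}.

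With this minimality in hand, both inequalities follow by comparison. For the second bound, simply compare the value at the minimizer with the value at the competitor $g=f\in\RKHSpsi$: since $J(\Interpsi^\nugg f)\le J(f)=\nugg\|f\|_{\RKHSpsi}^2$, and because $J(\Interpsi^\nugg f)\ge \nugg\|\Interpsi^\nugg f\|_{\RKHSpsi}^2$ (dropping the nonnegative data-fit term), we obtain $\|\Interpsi^\nugg f\|_{\RKHSpsi}\le\|f\|_{\RKHSpsi}$. For the first bound, use the same inequality $J(\Interpsi^\nugg f)\le \nugg\|f\|_{\RKHSpsi}^2$ but now drop the nonnegative penalty term $\nugg\|\Interpsi^\nugg f\|_{\RKHSpsi}^2$ instead, leaving $\|\Interpsi^\nugg f - f\|_{\dtwo}^2\le \nugg\|f\|_{\RKHSpsi}^2$, i.e. $\|f-\Interpsi^\nugg f\|_{\dtwo}\le\sqrt{\nugg}\,\|f\|_{\RKHSpsi}$.

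The only real subtlety — and the main thing to get right — is the representer/variational identification of $\Interpsi^\nugg f$ with the minimizer of $J$; everything after that is a two-line comparison argument. One should be slightly careful that $f$ having well-defined pointwise values and lying in $\RKHSpsi$ makes $J(f)$ finite and makes $f$ an admissible competitor, and that invertibility of $K_\Psi$ is what pins down $c$ uniquely. Since this variational fact is classical (it is exactly \cite[Proposition 3.1]{wendland2005approximate}, which the lemma cites), the cleanest exposition is to state it and then give the short comparison steps above; I would not reprove the representer theorem in full detail.
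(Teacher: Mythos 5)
Your proposal is correct and follows essentially the same route as the paper: identify $\Interpsi^\nugg f$ as the minimizer of the regularized least-squares functional $J$, then compare objective values and drop one nonnegative term at a time. The only cosmetic difference is your choice of competitor — you plug in $f$ itself (giving $J(f)=\nugg\|f\|_{\RKHSpsi}^2$ directly), whereas the paper routes through the minimum-norm interpolant $\Interpsi f$ and then uses $\|\Interpsi f\|_{\RKHSpsi}\le\|f\|_{\RKHSpsi}$; both yield the identical bound.
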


The third lemma is analogous to Lemma \ref{lemma:regularized LS}, but only assumes $g \in L^\infty(\D)$. This weaker assumption will be leveraged in  Lemma \ref{lemma:stability of interpolation} to establish a stability result.

\begin{lemma}\label{lemma:approx inter error on Xn}
Let $g\in L^\infty(\D)$ with well-defined pointwise values. If $\smin(K_{\Psi})+\nugg>0,$ then
\begin{align*}
    \|g - \Interpsi^{\nugg}g \|_{\dtwo}& \leq \frac{\nugg}{\smin(K_{\Psi})+\nugg} \|g\|_{\dtwo},\\
     \|\Interpsi^\nugg g\|_{\RKHSpsi} &\leq \frac{1}{\sqrt{\smin(K_{\Psi})+\nugg}}\|g\|_{\dtwo}.
\end{align*}
\end{lemma}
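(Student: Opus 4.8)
## Proof Plan for Lemma \ref{lemma:approx inter error on Xn}

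The plan is to work entirely on the discrete space $\dtwo \cong \R^n$ by identifying a function $g$ with its vector of values $G := (g(x_1),\ldots,g(x_n))^T$, so that $\|g\|_{\dtwo} = |G|$ in the Euclidean norm. The key observation is that the approximate kriging interpolant, restricted to the design points, acts linearly on $G$: from \eqref{eq:kriging estimatorwithnugget} we have $(\Interpsi^\nugg g)|_{X_n} = K_\Psi (K_\Psi + \nugg I)^{-1} G$. Hence the discrete residual satisfies
\begin{align*}
    (g - \Interpsi^\nugg g)|_{X_n} = \bigl(I - K_\Psi(K_\Psi+\nugg I)^{-1}\bigr)G = \nugg (K_\Psi + \nugg I)^{-1} G.
\end{align*}
The first inequality then reduces to bounding the operator norm $\|\nugg(K_\Psi+\nugg I)^{-1}\|_{2\to 2}$. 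Since $K_\Psi$ is symmetric positive semidefinite (a kernel matrix) with eigenvalues $\mu_i \ge \smin(K_\Psi)$, the eigenvalues of $\nugg(K_\Psi+\nugg I)^{-1}$ are $\nugg/(\mu_i + \nugg) \le \nugg/(\smin(K_\Psi) + \nugg)$, which gives the claimed bound. Note this requires $\smin(K_\Psi) + \nugg > 0$ for the inverse to exist, which is exactly the hypothesis.

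For the second inequality, I would recall the standard variational/representer characterization: the function $u := \Interpsi^\nugg g$ is the minimizer over $\RKHSpsi$ of $\sum_{i=1}^n (v(x_i) - g(x_i))^2 + \nugg \|v\|_{\RKHSpsi}^2$, or alternatively one can use the explicit formula $\Interpsi^\nugg g(x) = k_\Psi(x)^T (K_\Psi + \nugg I)^{-1} G$ together with the RKHS norm identity $\|k_\Psi(\cdot)^T c\|_{\RKHSpsi}^2 = c^T K_\Psi c$ for any coefficient vector $c \in \R^n$. Taking $c = (K_\Psi + \nugg I)^{-1} G$, we get
\begin{align*}
    \|\Interpsi^\nugg g\|_{\RKHSpsi}^2 = G^T (K_\Psi+\nugg I)^{-1} K_\Psi (K_\Psi + \nugg I)^{-1} G.
\end{align*}
The eigenvalues of the matrix $(K_\Psi+\nugg I)^{-1} K_\Psi (K_\Psi+\nugg I)^{-1}$ are $\mu_i/(\mu_i+\nugg)^2$, and since $\mu_i/(\mu_i+\nugg)^2 \le 1/(\mu_i+\nugg) \le 1/(\smin(K_\Psi)+\nugg)$, we conclude $\|\Interpsi^\nugg g\|_{\RKHSpsi}^2 \le |G|^2/(\smin(K_\Psi)+\nugg) = \|g\|_{\dtwo}^2/(\smin(K_\Psi)+\nugg)$, as required.

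The only genuine subtlety — and the step I'd be most careful about — is justifying the RKHS norm identity $\|k_\Psi(\cdot)^T c\|_{\RKHSpsi}^2 = c^T K_\Psi c$ when $g$ is merely in $L^\infty(\D)$ rather than in $\RKHSpsi$. This is not actually a problem: the interpolant $\Interpsi^\nugg g$ is by construction a finite linear combination $\sum_i c_i \Psi(\cdot, x_i)$ of kernel sections, hence automatically lies in $\RKHSpsi$ regardless of the regularity of $g$, and the reproducing property gives the identity directly. So the weak hypothesis on $g$ only affects whether pointwise evaluation $g(x_i)$ makes sense (handled by the "well-defined pointwise values" assumption) and does not obstruct anything downstream. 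Everything else is elementary linear algebra with symmetric matrices, so no real obstacle remains; the proof is short.
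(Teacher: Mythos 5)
Your proposal is correct and follows essentially the same route as the paper: both identify the interpolant's action on the design points as $K_\Psi(K_\Psi+\nugg I)^{-1}$ applied to the data vector, bound the residual $\nugg(K_\Psi+\nugg I)^{-1}$ by its largest eigenvalue, and bound the RKHS norm via $\alpha^T K_\Psi \alpha$ with $\alpha=(K_\Psi+\nugg I)^{-1}Y$ and the matrix inequality $(K_\Psi+\nugg I)^{-1}K_\Psi(K_\Psi+\nugg I)^{-1}\preceq(K_\Psi+\nugg I)^{-1}$. Your extra remark on why the weak regularity of $g$ causes no trouble is a nice touch but not a departure from the paper's argument.
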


Now given the technical lemmas, we are ready to present the building blocks that will lead to the proof of Theorem \ref{thm:misspecifed kriging error}.  
The following lemma provides $L^2(\Omega)$ and $L^\infty(\Omega)$ bounds on kriging interpolants in the well-specified case where $f$ belongs to the RKHS of the kernel.  

\begin{lemma}\label{lemma:approx interpolation error}
Let $f \in \RKHSpsi$ and suppose that $\RKHSpsi$ is norm equivalent to a subspace of $H^s(\D)$ for some $s>\frac{d}{2}$: there exists $A>1$ such that $A^{-1}\|g\|_{H^s(\D)} \leq \|g\|_{\RKHSpsi}\leq A\|g\|_{H^s(\D)}$ for all $g\in \RKHSpsi$. If $K_\Psi$ is invertible, then there exists $h_{\lfloor s\rfloor,d,\Omega}$ such that for $\h\leq h_{\lfloor s\rfloor,d,\Omega}$ we have
    \begin{align*}
    \|f-\Interpsi^\nugg f\|_{L^2(\Omega)}& \leq C_{s,d,\Omega}A^2\big(\h^s+\h^{\frac{d}{2}}\sqrt{\nugg}\big) \|f\|_{H^s(\D)},\\
        \|f-\Interpsi^\nugg f\|_{L^{\infty}(\Omega)}&\leq  C_{s,d,\Omega}A^2\big(\h^{s-\frac{d}{2}}+\sqrt{\nugg}\big) \|f\|_{H^s(\D)}.
    \end{align*}
If $\nugg=0$, then the constant $C_{s,d,\Omega}$ can be chosen to only depend on $\lfloor s\rfloor,d,$ and $\Omega$.
\end{lemma}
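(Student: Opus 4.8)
The plan is to prove Lemma \ref{lemma:approx interpolation error} by combining the sampling theorem (Lemma \ref{lemma:sampling theorem}), the approximate interpolation bound (Lemma \ref{lemma:regularized LS}), and the norm equivalence between $\RKHSpsi$ and a subspace of $H^s(\D)$. The object to control is the error function $e := f - \Interpsi^\nugg f$. The key structural fact is that $e$, while not vanishing on $X_n$ when $\nugg > 0$, is small on $X_n$: by Lemma \ref{lemma:regularized LS}, $\|e\|_{\dtwo} \le \sqrt{\nugg}\,\|f\|_{\RKHSpsi}$. Moreover, $e$ lies in $\RKHSpsi$ (since both $f$ and $\Interpsi^\nugg f$ do), so it also lies in $H^s(\Omega)$ up to norm equivalence, and we can bound its Sobolev norm: $\|e\|_{H^s(\D)} \le A\|e\|_{\RKHSpsi} \le A\big(\|f\|_{\RKHSpsi} + \|\Interpsi^\nugg f\|_{\RKHSpsi}\big) \le 2A\|f\|_{\RKHSpsi} \le 2A^2\|f\|_{H^s(\D)}$, where the second inequality in the chain for $\|\Interpsi^\nugg f\|_{\RKHSpsi}$ is the second line of Lemma \ref{lemma:regularized LS}.

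The steps I would carry out in order: (1) verify $e \in \RKHSpsi$ and hence $e|_\Omega \in H^s(\Omega)$, recording $\|e\|_{H^s(\D)} \le 2A^2 \|f\|_{H^s(\D)}$ as above (noting $H^s(\D)$ restricts to $H^s(\Omega)$ with controlled norm since $\Omega \subset \D$); (2) apply the sampling theorem of Lemma \ref{lemma:sampling theorem} to $u = e$, provided $\h \le h_{\lfloor s\rfloor, d, \Omega}$, to get
\begin{align*}
\|e\|_{L^2(\Omega)} &\le C_{s,d,\Omega}\big[\h^s \|e\|_{H^s(\Omega)} + \h^{d/2}\|e\|_{\dtwo}\big],\\
\|e\|_{L^\infty(\Omega)} &\le C_{s,d,\Omega}\big[\h^{s-d/2}\|e\|_{H^s(\Omega)} + \|e\|_{\dtwo}\big];
\end{align*}
(3) substitute the two bounds $\|e\|_{H^s(\Omega)} \lesssim A^2 \|f\|_{H^s(\D)}$ and $\|e\|_{\dtwo} \le \sqrt{\nugg}\|f\|_{\RKHSpsi} \le A\sqrt{\nugg}\|f\|_{H^s(\D)}$, and collect terms to obtain the stated $L^2(\Omega)$ and $L^\infty(\Omega)$ inequalities with an overall factor $A^2$ (absorbing the constant $2$ and the extra $A$ into $C_{s,d,\Omega}$); (4) observe that when $\nugg = 0$ the error $e$ vanishes on $X_n$, so the final clause of Lemma \ref{lemma:sampling theorem} applies and the constant depends only on $\lfloor s\rfloor, d, \Omega$.

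I do not expect a serious obstacle here — this lemma is essentially an assembly of the three cited ingredients — but the point requiring a little care is the bookkeeping of the norm-equivalence constant $A$: one must track that it appears quadratically (once in passing from $\RKHSpsi$ to $H^s$ for the bound on $\|\Interpsi^\nugg f\|_{\RKHSpsi}$, and once more in converting the resulting RKHS-norm bound on $e$ back to an $H^s$-norm bound to feed into the sampling theorem), and that the $\sqrt{\nugg}$ term picks up only a single power of $A$, which is consistent with the stated bound since $A > 1$ so a single power is dominated by $A^2$. A secondary minor point is justifying that $\|e\|_{H^s(\Omega)} \le \|e\|_{H^s(\D)}$, which is immediate from the definition of the restriction norm on $H^s(\Omega)$ given in the excerpt.
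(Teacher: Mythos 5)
Your proposal is correct and follows essentially the same route as the paper's own proof: apply the sampling theorem to the error $f-\Interpsi^\nugg f$, control its $H^s$ norm via norm equivalence together with the RKHS stability bound $\|\Interpsi^\nugg f\|_{\RKHSpsi}\le\|f\|_{\RKHSpsi}$ from Lemma \ref{lemma:regularized LS}, control its $L^2(X_n)$ norm via the same lemma, and invoke the vanishing-on-$X_n$ clause of the sampling theorem when $\nugg=0$. Your bookkeeping of the powers of $A$ matches the paper's as well.
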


\begin{proof}[Proof of Lemma \ref{lemma:approx interpolation error}]
By assumption $f-\Interpsi^\nugg f\in H^s(\D).$ Hence,  Lemma \ref{lemma:sampling theorem} implies that there exists $h_{\lfloor s\rfloor,d,\Omega}$ and $c_{s,d,\Omega}$ such that if $\h\leq h_{\lfloor s\rfloor,d,\Omega}$ we have 
\begin{align*}
    \|f-\Interpsi^\nugg f\|_{L^2(\Omega)} &\leq c_{s,d,\Omega}\left[ \h^s \|f-\Interpsi^\nugg f\|_{H^s(\Omega)} + \h^{\frac{d}{2}}\|f-\Interpsi^\nugg f\|_{\dtwo}\right]\\
    &\leq c_{s,d,\Omega}\left[ \h^s \|f-\Interpsi^\nugg f\|_{H^s(\D)} + \h^{\frac{d}{2}}\sqrt{\nugg}\|f\|_{\RKHSpsi}\right],
\end{align*}
where we have used Lemma \ref{lemma:regularized LS} for the last inequality. By norm equivalence, 
\begin{align*}
    \|f-\Interpsi^\nugg f\|_{H^s(\D)}
    &\leq  A\|f-\Interpsi^\nugg f\|_{\RKHSpsi}\\
    &\leq  A\bigl[\|f\|_{\RKHSpsi}+\|\Interpsi^\nugg f\|_{\RKHSpsi} \bigr]\leq 2A\|f\|_{\RKHSpsi},
\end{align*}
where the last step follows from Lemma \ref{lemma:regularized LS}. Now, by norm equivalence again, 
\begin{align*}
    \|f-\Interpsi^\nugg f\|_{L^2(\Omega)} &\leq C_{s,d,\Omega}A\big(\h^s+\h^{\frac{d}{2}}\sqrt{\nugg}\big) \|f\|_{\RKHSpsi}\\
    &\leq C_{s,d,\Omega}A^2\big(\h^s+\h^{\frac{d}{2}}\sqrt{\nugg}\big) \|f\|_{H^s(\D)}.
\end{align*}
The $L^{\infty}$ error bound is proved similarly. 

If $\nugg=0$, then we know $\Interpsi f$ interpolates $f$ over $X_n$ so that $(f-\Interpsi f)|_{X_n}=0$. Therefore by Lemma \ref{lemma:sampling theorem}, the constant $C_{s,d,\Omega}$ in above can be improved to only depend on $\lfloor s \rfloor$, $d,\Omega$. 
\end{proof}

To address misspecification, we need a result beyond the well-specified setting as in Lemma \ref{lemma:approx interpolation error}. 
The following lemma is the key step towards our part II analysis, which bounds the norm (or the Lebesgue constant) of the interpolation operator $\Interpsi^\lambda$ interpreted as being defined over $L^2(X_n)$.  
The result generalizes \cite{de2010stability} and may be of independent interest. 
\begin{lemma}\label{lemma:stability of interpolation}
Under the same conditions as in Lemma \ref{lemma:approx interpolation error} except that $f$ is only assumed to be in $L^\infty(\D)$ with well-defined pointwise values, the following holds: 
    \begin{align*}
        \|\Interpsi^\lambda f\|_{L^2(\Omega)} &\leq C_{s,d,\Omega} A\bigg[\frac{\h^{s}}{\sqrt{\smin(K_{\Psi})+\lambda}}+\h^{\frac{d}{2}}\bigg] \|f\|_{\dtwo}, \\
         \|\Interpsi^\lambda f\|_{L^\infty(\Omega)} & \leq  C_{s,d,\Omega} A\bigg[\frac{\h^{s-\frac{d}{2}}}{\sqrt{\smin(K_{\Psi})+\lambda}}+1\bigg] \|f\|_{\dtwo}.
    \end{align*}
    If $\nugg=0$, then the constant $C_{s,d,\Omega}$ can be chosen to only depend on $\lfloor s\rfloor,d,$ and $\Omega$.
\end{lemma}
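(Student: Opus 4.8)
The plan is to mimic the proof of Lemma \ref{lemma:approx interpolation error}, but using Lemma \ref{lemma:approx inter error on Xn} in place of Lemma \ref{lemma:regularized LS}, since now $f$ is only assumed to lie in $L^\infty(\D)$ rather than in $\RKHSpsi$. The function $\Interpsi^\lambda f$ lies in $\RKHSpsi$ (it is a linear combination of the kernel sections $\Psi(\cdot,x_i)$), hence by norm equivalence it lies in $H^s(\D)\subset H^s(\Omega)$, so the sampling theorem (Lemma \ref{lemma:sampling theorem}) applies to $u=\Interpsi^\lambda f$.

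First I would write, via Lemma \ref{lemma:sampling theorem} applied to $u = \Interpsi^\lambda f$,
\begin{align*}
    \|\Interpsi^\lambda f\|_{L^2(\Omega)} \leq c_{s,d,\Omega}\Big[\h^s \|\Interpsi^\lambda f\|_{H^s(\Omega)} + \h^{d/2}\|\Interpsi^\lambda f\|_{\dtwo}\Big].
\end{align*}
For the first term on the right, I bound $\|\Interpsi^\lambda f\|_{H^s(\Omega)} \leq \|\Interpsi^\lambda f\|_{H^s(\D)} \leq A\|\Interpsi^\lambda f\|_{\RKHSpsi}$ by norm equivalence, and then invoke the second estimate of Lemma \ref{lemma:approx inter error on Xn}, namely $\|\Interpsi^\lambda f\|_{\RKHSpsi} \leq (\smin(K_\Psi)+\lambda)^{-1/2}\|f\|_{\dtwo}$. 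For the second term I need to control $\|\Interpsi^\lambda f\|_{\dtwo}$ by $\|f\|_{\dtwo}$: using the triangle inequality $\|\Interpsi^\lambda f\|_{\dtwo} \leq \|f\|_{\dtwo} + \|f - \Interpsi^\lambda f\|_{\dtwo}$ together with the first estimate of Lemma \ref{lemma:approx inter error on Xn}, which gives $\|f - \Interpsi^\lambda f\|_{\dtwo} \leq \frac{\lambda}{\smin(K_\Psi)+\lambda}\|f\|_{\dtwo} \leq \|f\|_{\dtwo}$, so $\|\Interpsi^\lambda f\|_{\dtwo} \leq 2\|f\|_{\dtwo}$. Combining the two pieces yields the claimed $L^2(\Omega)$ bound (absorbing the factor $2$ and $c_{s,d,\Omega}$ into $C_{s,d,\Omega}$). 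The $L^\infty(\Omega)$ bound follows by the same argument using the second line of Lemma \ref{lemma:sampling theorem}; note the trailing additive $1$ (rather than $\h^{d/2}$) comes from the $\|u\|_{\dtwo}$ term there not being multiplied by any power of $\h$.

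For the improvement when $\lambda = 0$: in that case $\Interpsi f$ exactly interpolates $f$ on $X_n$, so it is \emph{not} true that $(\Interpsi f)|_{X_n} = 0$ — rather, $(\Interpsi f)|_{X_n} = f|_{X_n}$, which is not zero in general. So the trick used in Lemma \ref{lemma:approx interpolation error} (applying the vanishing-on-$X_n$ version of the sampling theorem to $f - \Interpsi f$) does not transfer verbatim. The subtlety I expect to be the main obstacle is justifying the $\lfloor s \rfloor$-dependence of the constant here; the resolution is that $\Interpsi f \in \RKHSpsi$ is itself a native-space interpolant, so one should instead use the standard fact that for the exact interpolation operator one may work with the power function / a native-space argument in which the relevant constant from the sampling inequality depends only on $\lfloor s\rfloor$ — concretely, decompose $\Interpsi f$ against its own interpolant or appeal directly to the refined sampling inequality for $H^s$ functions that agree with a band-limited object on $X_n$. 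I would state this carefully, citing the same references as Lemma \ref{lemma:sampling theorem}, and otherwise keep the estimate identical. Everything else is routine bookkeeping of constants.
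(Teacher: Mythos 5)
Your main argument is correct and is essentially the paper's proof verbatim: apply Lemma \ref{lemma:sampling theorem} to $u=\Interpsi^\nugg f\in\RKHSpsi\subset H^s(\D)$, bound $\|\Interpsi^\nugg f\|_{H^s(\Omega)}$ via norm equivalence and the second estimate of Lemma \ref{lemma:approx inter error on Xn}, and bound $\|\Interpsi^\nugg f\|_{\dtwo}\le 2\|f\|_{\dtwo}$ via the triangle inequality and the first estimate.

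On the $\nugg=0$ refinement: you are right that $(\Interpsi f)|_{X_n}=f|_{X_n}$ need not vanish, so the vanishing-on-$X_n$ version of Lemma \ref{lemma:sampling theorem} does not apply to $u=\Interpsi f$, and your proposed workaround (a ``refined sampling inequality'' for functions agreeing with a band-limited object on $X_n$) is a gesture rather than an argument. It is worth noting, however, that the paper's own proof of this lemma also supplies no justification for the improved constant --- it proves the two displayed bounds and then stops --- so you have correctly isolated a soft spot in the write-up rather than missed an idea the paper actually provides; absent a genuine fix, the safe course is to claim only the $C_{s,d,\Omega}$ version here.
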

\begin{proof}[Proof of Lemma \ref{lemma:stability of interpolation}]
By Lemma \ref{lemma:sampling theorem} applied to $\Interpsi^\nugg f\in H^s(\D)$, we have 
\begin{align}\label{eq:stability step 1}
    \|\Interpsi^\nugg f\|_{L^2(\Omega)} 
    &\leq C_{s,d,\Omega}\left[\h^s\|\Interpsi^\nugg f\|_{H^s(\Omega)} + \h^{\frac{d}{2}}\|\Interpsi^\nugg f\|_{\dtwo} \right].
\end{align}
Now since $f\in L^\infty(\D)$, norm equivalence and Lemma \ref{lemma:approx inter error on Xn} gives that 
\begin{align*}
    \|\Interpsi^\nugg f\|_{H^s(\Omega)} &\leq A\|\Interpsi^\nugg f\|_{\RKHSpsi}\leq A\frac{\|f\|_{\dtwo}}{\sqrt{\smin(K_{\Psi})+\nugg}},\\
    \|\Interpsi^\nugg f\|_{\dtwo}
    &\leq \|f\|_{\dtwo} +\frac{\nugg}{\smin(K_{\Psi})+\nugg}\|f\|_{\dtwo}\leq 2\|f\|_{\dtwo}.
\end{align*}
Therefore, plugging these estimates in \eqref{eq:stability step 1}, we have 
\begin{align*}
    \|\Interpsi^\nugg f\|_{L^2(\Omega)} \leq C_{s,d,\Omega} A\bigg[\frac{\h^{s}}{\sqrt{\smin(K_{\Psi})+\lambda}}+\h^{\frac{d}{2}}\bigg] \|f\|_{\dtwo}.
\end{align*}   
The $L^\infty(\Omega)$ case can be proved similarly. 
\end{proof}

Now we are ready to prove Theorem \ref{thm:misspecifed kriging error}.

\begin{proof}[Proof of Theorem \ref{thm:misspecifed kriging error}] 
For both parts, since the function $f$ does not necessarily belong to the RKHS of $\Phi_N$, we decompose the interpolation error by considering functions $f_N \in \RKHSN$ for which standard interpolation error bounds hold.

\paragraph{(Part I)}
Let $f_N$ be as given in the statement. 
Since $f_N|_{X_n} \equiv f|_{X_n}$, we have $\Inter^\nugg f = \Inter^\nugg f_N$ and $\InterN^\nugg f = \InterN^\nugg f_N$. 
Therefore, denoting by $\|\cdot\|_{\mathbb{B}}$  either $\|\cdot\|_{\LtwoO}$ or $\|\cdot\|_{\LinfO}$, we can decompose the error as 
\begin{align*}
    \|f  - \InterN^\nugg f \|_{\B} \leq \underbrace{\|f-\Inter^\nugg f\|_{\B}}_{E_1} + \underbrace{\|\Inter^\nugg f_N-f_N\|_{\B}}_{E_2} + \underbrace{\|f_N-\InterN^\nugg f_N\|_{\B}}_{E_3}.
\end{align*}
Importantly, each term $E_i$ is of the form $\|g  - \Interpsi^\nugg g \|_{\B}$ for some kernel $\Psi$ and function $g$ with $g \in \mathcal{H}_\Psi.$ We can hence use Lemma \ref{lemma:approx interpolation error} to bound each term. Let $h_{\lfloor \so\rfloor,\lfloor \sN\rfloor,d,\Omega}$ be the smaller of $h_{\lfloor \so\rfloor,d,\Omega}$ and $h_{\lfloor \sN\rfloor,d,\Omega}$ given in Lemma \ref{lemma:approx interpolation error}. 

Consider the case where $\|\cdot\|_{\B}=\|\cdot\|_{\LtwoO}$. 
Since $f\in \RKHS\subset H^{\so}(\D)$, we have that, for $\h\leq h_{\lfloor \so\rfloor,\lfloor \sN\rfloor,d,\Omega},$ 
\begin{align*}
    E_1 &=\|f-\Inter^\nugg f\|_{\LtwoO}\leq C_{\so,d,\Omega}A_N^2\big(\h^{\so} +\h^{\frac{d}{2}}\sqrt{\nugg}\big) \|f\|_{H^{\so}(\D)}.
\end{align*}
Next, to bound $E_2$ we note that, by assumption, $f_N\in \RKHSN  \subset \RKHS\subset H^{\so}(\D)$. Hence, 
\begin{equation*}%
\begin{aligned}
    E_2 =\|\Inter^\nugg f_N-f_N\|_{\LtwoO}\leq C_{\so,d,\Omega}A_N^2\big(\h^{\so}+\h^{\frac{d}{2}}\sqrt{\nugg}\big) \|f_N\|_{H^{\so}(\D)}.
\end{aligned}
\end{equation*}
Finally, since $f_N\in \RKHSN\subset H^{\sN}(\D)$, 
\begin{equation*}%
\begin{aligned}
    E_3 =\|f_N-\InterN^\nugg f_N\|_{\LtwoO}\leq C_{\sN,d,\Omega}A_N^2\big(\h^{\sN}+\h^{\frac{d}{2}}\sqrt{\nugg}\big)\|f_N\|_{H^{\sN}(\D)}.
\end{aligned}
\end{equation*}
Combining the bounds for $E_1,$ $E_2,$ and $E_3$ gives the desired $\LtwoO$ bound. The $\LinfO$ bound is proved similarly. When $\nugg=0$, the constants $C_{\so,d,\Omega}$ and $C_{\sN,d,\Omega}$ can be replaced by $C_{\lfloor\so\rfloor,d,\Omega}$ and $C_{\lfloor\sN\rfloor,d,\Omega}$ by Lemma \ref{lemma:approx interpolation error}.

\paragraph{(Part II)}
We shall again introduce a function $f_N\in\RKHSN$, but we no longer require it to interpolate $f$. 
Denoting by $\|\cdot\|_{\mathbb{B}}$ either $\|\cdot\|_{\LtwoO}$ or $\|\cdot\|_{\LinfO}$, we have 
\begin{equation*} %
\begin{aligned}
    \| f - \InterN^\nugg f\|_{\mathbb{B}}\leq \underbrace{\|f - f_N\|_{\mathbb{B}}}_{F_1}  
    +   \underbrace{\left\|f_N-\InterN^\nugg f_N\right\|_\mathbb{B}}_{F_2}
   +  \underbrace{\left\| \InterN^\nugg f_N - \InterN^\nugg f \right\|_{\mathbb{B}}}_{F_3}.
\end{aligned}
\end{equation*}
Note that $F_2$ is the well-specified approximate interpolation error for $f_N$, which can be bounded using Lemma \ref{lemma:approx interpolation error}, and $F_3$ is the norm of $\InterN^\nugg (f_N-f)$, which can be bounded using Lemma \ref{lemma:approx inter error on Xn}. 
Let $h_{\lfloor \sN\rfloor,d,\Omega}$ be the constant given in Lemma \ref{lemma:approx interpolation error}. 

Consider first the case where $\|\cdot\|_\mathbb{B}=\|\cdot\|_{\LtwoO}$.
For $F_1$ we simply use that 
\begin{align*}
    F_1=\|f-f_N\|_{\LtwoO} \leq C_\Omega\|f-f_N\|_{\LinfO} \le C_{\Omega}\sqrt{n}\h^{d/2}\|f-f_N\|_{\LinfO},
\end{align*}
where the last inequality holds since $\h\gtrsim n^{-1/d}.$ 

For $F_2$, we note that $\RKHSN \subset H^{\sN} (\D)$, and so Lemma \ref{lemma:approx interpolation error} implies that, for $\h\leq h_{\lfloor \sN\rfloor,d,\Omega},$
\begin{align*}
    F_2=\|f_N-\InterN^\nugg f_N\|_{\LtwoO}\leq C_{\sN,d,\Omega}A_N^2\big(\h^{\sN}+\h^{\frac{d}{2}}\sqrt{\nugg}\big) \|f_N\|_{H^{\sN} (\D)}.
\end{align*}
To bound $F_3$, we apply Lemma \ref{lemma:stability of interpolation} to $\InterN^\nugg g$, where $g:=f_N-f\in L^\infty(\D)$:
\begin{align*}
    F_3= \|\InterN^\nugg g\|_{L^2(\Omega)} 
    &\leq C_{\sN,d,\Omega} A_N\sqrt{n}\bigg[\frac{\h^{\sN}}{\sqrt{\smin(K_{\Phi_N})+\lambda}}+\h^{\frac{d}{2}}\bigg] \|g\|_{\dinf}.
\end{align*} 
Now combining the bounds for $F_1,$ $F_2,$ and $F_3$, we obtain 
\begin{align*}
    \|f-\InterN^\nugg f\|_{L^2(\Omega)}&\leq C_{\sN,d,\Omega}A_N\sqrt{n}\bigg[\frac{\h^{\sN}}{\sqrt{\smin(K_{\Phi_N})+\nugg}}+\h^{\frac{d}{2}}\bigg] \|f_N-f\|_{L^{\infty}(\Omega)}\\  &\qquad\qquad  +C_{\sN,d,\Omega}A_N^2\big(\h^{\sN}+\h^{\frac{d}{2}}\sqrt{\nugg}\big)\|f_N\|_{H^{\sN}(\D)},
\end{align*}
as desired. 
The $\LinfO$ bound is proved similarly. When $\nugg=0$, the constant $C_{\sN,d,\Omega}$ can be improved to $C_{\lfloor\sN\rfloor,d,\Omega}$ by Lemma \ref{lemma:approx interpolation error}.
\end{proof}

\section{Epistemic and computational misspecification} \label{sec:examples}
In this section, we apply our unified framework in Theorem \ref{thm:misspecifed kriging error} to several examples.

\subsection{Epistemic misspecification in the Mat\'ern model} \label{sec:ex:para miss}

\subsubsection{Background}
The Mat\'ern kernel is defined by
\begin{align}\label{eq:Matern}
    \Phi(x,\tilde{x})= \sigma^2\frac{2^{1-\nu}}{\Gamma(\nu)} \left(\kappa|x-\tilde{x}|\right)^{\nu}K_{\nu}\left(\kappa|x-\tilde{x}|\right),\qquad x,\tilde{x}\in\R^d,
\end{align}
where $\Gamma$ is the gamma function and $K_\nu$ is the modified Bessel function of the second kind. 
Here the parameters $\sigma,\nu,$ and $\kappa$ control respectively the marginal variance, smoothness, and correlation lengthscale. Due to its modeling flexibility, the Mat\'ern kernel is widely used in spatial statistics \cite{stein1999interpolation}, inverse problems \cite{roininen2014whittle}, and machine learning \cite{williams2006gaussian}. 
Nevertheless, the performance of kernel methods in these and other applications is sensitive to the choice of parameters, and this choice is often based on potentially erroneous prior beliefs on the function to be modeled.

Let $f\in \RKHS,$ where $\Phi$ is a Matérn kernel with unknown parameters $\sigma_0,\nu_0,$ and $\kappa_0.$
Suppose we find the kriging interpolant of a function $f \in \RKHS$ using a kernel
\begin{align}\label{eq:ex-epistemic-kernel}
    \Phi_N(x,\tilde{x})= \sigma_N^2 \frac{2^{1-\nu_N}}{\Gamma(\nu_N)}(\kappa_N|x-\tilde{x}|)^{\nu_N} K_{\nu_N}(\kappa_N|x-\tilde{x}|),\qquad x,\tilde{x}\in \R^d, 
\end{align}
where $\sigma_N,\nu_N,$ and $\kappa_N$ are estimates of the unknown parameters that may be updated as we acquire more data, in which case $N$ depends on $n.$
We are interested in understanding the effect of such epistemic misspecification on the accuracy of the resulting kriging estimator. 
This question has been studied in previous works, e.g. \cite{teckentrup2020convergence,tuo2020kriging,wynne2021convergence}. We will next show that our unified framework is able to reproduce existing results.

\subsubsection{Error analysis}
We shall apply Theorem \ref{thm:misspecifed kriging error} part I for the error analysis. First, recall that the RKHS associated with the Mat\'ern kernel is norm equivalent to a Sobolev space. Specifically, for the true smoothness parameter $s_0:= \nu_0 + \frac{d}{2}$ and its estimate $s_N:= \nu_N + \frac{d}{2},$ we have
\begin{alignat*}{5}
    &\RKHS \hspace{0.06cm} =H^{\so}(\R^d),\qquad &&A_0^{-1}\|g\|_{H^{\so}(\R^d)} \hspace{0.05cm}\leq \|g\|_{\RKHS} \hspace{0.06cm}\leq \hspace{0.05cm} A_0\|g\|_{H^{\so}(\R^d)},\qquad &&\forall g\in H^{\so}(\R^d)\,,\\
    &\RKHSN \hspace{-0.1cm} =H^{\sN}(\R^d), &&A_N^{-1}\|g\|_{H^{\sN}(\R^d)}\leq \|g\|_{\RKHSN} \hspace{-0.07cm} \leq \hspace{-0.07cm}A_N\|g\|_{H^{\sN}(\R^d)},  &&\forall g\in H^{\sN}(\R^d) ,
\end{alignat*}
for some constants $A_0$ and $A_N$. 
Notice that misspecification of the parameters $\sigma$ and $\kappa$ does not affect the resulting RKHS. Furthermore, if the sequence of parameters $\sigma_N,\nu_N,\kappa_N$ are contained in a compact set $S\subset (0,\infty)^3$, then 
\cite[Lemma 3.4]{teckentrup2020convergence} shows that $A_N$ can be chosen to satisfy
\begin{align}\label{eq:ex:epistemic norm equiv constant}
   \underset{N}{\operatorname{max}}\,\, A_N^2 \leq \underset{N}{\operatorname{max}}\,\,\kappa_N \vee \kappa_N^{-1}<\infty,
\end{align}
so that the norm equivalence constants in Theorem \ref{thm:misspecifed kriging error} are uniformly bounded in $N$. 

The following lemma ensures the existence of a smoother interpolant for Sobolev functions, which corresponds to the function $f_N$ in Theorem \ref{thm:misspecifed kriging error} part I.

\begin{lemma}[{{\cite[Lemma 14]{tuo2020kriging}}}]\label{lemma:ex-epistemic-fN}
Suppose $s_N\geq s_0>\frac{d}{2}$. Then, for each $f^*\in H^{\so}(\mathbb{R}^d)$ there exists $f_N^*\in H^{\sN}(\mathbb{R}^d)$ with $f^*|_{X_n}=f^*_N|_{X_n}$ and 
    \begin{align*}
        \|f^*_N\|_{H^{\sN}(\mathbb{R}^d)} &\leq C\q^{\so-\sN}\|f^*\|_{H^{\so}(\mathbb{R}^d)}, \\
        \|f^*_N\|_{H^{\so}(\mathbb{R}^d)}& \leq C\|f^*\|_{H^{\so}(\mathbb{R}^d)},
    \end{align*}
    where $C$ is a constant independent of $N$. 
\end{lemma}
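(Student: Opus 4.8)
The plan is to construct $f_N^*$ explicitly as the $H^{s_N}$-minimal-norm interpolant of the data $f^*|_{X_n}$, i.e. the kriging interpolant associated with a kernel whose RKHS is norm-equivalent to $H^{s_N}(\R^d)$ (for instance a Mat\'ern kernel of smoothness $s_N$). Denote by $\mathcal{I}^{s_N} f^*$ this interpolant; by definition it agrees with $f^*$ on $X_n$, so the interpolation constraint $f_N^*|_{X_n} = f^*|_{X_n}$ holds automatically, and among all $H^{s_N}$ functions matching $f^*$ on $X_n$ it has the smallest $H^{s_N}$-seminorm (and, up to norm equivalence, the smallest $H^{s_N}$-norm). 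This reduces the problem to two quantitative estimates: a bound on $\|f_N^*\|_{H^{s_N}}$ of order $\q^{s_0 - s_N}$, and a bound on $\|f_N^*\|_{H^{s_0}}$ of order $\|f^*\|_{H^{s_0}}$.

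First I would establish the $H^{s_N}$ bound. Since $f_N^*$ is the minimal-$H^{s_N}$-norm interpolant, it suffices to exhibit \emph{some} competitor $g \in H^{s_N}(\R^d)$ with $g|_{X_n} = f^*|_{X_n}$ and $\|g\|_{H^{s_N}} \lesssim \q^{s_0 - s_N}\|f^*\|_{H^{s_0}}$. The natural candidate is a ``bump-function'' or ``molecule'' correction: take a compactly supported smooth approximation argument, or more directly mollify at scale $\q$ — convolve $f^*$ (or a suitable local modification) with a kernel scaled so that its support has radius $\lesssim \q$, which raises smoothness from $s_0$ to $s_N$ at the cost of a factor $\q^{-(s_N - s_0)}$ in the relevant seminorm, a standard Bernstein-type inverse estimate. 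One must also repair the values at the design points: because points in $X_n$ are $\q$-separated, one can add localized bumps supported on disjoint balls of radius $\lesssim \q$ centered at the $x_i$, each bump correcting a pointwise discrepancy of size $\lesssim \|f^*\|_{L^\infty} \lesssim \|f^*\|_{H^{s_0}}$ (Sobolev embedding, $s_0 > d/2$), and the disjointness of supports makes the $H^{s_N}$-norm of the sum controllable, again producing the factor $\q^{s_0 - s_N}$ after accounting for the scaling of derivatives. This is essentially the content of \cite[Lemma 14]{tuo2020kriging} / the Narcowich--Ward--Wendland ``escaping the native space'' machinery, and I would cite those estimates rather than reprove them.

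For the $H^{s_0}$ bound, the key observation is that $f_N^* - f^*$ vanishes on $X_n$ and lives in $H^{s_0}(\R^d)$, so a sampling/zero-on-$X_n$ inequality (in the spirit of Lemma \ref{lemma:sampling theorem}, applied on $\R^d$ or on a bounded region containing the data after the usual extension argument) gives $\|f_N^* - f^*\|_{L^2} \lesssim \h^{s_0}\|f_N^* - f^*\|_{H^{s_0}}$; combined with an interpolation inequality between $L^2$ and $H^{s_N}$ and the already-established $H^{s_N}$ bound, one bootstraps to $\|f_N^*\|_{H^{s_0}} \lesssim \|f^*\|_{H^{s_0}}$ with a constant independent of $N$ (here quasi-uniformity is \emph{not} needed — only $\h \le c\, \q$ fails, so one keeps $\h$ and $\q$ separate and the powers still work out). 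The main obstacle, and the step requiring the most care, is the first estimate: producing a competitor whose $H^{s_N}$-norm scales sharply like $\q^{s_0 - s_N}$ while simultaneously interpolating the data exactly — the interplay between the mollification scale, the separation radius, and the disjoint-support bump construction is where all the technical weight sits. Since this is precisely \cite[Lemma 14]{tuo2020kriging}, in the paper one would simply invoke it; the sketch above is how one would reprove it from scratch.
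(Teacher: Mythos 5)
The paper itself gives no proof of this lemma: it is quoted verbatim from \cite[Lemma 14]{tuo2020kriging} and used as a black box, so your decision to ``simply invoke it'' matches the paper exactly. The problem is with the sketch you offer of the underlying argument, which has a genuine gap in the second estimate. You take $f_N^*$ to be the minimal-norm $H^{\sN}$ interpolant and then try to recover the $H^{\so}$ bound by combining the sampling inequality for $f_N^*-f^*$ (which vanishes on $X_n$) with interpolation between $L^2$ and $H^{\sN}$. This cannot close: the difference $f_N^*-f^*$ lies only in $H^{\so}(\R^d)$, not in $H^{\sN}(\R^d)$, so the interpolation inequality must be applied to $f_N^*$ itself, and with $\theta=\so/\sN$ it gives
\begin{align*}
\|f_N^*\|_{H^{\so}(\R^d)} \lesssim \|f_N^*\|_{L^2(\R^d)}^{1-\theta}\,\|f_N^*\|_{H^{\sN}(\R^d)}^{\theta} \lesssim \|f^*\|_{H^{\so}(\R^d)}^{1-\theta}\bigl(\q^{\so-\sN}\|f^*\|_{H^{\so}(\R^d)}\bigr)^{\theta} = \q^{-\so(\sN-\so)/\sN}\|f^*\|_{H^{\so}(\R^d)},
\end{align*}
which blows up as $\q\to 0$ instead of producing an $N$- and $n$-independent constant; the sampling inequality only bounds $\|f_N^*-f^*\|_{L^2}$ \emph{in terms of} $\|f_N^*-f^*\|_{H^{\so}}$ and so cannot bootstrap the latter. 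The competitor construction for the first estimate is also under-specified: correcting pointwise discrepancies of size $\lesssim\|f^*\|_{L^\infty}$ by $n$ disjoint bumps of width $\q$ contributes an $H^{\sN}$ norm of order $\sqrt{n}\,\q^{d/2-\sN}\|f^*\|_{H^{\so}(\R^d)}$, exceeding the target $\q^{\so-\sN}\|f^*\|_{H^{\so}(\R^d)}$ by a factor $\sqrt{n}\,\q^{d/2-\so}$; one needs the post-mollification discrepancies to be square-summable at scale $\q^{2\so-d}$, and that is exactly where the technical content lives.

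The actual proof in \cite{tuo2020kriging}, following Narcowich--Ward--Wendland, sidesteps both issues by building $f_N^*$ directly as a \emph{band-limited} interpolant: apply the norming-set result---reproduced in this paper as Proposition \ref{prop:exist interpolant abstract}---with $\mathcal{Y}=H^{\so}(\R^d)$, $\mathcal{V}$ the Paley--Wiener space of bandwidth $\sigma\asymp\q^{-1}$, and $\mathcal{Z}'=\operatorname{span}\{\delta_{x_1},\ldots,\delta_{x_n}\}$. The resulting interpolant satisfies $\|f_N^*-f^*\|_{H^{\so}(\R^d)}\leq(1+2\hat{\gamma})\operatorname{dist}_{H^{\so}}(f^*,\mathcal{V})\leq(1+2\hat{\gamma})\|f^*\|_{H^{\so}(\R^d)}$, which yields the second bound immediately and without any quasi-uniformity assumption; the first bound then follows from the Bernstein inequality $\|g\|_{H^{\sN}(\R^d)}\lesssim\sigma^{\sN-\so}\|g\|_{H^{\so}(\R^d)}$ valid for band-limited $g$. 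If you want to supply a proof rather than the citation, that is the route to take.
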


With this lemma, Theorem \ref{thm:misspecifed kriging error} part I can be applied to yield the following result, which reproduces the $L^2$ and $L^\infty$ bounds from \cite[Theorem 3.5]{teckentrup2020convergence} and \cite[Theorem 4]{wynne2021convergence} in the noiseless case. We present the results in a slightly different way from these two theorems, as we make explicit the convergence order in $N.$ 

\begin{theorem}
Let $f\in H^{\so}(\Omega)$ and let $\Phi_N$  be defined as in \eqref{eq:ex-epistemic-kernel}. 
Suppose that there exist $\underbar{B},\overline{B}>0$ independent of $N$ such that $\underbar{B} <\sigma_N,\nu_N,\kappa_N< \overline{B}$ with $\nu_N$ taking only finitely many values. Then, there exists $h_0$ 
independent of $N$ such that for $\h\leq h_0$, setting $s_N :=\nu_N+\frac{d}{2}$ and choosing $\sqrt{\nugg}\asymp \h^{\operatorname{inf}_N \nu_N}$ we have
\begin{align*}
    \|f-\InterN^\nugg f\|_{L^2(\Omega)} &\leq C \h^{\sN} \q^{(\so-\sN)\wedge 0}\|f\|_{H^{\so}(\Omega)},\\
    \|f-\InterN^\nugg f\|_{L^{\infty}(\Omega)} &\leq C\h^{\sN-d/2} \q^{(\so-\sN)\wedge 0}\|f\|_{H^{\so}(\Omega)},
\end{align*}
where $C$ is a constant independent of $N$ and $n$.  
When $\nugg=0$, the same bounds hold without assuming that $\nu_N$ take only finitely many values. 
\end{theorem}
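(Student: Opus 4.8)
The plan is to apply Theorem~\ref{thm:misspecifed kriging error} Part~I with the ground-truth Mat\'ern kernel $\Phi$ of smoothness $s_0$ and the misspecified Mat\'ern kernel $\Phi_N$ of smoothness $s_N$, using the smoother interpolant supplied by Lemma~\ref{lemma:ex-epistemic-fN} as the function $f_N$. First I would dispose of an easy reduction: the interesting (and genuinely misspecified) case is the \emph{oversmoothing} regime $s_N \ge s_0$, since that is where Lemma~\ref{lemma:ex-epistemic-fN} produces a genuine $f_N \neq f$; when $s_N < s_0$ one has $f \in \RKHS \subset \RKHSN$ already, so one can simply take $f_N = f$ (or equivalently quote Lemma~\ref{lemma:approx interpolation error} directly with the kernel $\Phi_N$), and the exponent $\q^{(s_0 - s_N)\wedge 0} = \q^0 = 1$ in the claimed bound reflects exactly this. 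So the substance is the case $s_N \ge s_0$.

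In that case, since the $\sigma_N,\nu_N,\kappa_N$ lie in a compact subset of $(0,\infty)^3$, the norm-equivalence constants $A_N$ are uniformly bounded by \eqref{eq:ex:epistemic norm equiv constant}, and since $\nu_N$ takes finitely many values the constants $C_{\lfloor s_0\rfloor,\lfloor s_N\rfloor,d,\Omega}$ from Theorem~\ref{thm:misspecifed kriging error} and the fill-distance threshold $h_{\lfloor s_0\rfloor,\lfloor s_N\rfloor,d,\Omega}$ can all be replaced by their worst case over the finitely many values of $\nu_N$, giving $N$-independent constants $C$ and $h_0$. Next I would produce $f_N$: extend $f\in H^{s_0}(\Omega)$ to $f^* \in H^{s_0}(\R^d)$ with $\|f^*\|_{H^{s_0}(\R^d)} \lesssim \|f\|_{H^{s_0}(\Omega)}$, apply Lemma~\ref{lemma:ex-epistemic-fN} to get $f_N^* \in H^{s_N}(\R^d) = \RKHSN$ with $f_N^*|_{X_n} = f^*|_{X_n} = f|_{X_n}$, and set $f_N := f_N^*$ (restricted to $\D$). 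Since $\nu_N < \nu_0$ is excluded here, $s_N \ge s_0$ ensures $\RKHSN = H^{s_N}(\R^d) \subset H^{s_0}(\R^d) = \RKHS$ as required, and the lemma gives $\|f_N\|_{H^{s_0}(\D)} \lesssim \|f\|_{H^{s_0}(\Omega)}$ and $\|f_N\|_{H^{s_N}(\D)} \lesssim \q^{s_0 - s_N}\|f\|_{H^{s_0}(\Omega)}$.

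Now I plug into the $L^2$ bound of Theorem~\ref{thm:misspecifed kriging error} Part~I. The bracket becomes
\begin{align*}
\big(\h^{s_0} + \h^{d/2}\sqrt{\nugg}\big)\big(\|f\|_{H^{s_0}(\D)} + \|f_N\|_{H^{s_0}(\D)}\big) + \big(\h^{s_N} + \h^{d/2}\sqrt{\nugg}\big)\|f_N\|_{H^{s_N}(\D)}.
\end{align*}
With the choice $\sqrt{\nugg} \asymp \h^{\inf_N \nu_N}$ one has $\h^{d/2}\sqrt{\nugg} \asymp \h^{d/2 + \inf_N \nu_N} = \h^{\inf_N s_N} \le \h^{s_N} \le \h^{s_0}$, so both nugget contributions are absorbed; the first group is then $\lesssim \h^{s_0}\|f\|_{H^{s_0}(\Omega)}$ and the second is $\lesssim \h^{s_N}\q^{s_0 - s_N}\|f\|_{H^{s_0}(\Omega)}$. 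Since $s_N \ge s_0$ and $\q \le \h$, we have $\h^{s_0} = \h^{s_N}\h^{s_0 - s_N} \le \h^{s_N}\q^{s_0 - s_N}$ (the exponent $s_0 - s_N \le 0$, so replacing $\h$ by the smaller $\q$ increases the quantity), hence the second term dominates and the whole bound is $\lesssim \h^{s_N}\q^{(s_0 - s_N)\wedge 0}\|f\|_{H^{s_0}(\Omega)}$, as claimed. The $L^\infty$ bound is identical with $s_0, s_N$ replaced by $s_0 - d/2, s_N - d/2$ and the nugget term $\sqrt{\nugg} \asymp \h^{\inf \nu_N}$ again absorbed into $\h^{s_N - d/2}$. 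When $\nugg = 0$ the argument is the same without needing the nugget bookkeeping, and Theorem~\ref{thm:misspecifed kriging error} still delivers the improved constant $C_{\lfloor s_0\rfloor,\lfloor s_N\rfloor,d,\Omega}$, which is bounded over the full (now possibly infinite) range of $\nu_N$ provided $\lfloor \nu_N \rfloor$ stays bounded --- which it does since $\nu_N < \overline{B}$ --- so finiteness of the value set of $\nu_N$ is unnecessary. I do not expect a serious obstacle here; the one point requiring care is the uniform-in-$N$ control of all constants and of $h_0$, which rests on combining the compactness of the parameter set (for $A_N$) with the observation that only $\lfloor s_0\rfloor, \lfloor s_N\rfloor$ enter the relevant constants, together with quoting Lemma~\ref{lemma:ex-epistemic-fN}'s constant as $N$-independent.
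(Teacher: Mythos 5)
Your proposal follows essentially the same route as the paper's proof: extend $f$ to $f^*\in H^{\so}(\R^d)$, split into the cases $\sN\le \so$ (where Lemma~\ref{lemma:approx interpolation error} applies directly, i.e.\ your ``take $f_N=f$'' reduction) and $\sN>\so$ (where Part~I of Theorem~\ref{thm:misspecifed kriging error} is applied with the interpolant $f_N^*$ from Lemma~\ref{lemma:ex-epistemic-fN}), and control $A_N$, the constants, and $h_0$ uniformly in $N$ via the compact parameter set and the finitely many values of $\nu_N$ (dropping the latter when $\nugg=0$), exactly as the paper does. One small caveat, which your write-up shares with the paper's: the absorption step $\h^{\inf_N \sN}\le \h^{\sN}$ goes the wrong way for $\h<1$ whenever $\sN>\inf_N \sN$ (absorbing the nugget into $\h^{\sN}$ really requires $\sqrt{\nugg}\lesssim \h^{\sup_N\nu_N}$), so this is an imprecision inherited from the stated choice of $\nugg$ rather than a defect of your argument.
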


\begin{proof}
First, Proposition \ref{prop:sobolev ext} implies the existence of $f^*\in H^{\so}(\R^d)$ such that 
\begin{align*}
    f^*|_{\Omega}=f|_{\Omega},\qquad \|f^*\|_{H^{\so}(\R^d)}\leq C\|f^*\|_{H^{\so}(\Omega)},
\end{align*}
where $C$ is independent of $f^*$.
Furthermore, since $\Phi_N$ is positive definite, the kernel matrix $K_{\Phi_N}$ is invertible. 
Next, we consider two cases: $s_N\leq s_0$ and $s_N>s_0$.  

\paragraph{Case 1: $s_N\leq s_0$} In this case $f^*\in H^{\so}(\R^d)=\RKHS\subset \RKHSN=H^{\sN}(\R^d)$, so the error becomes a standard approximate interpolation error. By Lemma \ref{lemma:approx interpolation error}, there exist $h_{\lfloor\sN\rfloor,d,\Omega}$ such that if $\h\leq h_{\lfloor\sN\rfloor,d,\Omega}$ then we have 
 \begin{align*}
    &\|f-\InterN^\nugg f\|_{L^2(\Omega)} = \|f^*-\InterN^\nugg f^*\|_{L^2(\Omega)}\\
    &\leq C_{\sN,d,\Omega}A_N^2\big(\h^{\sN}+\h^{\frac{d}{2}}\sqrt{\nugg}\big) \|f\|_{H^{\sN}(\R^d)}
    \leq C_{\sN,d,\Omega}\big(\h^{\sN}+\h^{\frac{d}{2}}\sqrt{\nugg}\big) \|f\|_{H^{\sN}(\Omega)},
\end{align*}
where we have used \eqref{eq:ex:epistemic norm equiv constant} and similarly 
\begin{align*}
    \|f - \InterN f\|_{\LinfO} 
        &\leq C_{\sN,d,\Omega}\big(\h^{\sN-d/2}+\sqrt{\nugg}\big) \|f^*\|_{H^{\sN}(\Omega)}.
\end{align*}

\paragraph{Case 2: $s_N>s_0$} In this case $f_N^*\in H^{\sN}(\R^d)=\RKHSN\subset \RKHS =H^{\so}(\R^d)$ and we can apply Theorem \ref{thm:misspecifed kriging error} part I to $f^*$ and the $f_N^*$ constructed in Lemma \ref{lemma:ex-epistemic-fN} to obtain that for $\h \leq h_{\lfloor \so\rfloor,\lfloor \sN\rfloor,d,\Omega}$
\begin{equation*}
    \begin{aligned}
         &\|f - \InterN^\nugg f\|_{\LtwoO} 
        =  \|f^* - \InterN^\nugg f^*\|_{\LtwoO} \\
        &\leq C_{\so,\sN,d,\Omega} \Bigl[\big(\h^{\so}+\h^{\frac{d}{2}}\sqrt{\nugg}\big) \bigl(\|f^*\|_{H^{\so}(\R^d)}+\|f_N^*\|_{H^{\so}(\R^d)}\bigr) \\
        & \hspace{6.65cm} + \big(\h^{\sN}+\h^{\frac{d}{2}}\sqrt{\nugg}\big)\|f_N^*\|_{H^{\sN}(\R^d)}\Bigr]\\
        &\leq C_{\so,\sN,d,\Omega}\left[ \big(\h^{\so}+\h^{\frac{d}{2}}\sqrt{\nugg}\big) \|f^*\|_{H^{\so}(\R^d)}+\big(\h^{\sN} +\h^{\frac{d}{2}}\sqrt{\nugg}\big)\q^{\so-\sN} \|f^*\|_{H^{\so}(\R^d)}\right]\\
        &\leq C_{\so,\sN,d,\Omega}\big(\h^{\sN}+\h^{\frac{d}{2}}\sqrt{\nugg}\big) \q^{\so-\sN} \|f^*\|_{H^{\so}(\Omega)},
    \end{aligned}
\end{equation*}
where we have used that $\q\leq \h$ in the last step, and similarly
\begin{equation*}
    \begin{aligned}
        \|f - \InterN f\|_{\LinfO} 
        &\leq C_{\so,\sN,d,\Omega}\big(\h^{\sN-d/2}+\sqrt{\nugg}\big) \q^{\so-\sN} \|f^*\|_{H^{\so}(\Omega)}.
    \end{aligned}
\end{equation*}
Since by assumption $\nu_N$ takes only finitely many values, the constants $C_{\so,\sN,d,\Omega}$ for different $N$ can be uniformly upper bounded by a constant $C$ independent of $N$ and $n$. Furthermore, the numbers $h_{\lfloor \so\rfloor,\lfloor \sN\rfloor,d,\Omega}$ can be lower bounded by a number $h_0$ independent of $N$. 
The first assertions follow by combining both cases and setting $\sqrt{\nugg}\asymp \h^{\operatorname{inf}_N \nu_N}$.

If $\nugg=0$, then Theorem \ref{thm:misspecifed kriging error} suggests that the constants $C_{\so,\sN,d,\Omega}$ can be improved to $C_{\lfloor\so\rfloor,\lfloor\sN\rfloor,d,\Omega}$. 
Since now the new constants only depend on the integer parts of the $s_N$'s, which are uniformly bounded above and below, they can be uniformly upper bounded by a constant $C$ independent of $N$ and $n$. Similarly, a uniform lower bound $h_0$ independent of $N$ for the $h_{\lfloor \so\rfloor,\lfloor \sN\rfloor,d,\Omega}$'s can be obtained. 
\end{proof}

\subsection{Karhunen-Lo\`eve expansions}\label{sec:KL}

\subsubsection{Background}\label{ssec:KLbackground}
Let $u$ be the Gaussian field formally defined via the SPDE
\begin{align}\label{eq:SPDE bounded domain}
    (\kappa^2 - \Delta)^{s/2} u = \kappa^{s-d/2}\mathcal{W},\qquad x\in \D, 
\end{align}
supplemented with suitable boundary conditions,
where the fractional elliptic operator is defined spectrally and $\mathcal{W}$ is a spatial white noise with unit variance. The parameters $\kappa$ and $s$ in \eqref{eq:SPDE bounded domain} control the inverse lengthscale and smoothness of the field $u.$ We refer to \cite{lindgren2011explicit,sanz2022finite} for further details on the definition of Matérn-type Gaussian fields via SPDEs. For our purposes, it suffices to note that the covariance function of the field $u$ defined formally via \eqref{eq:SPDE bounded domain} is given by
\begin{align*}
    \Phi(x,\tilde{x}) = \sum_{i=1}^{\infty}(\kappa^2+\lambda_i)^{-s}\psi_i(x)\psi_i(\tilde{x}), \qquad x,\tilde{x}\in \D,
\end{align*}
where $(\lambda_i,\psi_i)$'s are the ordered eigenpairs of the associated Laplacian $-\Delta$ endowed with suitable boundary conditions. 
For computation, one may use the kernel
\begin{align}\label{eq:ex-KL trun kernel}
    \Phi_N(x,\tilde{x}) = \sum_{i=1}^{N}(\kappa^2+\lambda_i)^{-s}\psi_i(x)\psi_i(\tilde{x}), \qquad x,\tilde{x}\in\D, 
\end{align}
where $N \ge 1$ is a truncation parameter.
Such Karhunen-Lo\`eve expansions are appealing when explicit formulas are available for the Laplacian eigenpairs (e.g. in hyperrectangles or in the sphere \cite{sanz2022finite,lang2015isotropic}). Moreover, fast algorithms for elliptic eigenvalue problems have broadened their scope, making them attractive in large data applications  \cite{greengard2022equispaced}. 
We remark that \eqref{eq:ex-KL trun kernel} is also an instance of the Mercer kernel \cite{opfer2006multiscale}. 
The RKHSs associated with $\Phi$ and $\Phi_N$ are given by
\begin{align*}
    \RKHS&=\left\{ g = \sum_{i=1}^{\infty}a_i\psi_i: \|g\|_{\RKHS} :=  \sum_{i=1}^{\infty}a_i^2(\kappa^2+\lambda_i)^s<\infty\right\},\\
    \RKHSN &=\left\{ g = \sum_{i=1}^N a_i\psi_i:  \|g\|_{\RKHSN } :=\sum_{i=1}^Na_i^2(\kappa^2+\lambda_i)^s<\infty\right\}.
\end{align*}
In practice, the domain $\D$ is chosen so that the experimental region $\Omega$ is compactly contained in $\D.$ By doing so, one avoids boundary artifacts near the boundary of $\Omega,$ thus obtaining kriging interpolants that are accurate up to the boundary  \cite{khristenko2019analysis,harlim2022graph}.

\subsubsection{Error analysis: general domain}\label{ssec:KLgeneraldomain}

In this subsection, we consider a general domain $\D$ and define the covariance function by equipping \eqref{eq:SPDE bounded domain} with Neumann boundary condition. We focus on computational misspecification and assume that the smoothness parameter $s$ in the definition of $\Phi_N$ matches the true smoothness $s_0$.

Following \cite{dunlop2020large}, we define for $m\in\mathbb{N}$ the function spaces 
\begin{align*}
    H^{2m}_{\NB}(\D)=\left\{u\in H^{2m}(\D): \frac{\partial (-\Delta)^r u}{\partial n}=0 \text{ for all } 0 \leq r \leq m-1 \text{ on } \partial \D\right\}
\end{align*}
and $H^{2m+1}_{\NB}(\D)=H^{2m+1}(\D)\cap H^{2m}_{\NB}(\D)$. A similar argument as in \cite[Lemma 17]{dunlop2020large} (adapting to the additional $\kappa$ term) shows that  $\RKHS=H^s_{\NB}(\D)$ with equivalent norms. Moreover,
$\|\cdot\|_{\RKHSN}$ agrees with $\|\cdot\|_{\RKHS}$ on $\RKHSN,$ so that
\begin{align}\label{eq:norm equiv KL general}
    A^{-1}\|g\|_{\RKHSN}=A^{-1}\|g\|_{\RKHS} \leq \|g\|_{H^s(\D)} \leq A\|g\|_{\RKHS}=A \|g\|_{\RKHSN},\quad \forall g\in \RKHSN, 
\end{align}
for some constant $A$ independent of $N$. By \cite[Theorem 2]{schaback2002approximation}, the kernel $\Phi$ is positive definite  since the space $H_{\NB}^s(\D)$ separates points.  

To carry out the analysis, given a function $f$ defined over $\Omega$, we will extend it first to be in $\RKHS$ and then apply Theorem \ref{thm:misspecifed kriging error} part I. The following lemma establishes the necessary technical steps.

\begin{lemma}\label{lemma:ex-kl-general-fN}
   Let $f\in H^s(\Omega)$ with $s>\frac{d}{2}$ an integer. There exists a function $f^*\in H^s_{\NB}(\D)=\RKHS$ such that
        \begin{align}            f^*|_{\Omega}=f|_{\Omega},\qquad \|f^*\|_{H^s(\D)}\leq C \|f\|_{H^s(\Omega)}, 
        \end{align}    
     where $C$ is a constant independent of $f$. 
     
     Suppose $\operatorname{sup}_{i\in\mathbb{N}}\|\psi_i\|_{\infty} \leq B$ for some constant $B$. Let $N$ be large enough so that
     \begin{align}\label{eq:KL general N}
    \sum_{j=N+1}^{\infty}(\kappa^2+\lambda_j)^{-s}\leq \frac{1}{\gamma^2} \frac{\sigma_{\min}(K_\Phi)}{nB^2},
\end{align}
where $\gamma>1$ is a fixed constant and $\sigma_{\min}(K_\Phi)$ denotes the smallest eigenvalue of $K_\Phi.$ 
 Then, there exists a function $f^*_N\in \RKHSN$ such that 
        \begin{align}
            f^*_N|_{X_n}=f^*|_{X_n},\qquad \|f^*_N\|_{H^s(\D)}\leq C \|f\|_{H^s(\Omega)}, 
        \end{align}
        where $C$ is a constant independent of $f,N,$ and $n$. 
\end{lemma}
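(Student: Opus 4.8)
The plan is to prove the two claims of the lemma separately, since they are essentially independent.

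For the first claim I would obtain $f^*$ by a Stein extension followed by a cutoff. Applying Proposition~\ref{prop:sobolev ext} to the Lipschitz domain $\Omega$ produces $F\in H^s(\R^d)$ with $F|_\Omega=f$ and $\|F\|_{H^s(\R^d)}\le C\|f\|_{H^s(\Omega)}$. Since $\D$ is chosen so that $\Omega\ssubset\D$ (Subsection~\ref{ssec:KLbackground}), I would fix a cutoff $\chi\in C_c^\infty(\D)$ with $\chi\equiv 1$ on a neighbourhood of $\overline\Omega$ and $\operatorname{supp}\chi\ssubset\D$, and set $f^*:=(\chi F)|_\D$. Then $f^*|_\Omega=f$, the product rule for Sobolev norms gives $\|f^*\|_{H^s(\D)}\le C(\chi)\|F\|_{H^s(\R^d)}\le C\|f\|_{H^s(\Omega)}$, and because $f^*$ vanishes in a neighbourhood of $\partial\D$ every boundary condition $\partial(-\Delta)^r f^*/\partial n=0$ defining $H^s_{\NB}(\D)$ holds trivially, so $f^*\in H^s_{\NB}(\D)=\RKHS$.

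For the second claim I would expand $f^*=\sum_{i\ge 1}a_i\psi_i$, so that $\|f^*\|_{\RKHS}^2=\sum_i a_i^2(\kappa^2+\lambda_i)^s<\infty$, and set $P_Nf^*:=\sum_{i\le N}a_i\psi_i\in\RKHSN$. Since this truncation need not interpolate $f^*$ on $X_n$, I would correct it by the minimal-norm element of $\RKHSN$ matching the residual there: with $r:=(f^*-P_Nf^*)|_{X_n}\in\R^n$, let $g:=k_{\Phi_N}(\cdot)^T K_{\Phi_N}^{-1}r$ and $f^*_N:=P_Nf^*+g\in\RKHSN$, so that $g(x_k)=r_k$ and hence $f^*_N|_{X_n}=f^*|_{X_n}$. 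To justify this I must first check that $K_{\Phi_N}$ is invertible: writing $K_\Phi=K_{\Phi_N}+K_{\Phi-\Phi_N}$ with $K_{\Phi-\Phi_N}\succeq 0$, its trace is $\sum_{k=1}^n\sum_{i>N}(\kappa^2+\lambda_i)^{-s}\psi_i(x_k)^2\le nB^2\sum_{i>N}(\kappa^2+\lambda_i)^{-s}$, which by hypothesis \eqref{eq:KL general N} is at most $\gamma^{-2}\sigma_{\min}(K_\Phi)$; hence $\|K_{\Phi-\Phi_N}\|_{\mathrm{op}}\le\gamma^{-2}\sigma_{\min}(K_\Phi)$, and Weyl's inequality gives $\sigma_{\min}(K_{\Phi_N})\ge(1-\gamma^{-2})\sigma_{\min}(K_\Phi)>0$.

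It then remains to bound the norm. One has $\|P_Nf^*\|_{\RKHSN}^2=\sum_{i\le N}a_i^2(\kappa^2+\lambda_i)^s\le\|f^*\|_{\RKHS}^2$ and $\|g\|_{\RKHSN}^2=r^T K_{\Phi_N}^{-1}r\le\|r\|_2^2/\sigma_{\min}(K_{\Phi_N})$, while Cauchy--Schwarz at each node together with $\sup_i\|\psi_i\|_\infty\le B$ yields $|(f^*-P_Nf^*)(x_k)|^2\le\|f^*\|_{\RKHS}^2\,B^2\sum_{i>N}(\kappa^2+\lambda_i)^{-s}$, so that $\|r\|_2^2\le nB^2\|f^*\|_{\RKHS}^2\sum_{i>N}(\kappa^2+\lambda_i)^{-s}\le\gamma^{-2}\sigma_{\min}(K_\Phi)\|f^*\|_{\RKHS}^2$ again by \eqref{eq:KL general N}. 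Combining these with the eigenvalue bound gives $\|g\|_{\RKHSN}^2\le\|f^*\|_{\RKHS}^2/(\gamma^2-1)$, hence $\|f^*_N\|_{\RKHSN}\le(1+(\gamma^2-1)^{-1/2})\|f^*\|_{\RKHS}$, and the norm equivalence \eqref{eq:norm equiv KL general} together with the first claim upgrades this to $\|f^*_N\|_{H^s(\D)}\le A^2(1+(\gamma^2-1)^{-1/2})\|f^*\|_{H^s(\D)}\le C\|f\|_{H^s(\Omega)}$ with $C$ independent of $f,N,n$. The only real difficulty is the uniformity of this last constant in $n$ and $N$: the crude bound on $\|g\|_{\RKHSN}$ carries both a factor $\sigma_{\min}(K_{\Phi_N})^{-1}$ and a $\sqrt n$ hidden in $\|r\|_2$, and hypothesis \eqref{eq:KL general N} is precisely what makes the eigenvalue tail $\sum_{i>N}(\kappa^2+\lambda_i)^{-s}$ absorb both of them (via the trace and Cauchy--Schwarz estimates and via Weyl's inequality), leaving a constant depending only on $\gamma$. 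The remaining points — pointwise convergence of $\sum a_i\psi_i$ to $f^*$ (from $\RKHS\hookrightarrow C(\overline\D)$ since $s>\tfrac d2$), finiteness of $\sum_i(\kappa^2+\lambda_i)^{-s}$ so that the required $N$ exists, and the product estimate in the cutoff step — are routine.
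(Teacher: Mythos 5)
Your proof of the first claim (Stein extension followed by a smooth cutoff supported in $\D$, so that the Neumann-type boundary conditions defining $H^s_{\NB}(\D)$ hold vacuously) is the same as the paper's. For the second claim you take a genuinely different and more explicit route. The paper invokes the abstract duality result of Proposition \ref{prop:exist interpolant abstract} (from \cite{narcowich2006sobolev}) through Proposition \ref{prop:exist interpolant mercer kernel}: condition \eqref{eq:KL general N} is used there to show that the dual representers $v=\sum_k z_k\Phi(\cdot,x_k)$ and $v_N=\sum_k z_k\Phi_N(\cdot,x_k)$ satisfy $\|v-v_N\|_{\RKHS}\leq \gamma^{-1}\|v\|_{\RKHS}$, which yields the norm comparison of the restricted functionals and hence existence of a norm-controlled interpolant. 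You instead build $f_N^*$ by hand: truncate the eigenexpansion to $P_Nf^*$ and correct it by the minimal-norm $\RKHSN$-interpolant of the nodal residual $r$. Your estimates are correct: the trace bound on the PSD tail matrix $K_{\Phi-\Phi_N}$ plus Weyl gives $\smin(K_{\Phi_N})\geq(1-\gamma^{-2})\smin(K_\Phi)>0$, Cauchy--Schwarz gives $\|r\|_2^2\leq \gamma^{-2}\smin(K_\Phi)\|f^*\|_{\RKHS}^2$, and the two combine so that the factors of $n$ and $\smin(K_{\Phi_N})^{-1}$ cancel, leaving $\|g\|_{\RKHSN}\leq(\gamma^2-1)^{-1/2}\|f^*\|_{\RKHS}$ and a final constant depending only on $\gamma$ and the norm-equivalence constant $A$, uniformly in $N$ and $n$. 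What your approach buys is self-containedness and a quantitative lower bound on $\smin(K_{\Phi_N})$ as a byproduct; the paper only obtains invertibility of $K_{\Phi_N}$ afterwards by a soft argument from the existence of the interpolant. What the paper's approach buys is generality: Proposition \ref{prop:exist interpolant mercer kernel} is stated for general Mercer-type kernels with multiplicities and also delivers the near-best-approximation property $\|f^*-f_N^*\|_{\RKHS}\leq(1+2\hat\gamma)\operatorname{dist}_{\RKHS}(f^*,\RKHSN)$, which your construction does not immediately give (though it is not needed for this lemma). The loose ends you flag (pointwise convergence of the eigenexpansion from $\RKHS$-norm convergence, and summability of $\sum_i(\kappa^2+\lambda_i)^{-s}$ for $s>d/2$ via Weyl's law so that a valid $N$ exists) are indeed routine.
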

\begin{proof}
To prove the first claim, we first extend $f$ to $\R^d$ using Proposition \ref{prop:sobolev ext} and then control its support with a bump function. 
More precisely, Proposition \ref{prop:sobolev ext} implies the existence of a function $F\in H^s(\R^d)$ such that
\begin{align}\label{eq:KL general eqn1}
    F|_{\Omega}=f|_{\Omega},\qquad \|F\|_{H^s(\R^d)}\leq C \|f\|_{H^s(\Omega)}
\end{align}
for a constant $C$ independent of $f$. 
Let $\Omega^*$ be an open set satisfying $\Omega \ssubset \Omega^* \ssubset \D$ and let $\eta\in C^{\infty}(\R^d)$ satisfy $\eta=1$ on $\Omega$ and $\eta=0$ on $(\Omega^*)^c$. 
Then, the function  $f^*:=(\eta F)|_{\D}$ belongs to $ H^s(\D)$ and is compactly supported in $\D$, which implies that $f^*\in H^s_{\NB}(\D)$.
We also have  $f^*|_{\Omega}=F|_{\Omega}$, which together with \eqref{eq:KL general eqn1} implies that $f^*|_{\Omega}=f|_{\Omega}$. Moreover, Leibniz rule of differentiation gives that 
\begin{align*}
    \|f^*\|_{H^s(\D)}\leq C\|F\|_{H^s(\D)}&\leq C \|F\|_{H^s(\R^d)}\leq C\|f\|_{H^s(\Omega)}, 
\end{align*}
where $C$ depends on the $L^{\infty}(\D)$ norm of the derivatives of $\eta$ but is independent of $f$.

For the second claim, note that $f^*\in H_{\NB}^s(\D)=\RKHS,$ and so by Proposition \ref{prop:exist interpolant mercer kernel} there exists $f_N^*\in \RKHSN$ such that 
\begin{align*}
    f^*_N|_{X_n}=f^*|_{X_n},\qquad \|f^*_N\|_{\RKHSN}=\|f^*_N\|_{\RKHS}\leq C\|f^*\|_{\RKHS},
\end{align*}
which by norm equivalence \eqref{eq:norm equiv KL general} further implies that 
\begin{align*}
    \|f_N^*\|_{H^s(\D)} \leq C \|f_N^*\|_{\RKHSN} 
    \leq C\|f^*\|_{\RKHS}\leq C\|f^*\|_{H^s(\D)}\leq  C\|f\|_{H^s(\Omega)}. 
\end{align*}
\end{proof}

Before presenting the main result of this subsection, we remark that the existence of $f^*_N\in \RKHSN$ that interpolates $f^*$ in Lemma \ref{lemma:ex-kl-general-fN} implies invertibility of the kernel matrix $K_{\Phi_N}$. Indeed, let $Y\in\R^n$ be a vector and let $f^*\in \RKHS$ satisfy $f^*(x_i)=Y_i$ for $i=1,\ldots,n$ (simply take $f^*$ as a $\Phi$ interpolant of $Y$). Existence of $f_N^*\in \RKHSN$ satisfying $f_N^*|_{X_n}=f^*|_{X_n}$ implies that the set 
\begin{align*}
    \mathcal{H}_0:=\{g\in \RKHSN:g|_{X_n}=Y\}
\end{align*}
is nonempty, so that the optimization problem 
\begin{align*}
    \underset{g\in \RKHSN}{\operatorname{min}}\,\, \|g\|_{\RKHSN},\qquad \text{subject to } g|_{X_n}=Y
\end{align*}
has a solution $g^*$ that can be written as $g^*=\sum_{i=1}^n \alpha_i \Phi_N(\cdot,x_i)$ for some $\alpha\in\R^n$ (see e.g. \cite[proof of Theorem 3.5]{kanagawa2018gaussian}). In particular, $g^*|_{X_n}=Y$, which written in matrix form is $K_{\Phi_N}\alpha=Y$. Since such an $\alpha$ exists for any $Y\in\R^n$, the invertibility of $K_{\Phi_N}$ follows. Now we are ready to present the main result of this subsection. 
\begin{theorem}\label{thm:KL general}
    Let $f\in H^{s_0}(\Omega)$ with $s_0> \frac{d}{2}$ an integer and let $s=s_0$ in the definition of $\Phi_N$ \eqref{eq:ex-KL trun kernel}. Suppose that $\operatorname{sup}_{i\in\mathbb{N}}\|\psi_i\|_{\infty} \leq B$  and that $N$ satisfies \eqref{eq:KL general N}. Then, there exists $h_0$ independent of $N$ and $n$ such that if $\h\leq h_0$, we have by setting $\sqrt{\nugg}\asymp \h^{\so-d/2}$ that
    \begin{align*}
    \| f - \InterN^\nugg f \|_{\LtwoO}&\leq C \h^{\so}\|f\|_{H^{\so}(\Omega)},\\
        \| f - \InterN^\nugg f \|_{\LinfO}&\leq C\h^{\so-\frac{d}{2}}\|f\|_{H^{\so}(\Omega)}, 
    \end{align*}
    where $C$ is independent of $N$ and $n$.
In particular, if the design points satisfy $\h\lesssim n^{-1/d}$ (such as i.i.d. sampling from some distribution over $\Omega$), then 
\begin{align*}
    \| f - \InterN^\nugg f \|_{\LtwoO}&\leq  Cn^{-\frac{\so}{d}} \|f\|_{H^{\so}(\Omega)},\\
        \| f - \InterN^\nugg f \|_{\LinfO}&\leq  Cn^{-(\frac{\so}{d}-\frac12)} \|f\|_{H^{\so}(\Omega)},
\end{align*}
which are the best possible rates for approximating functions in $H^{\so}(\Omega)$. 
\end{theorem}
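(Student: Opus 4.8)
The plan is to reduce Theorem \ref{thm:KL general} to Theorem \ref{thm:misspecifed kriging error} Part I, applied with the ground-truth smoothness $\so$ and the kriging smoothness $\sN$ both equal to the same integer $\so$, feeding in the Sobolev extension and the truncated interpolant supplied by Lemma \ref{lemma:ex-kl-general-fN}, and then choosing the nugget $\nugg$ to balance the two resulting error terms.

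First I would verify that the hypotheses of Part I hold with constants independent of $N$ and $n$. Invertibility of $K_{\Phi_N}$ is the content of the remark preceding the theorem. For norm equivalence, $\RKHS=H^{\so}_{\NB}(\D)$ with equivalent norms, $\RKHSN$ embeds isometrically into $\RKHS$ (the truncated and full native-space norms agree on $\RKHSN$), and \eqref{eq:norm equiv KL general} gives $A^{-1}\|g\|_{\RKHSN}\le \|g\|_{H^{\so}(\D)}\le A\|g\|_{\RKHSN}$ for all $g\in\RKHSN$ with $A$ independent of $N$. Since $\so=\sN$ is an integer, the numbers $h_{\lfloor\so\rfloor,\lfloor\sN\rfloor,d,\Omega}$ and $C_{\so,\sN,d,\Omega}$ produced by Theorem \ref{thm:misspecifed kriging error} collapse to fixed quantities depending only on $\so,d,\Omega$; I set $h_0:=h_{\lfloor\so\rfloor,d,\Omega}$, which is therefore independent of $N$ and $n$.

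Next I would invoke Lemma \ref{lemma:ex-kl-general-fN}: its first claim gives $f^\ast\in H^{\so}_{\NB}(\D)=\RKHS$ with $f^\ast|_\Omega=f|_\Omega$ and $\|f^\ast\|_{H^{\so}(\D)}\le C\|f\|_{H^{\so}(\Omega)}$, and — using $\sup_i\|\psi_i\|_\infty\le B$ together with the hypothesis \eqref{eq:KL general N} on $N$ — its second claim gives $f^\ast_N\in\RKHSN$ with $f^\ast_N|_{X_n}=f^\ast|_{X_n}$ and $\|f^\ast_N\|_{H^{\so}(\D)}\le C\|f\|_{H^{\so}(\Omega)}$, the constant $C$ being independent of $N$ and $n$; this is precisely where \eqref{eq:KL general N}, which links the truncation level $N$ to the sample size $n$, enters. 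Applying Theorem \ref{thm:misspecifed kriging error} Part I to $f^\ast$ and $f^\ast_N$ (legitimate since $\RKHSN\subset\RKHS$, $f^\ast\in\RKHS$, and $f^\ast_N|_{X_n}\equiv f^\ast|_{X_n}$) and noting that $\so=\sN$ merges the two bracketed groups into one, I obtain, for $\h\le h_0$,
\[
\|f^\ast-\InterN^\nugg f^\ast\|_{\LtwoO}\le C A^2\bigl(\h^{\so}+\h^{d/2}\sqrt{\nugg}\bigr)\bigl(\|f^\ast\|_{H^{\so}(\D)}+\|f^\ast_N\|_{H^{\so}(\D)}\bigr),
\]
and the analogous $\LinfO$ bound with $\h^{\so-d/2}+\sqrt{\nugg}$ in place of $\h^{\so}+\h^{d/2}\sqrt{\nugg}$. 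Since $\InterN^\nugg$ depends only on values at $X_n\subset\Omega$ we have $\InterN^\nugg f=\InterN^\nugg f^\ast$, and since $f=f^\ast$ on $\Omega$ the left-hand sides coincide with $\|f-\InterN^\nugg f\|_{\LtwoO}$ and $\|f-\InterN^\nugg f\|_{\LinfO}$.

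It then remains to insert $\|f^\ast\|_{H^{\so}(\D)},\|f^\ast_N\|_{H^{\so}(\D)}\le C\|f\|_{H^{\so}(\Omega)}$ and the choice $\sqrt{\nugg}\asymp\h^{\so-d/2}$, which makes $\h^{d/2}\sqrt{\nugg}\asymp\h^{\so}$, yielding $\|f-\InterN^\nugg f\|_{\LtwoO}\le C\h^{\so}\|f\|_{H^{\so}(\Omega)}$ and $\|f-\InterN^\nugg f\|_{\LinfO}\le C\h^{\so-d/2}\|f\|_{H^{\so}(\Omega)}$ with $C$ depending only on $\so,d,\Omega$ (and $A$), hence independent of $N$ and $n$. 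The final display follows by substituting $\h\lesssim n^{-1/d}$, and the claimed optimality of the rates $n^{-\so/d}$ and $n^{-(\so/d-1/2)}$ is the standard lower bound for sampling recovery in $H^{\so}(\Omega)$, which can simply be cited. I expect no genuine analytic obstacle: the argument is an assembly of Lemma \ref{lemma:ex-kl-general-fN} and Theorem \ref{thm:misspecifed kriging error} Part I, and the only real care is bookkeeping — ensuring every constant ($A$ from \eqref{eq:norm equiv KL general}, the extension/interpolation constants of Lemma \ref{lemma:ex-kl-general-fN}, and $h_0$) is uniform in $N$ and $n$, and observing that hypothesis \eqref{eq:KL general N} is exactly what guarantees the $N$-uniform interpolant $f^\ast_N$.
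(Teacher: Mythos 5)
Your proposal is correct and follows essentially the same route as the paper: the paper's proof is precisely to feed the extension $f^*$ and the interpolant $f^*_N$ from Lemma \ref{lemma:ex-kl-general-fN} into Theorem \ref{thm:misspecifed kriging error} Part I and set $\sqrt{\nugg}\asymp \h^{\so-d/2}$. The additional bookkeeping you carry out (uniformity of $A$, the collapse of the constants when $\so=\sN$, and invertibility of $K_{\Phi_N}$ via the remark preceding the theorem) is exactly what the paper leaves implicit.
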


\begin{proof}
Let $f^*\in H^{\so}_{\NB}(\D)=\RKHS$ and $f^*_N\in\RKHSN$ be the functions given by Lemma \ref{lemma:ex-kl-general-fN}. The results then follow from Theorem \ref{thm:misspecifed kriging error} part I with $\sqrt{\nugg}\asymp \h^{\so-\frac{d}{2}}$.
\end{proof}

\begin{remark}
We remark that \eqref{eq:KL general N} is an essential condition that guarantees an interpolant $f_N$ exists in Theorem \ref{thm:misspecifed kriging error} part I and similar requirements have been imposed in \cite{griebel2015multiscale}. 
However, \eqref{eq:KL general N} is not explicit unless one can obtain a lower bound on $\smin(K_\Phi)$. 
Since $\Phi$ is of Mat\'ern type, we conjecture that $\smin(K_\Phi)\gtrsim \q^{2s-d}$ (see e.g. \cite[Section 12.2]{wendland2004scattered}) so that we need $N\gtrsim n^{\frac{d}{2s-d}}\q^{-d}$, which for quasi-uniform designs reduces to $N\gtrsim n^{\frac{2s}{2s-d}}$. 
On the other hand, Theorem \ref{thm:misspecifed kriging error} part II analysis could yield error bounds explicit in $N$ which can be tuned based on $n$. We shall demonstrate this idea on a one-dimensional example in the next subsection, leading to Theorem \ref{thm:KL trig}, and also for the examples in Subsections \ref{sec:wavelets} and \ref{sec:FEM}.
$\hfill \square$
\end{remark}

\subsubsection{Error analysis: trigonometric polynomials}\label{ssec:KLtrigonometric}
In the previous subsection, we illustrated an application of Theorem \ref{thm:misspecifed kriging error} part I for pure computational misspecification. In this subsection, we apply Theorem \ref{thm:misspecifed kriging error} part II to simultaneously study computational and epistemic misspecification. 

Let $\Omega=(a,b)\ssubset  (0,1)=\D$ and consider the periodic boundary condition in \eqref{eq:SPDE bounded domain}. Then, the induced covariance function takes the form 
\begin{align*}
    \Phi_N(x,\tilde{x})=1 &+\sum_{k=1}^{N} (1+4\pi^2k^2)^{-s} \cos(2\pi k x) \cos(2\pi k \tilde{x})\\
    &+\sum_{k=1}^{N} (1+4\pi^2k^2)^{-s} \sin(2\pi k x) \sin(2\pi k \tilde{x}),\qquad x,\tilde{x} \in \D
\end{align*}
for $s>\frac12$ an integer.
The RKHS associated with $\Phi_N$ is given by
\begin{align*}
    \RKHSN  &= \Big\{g=a_0+\sum_{k=1}^N a_k\cos(2\pi k\cdot)+b_k\sin(2\pi k\cdot):\\
    &\qquad \qquad \|g\|^2_{\RKHSN }:=a_0^2+\sum_{k=1}^N (a_k^2+b_k^2)(1+4\pi^2k^2)^s<\infty\Big\}\\
    &=\operatorname{span} \Big\{1,\cos(2\pi x), \sin(2\pi x),\ldots,\cos(2\pi Nx),\sin(2\pi Nx)\Big\},
\end{align*}
which is the space of trigonometric polynomials with degree less than or equal to $N$. We claim that, for $s \in \mathbb{N},$ the norm $\| \cdot \|_{\RKHSN }$ is equivalent to the norm $\| \cdot \|_{H^s(\D)}$. Indeed, for $g\in \RKHSN $ and $m \in \mathbb{N},$ 
\begin{align*}
    \|g^{(m)}\|_2^2 = \sum_{k=1}^N (4\pi^2 k^2)^m (a_k^2+b_k^2), 
\end{align*}
so that 
\begin{align*}
    \|g\|^2_{H^s(\D)}=\sum_{m=0}^s \|g^{(m)}\|_2^2 =a_0^2 + \sum_{k=1}^N(a_k^2+b_k^2)\sum_{m=0}^s (4\pi^2 k^2)^m.  
\end{align*}
Since $\sum_{m=0}^s (4\pi^2 k^2)^m \leq (1+4\pi^2k^2)^s \leq \binom{s}{\lfloor s/2 \rfloor} \sum_{m=0}^s (4\pi^2 k^2)^m$, we have 
\begin{align}\label{eq:EX-KL norm equiv}
    \|g\|_{H^s(\D)} \leq \|g\|_{\RKHSN } \leq C_s \|g\|_{H^s(\D)},
\end{align}
where $C_s$ is a universal constant depending only on $s$. 

The following lemma gives a simple condition for invertibility of the kernel matrix $K_{\Phi_N}$, which is needed in Theorem \ref{thm:misspecifed kriging error}.
\begin{lemma}\label{lemma:ex-KL1d-PD}
    Let $X_n=\{x_1,\ldots,x_n\}$ be distinct points. If $n\leq 2N+1$, then the kernel matrix $K_{\Phi_N}$ is invertible. 
\end{lemma}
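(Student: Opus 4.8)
The plan is to show that $K_{\Phi_N}$ is strictly positive definite on $\R^n$ when $n \le 2N+1$, by exhibiting $\Phi_N$ as a genuine (finite-dimensional) Mercer kernel and then arguing that its $n \times n$ Gram matrix at distinct points is nonsingular. Write $\Phi_N(x,\tilde x) = \sum_{j=0}^{2N} \phi_j(x)\phi_j(\tilde x)$, where $\phi_0 \equiv 1$ and, for $k = 1,\ldots,N$, the pair $\phi_{2k-1}(x) = (1+4\pi^2k^2)^{-s/2}\cos(2\pi k x)$ and $\phi_{2k}(x) = (1+4\pi^2 k^2)^{-s/2}\sin(2\pi k x)$. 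This exhibits $K_{\Phi_N} = V V^T$ where $V \in \R^{n \times (2N+1)}$ has entries $V_{ij} = \phi_j(x_i)$. Hence $K_{\Phi_N}$ is invertible if and only if $V$ has full row rank $n$, which is possible only if $n \le 2N+1$; the point is to show this rank condition actually holds when the $x_i$ are distinct.

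First I would reduce to a statement about trigonometric polynomials: $V$ fails to have full row rank exactly when there is a nonzero $c \in \R^n$ with $c^T V = 0$, i.e. $\sum_{i=1}^n c_i \phi_j(x_i) = 0$ for every $j$. Since the $\phi_j$ (up to the positive constants) span the space of real trigonometric polynomials of degree $\le N$, this says the linear functional $c \mapsto \sum_i c_i\, p(x_i)$ annihilates every such polynomial $p$. So it suffices to show: if $x_1,\ldots,x_n$ are distinct points in $(0,1)$ with $n \le 2N+1$, then the only $c \in \R^n$ with $\sum_{i=1}^n c_i p(x_i) = 0$ for all trigonometric polynomials $p$ of degree $\le N$ is $c = 0$. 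The standard way to see this is interpolation: the space of trigonometric polynomials of degree $\le N$ has dimension $2N+1 \ge n$, and distinct nodes on the circle are unisolvent for trigonometric interpolation — given any distinct $x_1,\ldots,x_n$ and any target values, one can find a trigonometric polynomial of degree $\le N$ hitting those values (e.g. via the classical result that the $n \le 2N+1$ functions $1,\cos 2\pi x,\sin 2\pi x,\ldots$ restricted to distinct nodes form a Chebyshev-type system, or concretely by building, for each $m$, a polynomial $p_m$ of degree $\le N$ with $p_m(x_i) = \delta_{im}$ using products of factors $\sin(\pi(x - x_j))$). Fixing $m$ and plugging such a $p_m$ into the annihilation identity yields $c_m = 0$; since $m$ was arbitrary, $c = 0$.

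Concretely, for the unisolvence step I would take the node-separating polynomial
\begin{align*}
    p_m(x) = \prod_{\substack{j=1 \\ j \ne m}}^{n} \frac{\sin\!\big(\pi (x - x_j)\big)}{\sin\!\big(\pi (x_m - x_j)\big)},
\end{align*}
which is a trigonometric polynomial of degree $\le \lfloor (n-1)/2\rfloor \le N$ (since $n \le 2N+1$), is well-defined because the $x_i$ are distinct modulo $1$, and satisfies $p_m(x_i) = \delta_{im}$. One small subtlety is that $\prod_{j\ne m}\sin(\pi(x-x_j))$ is a trigonometric polynomial of degree exactly $(n-1)/2$ only when $n-1$ is even; when $n-1$ is odd it lies in degree $\le \lceil (n-1)/2 \rceil \le N$ as well, so $p_m \in \RKHSN$ either way. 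Feeding $p_m$ into $\sum_i c_i p_m(x_i) = 0$ gives $c_m = 0$ for each $m$, hence $c = 0$, so $V$ has full row rank $n$ and $K_{\Phi_N} = VV^T$ is invertible.

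\textbf{Main obstacle.} The only nonroutine point is the unisolvence / Chebyshev-system claim for trigonometric polynomials at arbitrary distinct nodes — i.e. verifying that the explicit $p_m$ above genuinely lies in $\RKHSN$ (degree bookkeeping: $\lfloor (n-1)/2 \rfloor \le N$ whenever $n \le 2N+1$) and that $\prod_{j \ne m} \sin(\pi(x-x_j))$ expands into cosines and sines of frequency at most $(n-1)/2$. Everything else — the factorization $K_{\Phi_N} = VV^T$, the equivalence of invertibility with full row rank, and the translation of $c^TV = 0$ into annihilation of trigonometric polynomials — is elementary linear algebra.
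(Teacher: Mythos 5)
Your overall strategy is the same as the paper's: reduce invertibility of $K_{\Phi_N}$ to the existence of cardinal trigonometric interpolants $p_m\in\RKHSN$ with $p_m(x_i)=\delta_{im}$ (the paper invokes classical trigonometric interpolation together with its Lemma \ref{lemma:positive definiteness}; your $K_{\Phi_N}=VV^T$ full-row-rank argument is the same linear algebra). However, your explicit construction of the cardinal functions has a genuine gap in the parity bookkeeping. The factor $\sin(\pi(x-x_j))$ is \emph{anti}-periodic with period $1$: $\sin(\pi(x+1-x_j))=-\sin(\pi(x-x_j))$. Hence a product of $n-1$ such factors is a $1$-periodic trigonometric polynomial (of degree $(n-1)/2$) only when $n-1$ is even. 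When $n-1$ is odd, the product contains half-integer frequencies ($\cos(\pi(2k+1)x)$, $\sin(\pi(2k+1)x)$), is not $1$-periodic, and therefore does not lie in $\RKHSN=\operatorname{span}\{1,\cos(2\pi kx),\sin(2\pi kx):k\le N\}$ at all — your claim that ``$p_m\in\RKHSN$ either way'' is false. A concrete counterexample is $n=2$: $\sin(\pi(x-x_2))=\sin(\pi x)\cos(\pi x_2)-\cos(\pi x)\sin(\pi x_2)$ is a period-$2$ function, not a degree-$\le N$ trigonometric polynomial.

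The gap is fixable by the standard device: when $n$ is even, multiply by one additional factor $\sin(\pi(x-a))/\sin(\pi(x_m-a))$ for an auxiliary point $a$ distinct from all $x_j$ modulo $1$ (equivalently, pad the node set up to an odd cardinality, e.g.\ $2N+1$, and interpolate $\delta_{im}$ there). The resulting product has an even number of sine factors, hence is a genuine $1$-periodic trigonometric polynomial of degree $n/2\le N$ (using that $n$ even and $n\le 2N+1$ force $n\le 2N$), and still satisfies $p_m(x_i)=\delta_{im}$. With that correction your argument goes through and matches the classical unisolvence result the paper cites.
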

\begin{proof}
    Classical trigonometric interpolation results (see e.g. \cite{zygmund2002trigonometric}) imply that if $n\leq 2N+1$, then there exists $g_i\in \RKHSN$ such that $g_i(x_j)=\delta_{ij}$ for each $i$. The invertibility of the kernel matrix follows from Lemma \ref{lemma:positive definiteness}. 
\end{proof}

To apply Theorem part II, we will use the following approximation theory result characterizing how well a given function can be approximated by functions in $\RKHSN.$
\begin{lemma}\label{lemma:ex-KL1d-fN and oneN}
    Suppose $f^*\in C^{s_0}(\overline{D})$ with $s_0$ an integer and $f^*(0)=f^*(1)$. There exists $f_N^*\in\RKHSN$ such that 
    \begin{align*}
        \|f^*-f^*_N\|_{L^{\infty}(\D)} &\leq C (\log N)N^{-s_0} \|f^*\|_{C^{\so}(\overline{D})},\\
        \|f_N^*\|_{H^s(\D)}&\leq CN^{(s-s_0)\vee 0} \|f^*\|_{C^{\so}(\overline{D})},
    \end{align*}
    where $C$ is a constant independent of $N$. 
\end{lemma}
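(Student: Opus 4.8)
The plan is to obtain the two bounds by classical Fourier-analytic approximation theory, specifically by invoking Jackson-type estimates for the partial sums of the Fourier series together with an inverse (Bernstein-type) estimate that is valid on the finite-dimensional space of trigonometric polynomials $\RKHSN$. First I would set up the periodic extension: since $f^*\in C^{s_0}(\overline{D})$ with $f^*(0)=f^*(1)$, the natural thing is to ask whether one gets a genuinely $C^{s_0}$ periodic function on the torus; in general matching only the endpoint values is not enough to control all derivatives, so I expect the intended reading is that the construction of $f^*_N$ (its Fourier partial sum, or a de la Vall\'ee Poussin–type mollified partial sum) is what is being analyzed, and the $C^{s_0}(\overline{D})$ norm on one period suffices for the Jackson bound because the relevant quantity is the decay of the Fourier coefficients of $f^*$ restricted to $[0,1]$, which is governed by $\|f^*\|_{C^{s_0}}$ up to boundary terms that vanish by $f^*(0)=f^*(1)$.

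Second, for the $L^\infty$ bound I would take $f^*_N$ to be the $N$-th Fourier partial sum $S_N f^*$, which indeed lies in $\RKHSN$ since that space is exactly $\operatorname{span}\{1,\cos(2\pi x),\sin(2\pi x),\dots,\cos(2\pi N x),\sin(2\pi N x)\}$. The uniform error of $S_N f^*$ is controlled by the Lebesgue constant times the best approximation error: $\|f^*-S_N f^*\|_{L^\infty}\le (1+\|D_N\|_{L^1})\,E_N(f^*)_\infty$, where $\|D_N\|_{L^1}\asymp \log N$ is the Lebesgue constant of the Dirichlet kernel, and Jackson's theorem gives $E_N(f^*)_\infty\lesssim N^{-s_0}\|f^{*(s_0)}\|_\infty\lesssim N^{-s_0}\|f^*\|_{C^{s_0}(\overline{D})}$. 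Multiplying yields the claimed $(\log N)N^{-s_0}$ rate. (Alternatively one can avoid the $\log N$ using a Fej\'er or de la Vall\'ee Poussin mean, but since the statement explicitly carries the $\log N$ factor I would stick with the plain partial sum.)

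Third, for the Sobolev bound on $\|f^*_N\|_{H^s(\D)}$ I would split into the two cases in the exponent $(s-s_0)\vee 0$. If $s\le s_0$, then $f^*\in C^{s_0}\subset H^s(\D)$ (on one period, using periodicity), and $S_N$ is a bounded projection on $H^s(\D)$ with norm bounded independently of $N$ for the periodic Sobolev scale — in Fourier terms $\|S_N f^*\|_{H^s}^2=\sum_{|k|\le N}(1+4\pi^2k^2)^s|\widehat{f^*}(k)|^2\le \|f^*\|_{H^s}^2\lesssim \|f^*\|_{C^{s_0}}^2$, giving the bound with exponent $0$. If $s>s_0$, I would use the inverse estimate on trigonometric polynomials: for $g\in\RKHSN$, $\|g\|_{H^s}\lesssim N^{s-s_0}\|g\|_{H^{s_0}}$ (which follows coefficient-wise from $(1+4\pi^2k^2)^{s}\le (1+4\pi^2N^2)^{s-s_0}(1+4\pi^2k^2)^{s_0}$ for $|k|\le N$), applied to $g=S_N f^*$, and then $\|S_N f^*\|_{H^{s_0}}\le\|f^*\|_{H^{s_0}}\lesssim\|f^*\|_{C^{s_0}(\overline{D})}$. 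Combining the two cases gives the factor $N^{(s-s_0)\vee 0}$, and the norm-equivalence constants from \eqref{eq:EX-KL norm equiv} are absorbed into $C$.

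The main obstacle I anticipate is the bookkeeping around periodicity and the $C^{s_0}(\overline{D})$ hypothesis: one has to be careful that $f^*$, though only assumed $C^{s_0}$ on the closed interval with matching endpoint values, still produces Fourier coefficients decaying like $|k|^{-s_0}$ — this requires either (i) integrating by parts $s_0$ times and checking the boundary terms telescope to zero using $f^*(0)=f^*(1)$ (which handles the zeroth-order term but one may need more if higher derivatives don't match), or (ii) interpreting the lemma as implicitly assuming enough periodic regularity, or (iii) replacing $S_N$ by a smoothing operator that tames the Gibbs-type boundary effect. I would resolve this by working with the periodic best-approximation error $E_N(f^*)_\infty$ directly via Jackson's theorem for periodic functions (where the modulus of continuity of order $s_0$ is what enters, and this is controlled by $\|f^*\|_{C^{s_0}}$ on a period when the function extends continuously and periodically), so that no endpoint-derivative matching beyond $f^*(0)=f^*(1)$ is actually needed for the stated rates; the $\log N$ in the statement is precisely the price of using $S_N$ rather than a quasi-interpolant, so the argument is internally consistent.
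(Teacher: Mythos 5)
Your proposal matches the paper's proof essentially verbatim: the paper also takes $f_N^*$ to be the $N$-th Fourier partial sum, bounds the uniform error by the Lebesgue constant ($\asymp \log N$, via Rivlin) times the Jackson best-approximation error $\lesssim N^{-s_0}\|f^*\|_{C^{\so}(\overline{\D})}$, and obtains the $H^s$ bound by the same coefficient-wise comparison $(1+4\pi^2k^2)^{s}\leq C N^{2(s-s_0)\vee 0}(1+4\pi^2k^2)^{s_0}$ for $k\leq N$ together with the norm equivalence \eqref{eq:EX-KL norm equiv}. Your caveat about periodicity is well taken---the hypothesis $f^*(0)=f^*(1)$ alone only makes the periodic extension continuous, so for $s_0\ge 2$ the Jackson rate $N^{-s_0}$ actually requires matching of derivatives up to order $s_0-1$ at the endpoints, a point the paper glosses over; it is harmless in the only application (Theorem \ref{thm:KL trig}), where $f^*=\eta F$ vanishes identically near the endpoints.
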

\begin{proof}
    Since by assumption $f^*\in C^{s_0}(\overline{\D})$ and is periodic, \cite[Theorem 4.1.2, Lemma 4.1.4(i)]{dai2013approximation} implies that 
    \begin{align*}
        \underset{T\in\RKHSN }{\operatorname{inf}}\,\, \|T-f^*\|_{L^{\infty}(\D)} \leq 
        C N^{-s_0} \|(f^*)^{(s_0)}\|_{L^{\infty}(\D)}
        \leq CN^{-s_0}\|f^*\|_{C^{s_0}(\overline{\D})},
    \end{align*}
    where $C$ is independent of $N$. 
    Take $f^*_N$ to be the $N$-th partial sum of the Fourier series of $f^*$, i.e., 
    \begin{align*}
        f^*_N=a_0 +\sum_{k=1}^N a_k\cos(2\pi k\cdot)+b_k\sin(2\pi k\cdot),\quad a_k=\langle f^*,\cos(2\pi k\cdot)\rangle, b_k=\langle f^*,\sin(2\pi k\cdot)\rangle.
    \end{align*}
    Then, 
    \cite[Theorem 2.2]{rivlin1981introduction} implies that 
    \begin{equation*}
    \begin{aligned}
        \|f^*-f^*_N\|_{L^{\infty}(\D)} \leq C (\log N) \underset{T\in\RKHSN }{\operatorname{inf}}\,\, \|T-f^*\|_{L^{\infty}(\D)} 
        \leq C (\log N)N^{-s_0} \|f^*\|_{C^{\so}(\overline{\D})}.
    \end{aligned}
    \end{equation*}
    Moreover, we have by norm equivalence \eqref{eq:EX-KL norm equiv} that 
    \begin{align*}
        \|f_N^*\|^2_{H^s(\D)}\leq C\|f_N^*\|^2_{\RKHSN} &= C\Big[ a_0^2 + \sum_{k=1}^N (a_k^2+b_k^2)(1+4\pi^2k^2)^s\Big]\\
        &= C\Big[a_0^2 + \sum_{k=1}^N (a_k^2+b_k^2)(1+4\pi^2k^2)^{s_0}  (1+4\pi^2k^2)^{s-s_0}\Big]\\
        &\leq CN^{(2s-2s_0)\vee 0} \Big[a_0^2 + \sum_{k=1}^{\infty} (a_k^2+b_k^2)(1+4\pi^2k^2)^{s_0} \Big]\\
        &\leq CN^{(2s-2s_0)\vee 0} \|f^*\|^2_{H^{\so}(\D)}\leq CN^{(2s-2s_0)\vee 0} \|f^*\|^2_{C^{\so}(\overline{\D})}.
    \end{align*}
\end{proof}

Now we are ready to state the main result of this subsection. 
\begin{theorem}\label{thm:KL trig}
    Suppose $\Omega=(a,b)\ssubset (0,1)=\D$ and $f\in C^{\so}(\overline{\Omega})$ with $s_0$ an integer. 
    Suppose $K_{\Phi_N}$ is invertible. Then, there exists $h_0$ independent of $N$ and $n$ such that if $\h\leq h_0$, we have by setting  $\sqrt{\lambda}\asymp \h^{s-1/2}$ that 
    \begin{align*}
        \|f - \InterN^\nugg f\|_{L^2(\Omega)} &\leq  C\big[\sqrt{n}\h^{\frac12}(\log N)N^{-\so}+\h^sN^{(s-\so)\vee 0}\big]\|f\|_{C^{\so}(\overline{\Omega})},\\
        \|f - \InterN^\nugg f\|_{L^{\infty}(\Omega)} &\leq  C\big[\sqrt{n}(\log N)N^{-\so}+\h^{s-\frac12} N^{(s-\so)\vee 0}\big]\|f\|_{C^{\so}(\overline{\Omega})},
    \end{align*}
    where $C$ is independent of $N$ and $n$.
    If the design points satisfy $\h\lesssim n^{-1}$, setting $2N+1=n$ so that invertibility is ensured as in Lemma \ref{lemma:ex-KL1d-PD}, we have 
    \begin{align*}
        \|f - \InterN^\nugg f\|_{L^2(\Omega)} & \leq C (\log n) n^{-(s_0\wedge s)}\|f\|_{C^{\so}(\overline{\Omega})},\\
         \|f - \InterN^\nugg f\|_{L^{\infty}(\Omega)} & \leq C (\log n) n^{-(s_0\wedge s-\frac12)} \|f\|_{C^{\so}(\overline{\Omega})}.
    \end{align*}
\end{theorem}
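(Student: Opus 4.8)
The plan is to invoke Theorem~\ref{thm:misspecifed kriging error} Part~II with $s_N=s$, exploiting that the norm equivalence \eqref{eq:EX-KL norm equiv} has constant $C_s$ independent of $N$ (so the role of $A_N$ is played by $C_s$) and that, since $s$ is a fixed integer, the sampling-theorem constant and its fill-distance threshold depend only on $\lfloor s\rfloor=s$ and are therefore fixed. First I would pass to a global representative of $f$: since $\Omega=(a,b)\ssubset(0,1)=\D$ there is a positive gap between $\overline{\Omega}$ and $\{0,1\}$, so I can extend $f$ to $f^\ast\in C^{s_0}(\overline{\D})$ that vanishes identically near the endpoints $0$ and $1$ — for instance a Sobolev/Whitney extension multiplied by a $C^\infty$ cutoff supported away from $\{0,1\}$ — which makes $f^\ast$ a $C^{s_0}$ periodic function (all one-sided derivatives at $0$ and $1$ vanish, so $f^\ast(0)=f^\ast(1)$ trivially) with $\|f^\ast\|_{C^{s_0}(\overline{\D})}\lesssim\|f\|_{C^{s_0}(\overline{\Omega})}$, the constant absolute. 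Because $X_n\subset\Omega$, the interpolant $\InterN^\nugg f^\ast$ depends only on $f^\ast|_{X_n}=f|_{X_n}$, and $f^\ast=f$ on $\Omega$, so it suffices to bound $\|f^\ast-\InterN^\nugg f^\ast\|_{L^2(\Omega)}$ and $\|f^\ast-\InterN^\nugg f^\ast\|_{L^\infty(\Omega)}$.

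Next I would apply Theorem~\ref{thm:misspecifed kriging error} Part~II to $f^\ast\in L^\infty(\D)$ with the trigonometric polynomial $f_N^\ast\in\RKHSN$ supplied by Lemma~\ref{lemma:ex-KL1d-fN and oneN}; this is legitimate because $s\ge1>d/2$, $K_{\Phi_N}$ is assumed invertible, and $\h\le h_{\lfloor s\rfloor,d,\Omega}=:h_0$, a threshold independent of $N$ and $n$. With $d=1$ the $L^2$ bound reads
\[
\|f-\InterN^\nugg f\|_{L^2(\Omega)}\le C_s\sqrt n\Big[\frac{\h^{s}}{\sqrt{\smin(K_{\Phi_N})+\nugg}}+\h^{1/2}\Big]\|f_N^\ast-f^\ast\|_{L^\infty(\Omega)}+C_s\big(\h^{s}+\h^{1/2}\sqrt\nugg\big)\|f_N^\ast\|_{H^s(\D)},
\]
and analogously for the $L^\infty$ norm. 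The decisive step, following the third remark after Theorem~\ref{thm:misspecifed kriging error}, is the choice $\sqrt\nugg\asymp\h^{s-1/2}$: then $\h^{s}/\sqrt{\smin(K_{\Phi_N})+\nugg}\le\h^{s}/\sqrt\nugg\asymp\h^{1/2}$ (and $\h^{s-1/2}/\sqrt{\smin(K_{\Phi_N})+\nugg}\lesssim1$ in the $L^\infty$ version), so the unknown smallest eigenvalue drops out entirely, while $\h^{1/2}\sqrt\nugg\asymp\h^{s}$. Substituting $\|f_N^\ast-f^\ast\|_{L^\infty(\Omega)}\le\|f_N^\ast-f^\ast\|_{L^\infty(\D)}\lesssim(\log N)N^{-s_0}\|f\|_{C^{s_0}(\overline{\Omega})}$ and $\|f_N^\ast\|_{H^s(\D)}\lesssim N^{(s-s_0)\vee0}\|f\|_{C^{s_0}(\overline{\Omega})}$ from Lemma~\ref{lemma:ex-KL1d-fN and oneN} yields the first pair of displayed estimates.

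For the final two estimates I would specialize to designs with $\h\lesssim n^{-1}$ — hence $\h\asymp n^{-1}$, since $n$ points in a bounded interval always have $\h\gtrsim n^{-1}$ — and set $2N+1=n$, so $N\asymp n$ and $K_{\Phi_N}$ is invertible by Lemma~\ref{lemma:ex-KL1d-PD}. Then $\sqrt n\,\h^{1/2}(\log N)N^{-s_0}\asymp(\log n)n^{-s_0}$ and $\h^{s}N^{(s-s_0)\vee0}\asymp n^{-(s\wedge s_0)}$; since $n^{-s_0}\le n^{-(s\wedge s_0)}$, their sum is $\lesssim(\log n)n^{-(s\wedge s_0)}$, which is the claimed $L^2$ rate. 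The $L^\infty$ rate follows in the same way after carrying the extra $\h^{-1/2}\asymp n^{1/2}$ factor through both terms.

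The computation is essentially bookkeeping; the one genuinely delicate issue is keeping every $N$-dependent constant — the norm-equivalence constant, the sampling-theorem constant, and any quantity entering the error bound — uniform in $N$, and this is exactly why the proof uses Part~II rather than Part~I: the nugget choice $\sqrt\nugg\asymp\h^{s-1/2}$ sidesteps the need to bound $\smin(K_{\Phi_N})$ from below (the hard part otherwise, cf. the remark after Theorem~\ref{thm:KL general}), \eqref{eq:EX-KL norm equiv} controls $A_N$, and the integrality of $s$ fixes the sampling constant and the threshold $h_0$; constructing the periodic $C^{s_0}$-extension $f^\ast$ with a controlled norm is routine given $\Omega\ssubset\D$.
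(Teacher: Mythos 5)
Your proposal is correct and follows essentially the same route as the paper: extend $f$ to a $C^{s_0}$ function $f^*$ on $\overline{\D}$ that vanishes near the endpoints (Whitney-type extension times a cutoff, so $f^*$ is periodic with controlled norm), then apply Theorem \ref{thm:misspecifed kriging error} Part II with the Fourier partial sum $f_N^*$ from Lemma \ref{lemma:ex-KL1d-fN and oneN} and the nugget choice $\sqrt{\nugg}\asymp \h^{s-1/2}$ to absorb $\smin(K_{\Phi_N})$. Your bookkeeping of the constants (norm equivalence uniform in $N$, sampling threshold depending only on the integer $s$) and of the final rates with $\h\asymp n^{-1}$, $N\asymp n$ matches the paper's argument, which it in fact spells out in more detail than the paper does.
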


\begin{proof}
We first extend $f$ to be a periodic function $f^*$ defined over $\D$. 
By \cite[Theorem 4]{stein1970singular}, there exists $F\in C^{s_0}(\R)$ such that 
\begin{align*}
    F|_{\Omega}=f|_{\Omega},\qquad \|F\|_{C^{\so}(\R)} \leq C \|f\|_{C^{\so}(\overline{\Omega})},
\end{align*}
where $C$ is independent of $f$. 
Next, let $I$ be an interval satisfying $\Omega \ssubset  I \ssubset  \D$ and $\eta\in C^{\infty}(\R)$ satisfy $\eta=1$ on $\Omega$ and $\eta=0$ on $I^c$. 
The function $f^*=(\eta F)|_{\overline{\D}}$ belongs to $C^{s_0}(\overline{\D})$ and satisfies
\begin{align*}
    f^*|_{\Omega}=F|_{\Omega}=f|_{\Omega},\quad \, \,\, \, f^*(0)=f^*(1)=0, \quad \, \, \, \, 
    \|f^*\|_{C^{\so}(\overline{\D})}\leq C\|F\|_{C^{\so}(\R)}\leq C \|f\|_{C^{\so}(\overline{\Omega})}.
\end{align*}
The results follow from Theorem \ref{thm:misspecifed kriging error} part II applied to $f^*$ and the function $f_N^*\in\RKHSN$ from Lemma \ref{lemma:ex-KL1d-fN and oneN}. 
\end{proof}

\subsection{Wavelet-based multiscale kernels}\label{sec:wavelets}

\subsubsection{Background}
In this subsection, we consider a wavelet-based multiscale kernel that can introduce sparsity structures into the kernel matrix for computational speedup. Formally, we consider kernels of the form \begin{align}\label{eq:wavelet kernel}
    \Phi_N(x,\tilde{x}) :=\sum_{k\in\Z^d}\phi_k(x)\phi_k(\tilde{x})+\sum_{j=0}^{N} 2^{-2js} \sum_{\iota\in\I}\sum_{k\in \mathbb{Z}^d} \psi_{jk}^\iota(x)\psi_{jk}^\iota(\tilde{x}), \quad x,\tilde{x}\in \R^d,
\end{align}
where $\{\phi_k,\psi_{jk}^\iota\}_{j\in\mathbb{N},k\in \Z^d,\iota\in \I}$ form an orthonormal basis of $L^2(\R^d)$ and $N \ge 1$ is a truncation parameter. Importantly, the $\phi_k$'s and $\psi_{jk}^\iota$'s can be chosen to have compact support whose size shrinks as $j$ increases. Consequently, points $x,\tilde{x}$ that are far apart would give a zero correlation $\Phi_N(x,\tilde{x})=0$, leading to a sparse covariance matrix. We remark that \eqref{eq:wavelet kernel} is an analog of the multiscale kernel proposed by \cite{opfer2006multiscale,griebel2015multiscale} which instead only uses the scaling function; see also their extensions to general metric measure spaces \cite{griebel2018regularized} and other wavelet-based kernels \cite{bolin2013comparison}. In the following, we analyze the approximate kriging error of using \eqref{eq:wavelet kernel} as the kernel with a different proof strategy from \cite{griebel2015multiscale}.

To describe the class of kernels \eqref{eq:wavelet kernel} in more detail, we now provide the necessary background on wavelets. Let $\phi$ and $\{\psi^{\iota}\}_{\iota\in \I}$, where $\I$ an index set of cardinality $2^d-1$, be the scaling and wavelet functions that satisfy
\begin{enumerate}
    \item[(W1)] $\phi$ and $\{\psi^\iota\}_{\iota\in \I}$ are compactly supported; 
    \item[(W2)] $\phi$ and $\{\psi^\iota\}_{\iota\in \I}$ belong to $\C^r(\mathbb{R}^d)$ for some integer $r$; and
    \item[(W3)] There exists $\delta>0$ and $\frac{d}{2}<s<r$ such that $\int_{|\xi|<\delta} |\widehat{\psi}^\iota(\xi)|^2|\xi|^{-2s}d\xi<\infty$ for all $\iota\in\I$.
    
\end{enumerate}
For instance, the Daubechies wavelets \cite[Section 4.2.3]{gine2021mathematical} satisfy these requirements. 
We recall that the scaling and wavelet functions form a multiresolution analysis of $L^2(\mathbb{R}^d)$: there exist spaces $\{V_i\}_{i=0}^{\infty}$ such that 
\begin{align*}
    V_0\subset V_1 \subset \cdots \subset L^2(\mathbb{R}^d)
\end{align*}
with $\bigl\{\phi_{jk}=2^{jd/2}\phi(2^j\cdot-k)\bigr\}_{k\in\mathbb{Z}^d}$ forming an orthonormal basis for $V_j$ and $\{\psi_{jk}^\iota=2^{jd/2}\psi^\iota(2^j\cdot-k)\}_{k\in\mathbb{Z}^d,\iota\in\I}$ an orthonormal basis for $V_{j+1}\ominus V_j$ (the orthogonal complement of $V_j$ in $V_{j+1}$).

Notice that the $\psi_{jk}^\iota$'s are dilation and translations of the function $\psi^\iota$ whose support size decreases exponentially as $j$ increases. Hence, as $j$ grows these functions represent finer and finer structures of the function space, which motivates referring to \eqref{eq:wavelet kernel} as a multiscale kernel.  
For $s < r$, \cite[p.48 Theorem 8]{meyer1992wavelets} implies (by $r-$regularity ensured by (W1) and (W2)) that, for some constant $A>1,$ 
\begin{align}\label{eq:wavelet sobolev norm}
    A^{-1}\|f\|_{H^s(\mathbb{R}^d)}^2 \leq \sum_{k\in \Z^d}|\langle f,\phi_k\rangle|^2 +\sum_{j\geq 0}\sum_{\iota\in \I}\sum_{k\in\mathbb{Z}^d} |\langle f,\psi_{jk}^\iota\rangle|^22^{2js} \leq A  \|f\|_{H^s(\mathbb{R}^d)}^2 .
\end{align}
Importantly, \eqref{eq:wavelet sobolev norm} allows us to establish norm equivalence between a Sobolev space and the RKHS of $\Phi_N$, given by
\begin{align*}
    \RKHSN  = \bigg\{g=\sum_{k\in\Z^d}a_k\phi_k+\sum_{j=0}^N&\sum_{\iota\in\I}\sum_{k\in\mathbb{Z}^d} b_{jk}^\iota\psi_{jk}\iota: \\&\|g\|^2_{\RKHSN }:=\sum_{k\in \Z^d}a_k^2+\sum_{j=0}^N2^{2js}\sum_{\iota\in \I}\sum_{k\in\mathbb{Z}^d}|b_{jk}^\iota|^2<\infty \bigg\}.
\end{align*}
It then follows from \eqref{eq:wavelet sobolev norm} that, for all $g\in \RKHSN,$
\begin{align}\label{eq:nex-wavvelet-norm equivalence}
    A^{-1}\|g\|_{H^s(\R^d)}\leq \|g\|_{\RKHSN} \leq A \|g\|_{H^s(\R^d)},
\end{align}
and so $\RKHSN\subset H^s(\R^d).$ Notice that here $A$ is independent of $N$.

\subsubsection{Error analysis}

We shall apply Theorem \ref{thm:misspecifed kriging error} part II for the error analysis. As in the previous subsection, we will need to find a good approximation in $\RKHSN$ of a given function. This is covered by the following lemma.  
\begin{lemma}\label{lemma:ex-wavelet:fn onen}
    Let $f^*\in H^{\so}(\R^d)$. There exists $f_N^*\in \RKHSN$ such that,  for all $s>\frac{d}{2},$
    \begin{align*}
        \|f^*-f_N^*\|_{L^{\infty}(\R^d)}& \leq C2^{-N(s\wedge\so-\frac{d}{2})}\|f^*\|_{H^{\so}(\R^d)},\\
        \|f_N^*\|_{H^{s}(\R^d)}&\leq C 2^{N(s-\so)\vee 0}\|f^*\|_{H^{\so}(\R^d)},
    \end{align*}
    where $C$ is a constant independent of $N$. 
\end{lemma}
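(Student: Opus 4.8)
\textbf{Proof plan for Lemma \ref{lemma:ex-wavelet:fn onen}.}

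The plan is to take $f_N^*$ to be the truncation of the wavelet expansion of $f^*$ at resolution level $N$, namely
\begin{align*}
    f_N^* := \sum_{k\in\Z^d}\langle f^*,\phi_k\rangle \phi_k + \sum_{j=0}^N\sum_{\iota\in\I}\sum_{k\in\Z^d}\langle f^*,\psi_{jk}^\iota\rangle \psi_{jk}^\iota,
\end{align*}
which by definition belongs to $\RKHSN$. The error $f^*-f_N^*$ is then the tail $\sum_{j>N}\sum_{\iota\in\I}\sum_{k\in\Z^d}\langle f^*,\psi_{jk}^\iota\rangle\psi_{jk}^\iota$, and the two bounds to be proved are (i) an $L^\infty$ decay of this tail and (ii) a growth bound on $\|f_N^*\|_{H^s(\R^d)}$.

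For the second bound, I would work entirely through the norm equivalence \eqref{eq:nex-wavvelet-norm equivalence} (and its $H^{\so}$ analogue coming from \eqref{eq:wavelet sobolev norm}). Writing $c_{jk}^\iota := \langle f^*,\psi_{jk}^\iota\rangle$, one has $\|f_N^*\|_{\RKHSN}^2 \asymp \sum_k |\langle f^*,\phi_k\rangle|^2 + \sum_{j=0}^N 2^{2js}\sum_{\iota,k}|c_{jk}^\iota|^2$. When $s\le \so$ this is bounded by $\sum_k|\langle f^*,\phi_k\rangle|^2 + \sum_{j=0}^N 2^{2j\so}\sum_{\iota,k}|c_{jk}^\iota|^2 \lesssim \|f^*\|_{H^{\so}(\R^d)}^2$, giving the exponent $0$. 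When $s>\so$, I factor $2^{2js} = 2^{2j(s-\so)}2^{2j\so} \le 2^{2N(s-\so)}2^{2j\so}$ on the range $j\le N$, pull out $2^{2N(s-\so)}$, and bound the remainder by $\|f^*\|_{H^{\so}(\R^d)}^2$ via \eqref{eq:wavelet sobolev norm}. This yields $\|f_N^*\|_{H^s(\R^d)} \lesssim 2^{N(s-\so)\vee 0}\|f^*\|_{H^{\so}(\R^d)}$, with the constant absorbing $A$ from the norm equivalences, which is $N$-independent.

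For the $L^\infty$ bound on the tail, the key point is that at fixed scale $j$ the functions $\psi_{jk}^\iota$ have disjoint-ish (finitely overlapping, by (W1) compact support) supports and $\|\psi_{jk}^\iota\|_\infty = 2^{jd/2}\|\psi^\iota\|_\infty \lesssim 2^{jd/2}$. Hence at each point $x$, $\big|\sum_{k}c_{jk}^\iota\psi_{jk}^\iota(x)\big| \lesssim 2^{jd/2}\max_k|c_{jk}^\iota|$, and summing over $j>N$ and $\iota\in\I$ gives $\|f^*-f_N^*\|_{L^\infty(\R^d)} \lesssim \sum_{j>N} 2^{jd/2}\big(\sum_{\iota,k}|c_{jk}^\iota|^2\big)^{1/2}$ after passing from the $\max$ to the $\ell^2$ norm at the cost of nothing (since $\max_k \le (\sum_k|\cdot|^2)^{1/2}$). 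Now insert and remove $2^{j\so}$: each term is $2^{j(d/2-\so)}\cdot 2^{j\so}\big(\sum_{\iota,k}|c_{jk}^\iota|^2\big)^{1/2}$. Since $\so>d/2$ the geometric series $\sum_{j>N}2^{j(d/2-\so)}$ converges and is dominated by its first term $\asymp 2^{N(d/2-\so)} = 2^{-N(\so-d/2)}$, while $\sup_j 2^{j\so}\big(\sum_{\iota,k}|c_{jk}^\iota|^2\big)^{1/2} \lesssim \|f^*\|_{H^{\so}(\R^d)}$ by \eqref{eq:wavelet sobolev norm}. This gives the rate $2^{-N(\so-d/2)}$; noting $s\wedge\so \le \so$ makes it match the stated $2^{-N(s\wedge\so - d/2)}$ (and in fact one can keep the better $\so$ exponent, but the stated form suffices and is what is needed downstream). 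The main obstacle I anticipate is the careful justification that the finite-overlap property of the wavelet supports lets one control the $L^\infty$ norm of a single-scale sum by $2^{jd/2}$ times a supremum of coefficients rather than an $\ell^1$ sum — this requires invoking (W1) to bound the number of $k$ for which $\psi_{jk}^\iota(x)\ne 0$ by a constant independent of $j$, and then applying Cauchy–Schwarz over that bounded index set. Everything else is bookkeeping with geometric series and the two norm equivalences.
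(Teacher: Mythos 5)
Your proposal is correct, and the choice of $f_N^*$ (the level-$N$ truncation of the wavelet expansion) and the entire $H^s$ bound coincide with the paper's argument: the paper also factors $2^{2js}=2^{2j\so}\,2^{2j(s-\so)}$, pulls out $2^{2N(s-\so)\vee 0}$, and closes with the norm equivalence \eqref{eq:wavelet sobolev norm}. Where you genuinely diverge is the $L^\infty$ tail bound. The paper outsources this step to an external convergence theorem for wavelet expansions (\cite[Theorem 1]{kon2001convergence}), verifying its hypotheses via (W1) and (W3) with $s$ replaced by $s\wedge\so$, which is what produces the exponent $s\wedge\so-\tfrac{d}{2}$ in the statement. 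You instead give a self-contained estimate: finite overlap of the supports at each scale (a consequence of (W1)) bounds the single-scale sum pointwise by $C\,2^{jd/2}\max_k|c_{jk}^\iota|$, and then Cauchy--Schwarz, insertion of $2^{j\so}$, the convergent geometric series $\sum_{j>N}2^{j(d/2-\so)}$, and the upper half of \eqref{eq:wavelet sobolev norm} at level $\so$ finish the job. Your route is more elementary, avoids the citation, does not use (W3) at all for this step, and actually yields the slightly sharper exponent $\so-\tfrac{d}{2}$ in place of $s\wedge\so-\tfrac{d}{2}$ (which, as you note, implies the stated bound). The one point you share with the paper and should state explicitly is that applying \eqref{eq:wavelet sobolev norm} at smoothness level $\so$ implicitly requires $\so<r$, the regularity of the wavelet system; with that caveat recorded, the argument is complete.
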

\begin{proof}
Consider the truncated series expansion 
\begin{align*}
    f_N^*:=\sum_{k\in Z^d}\langle f^*,\phi_k\rangle\phi_k+\sum_{j=0}^N\sum_{\iota\in \I}\sum_{k\in\mathbb{Z}^d}\langle f^*,\psi_{jk}^\iota\rangle \psi_{jk}^\iota.
\end{align*}
To bound the approximation error, we shall apply \cite[Theorem 1]{kon2001convergence} and check the conditions. 
Notice that (W1) ensures that $\phi$ and $\{\psi^\iota\}_{\iota\in\I}$ are all radially bounded and (W3) holds for $s$ replaced by $s\wedge s_0$. So we have 
\begin{equation}\label{eq:ex-wavelet-fN}
\begin{aligned}
    \|f^*-f^*_N\|_{L^{\infty}(\R^d)} \leq C2^{-N(s\wedge\so-\frac{d}{2})}\|f^*\|_{H^{\so}(\mathbb{R}^d)},
\end{aligned}
\end{equation}
where $C$ is a constant independent of $N$. 
Moreover, by norm equivalence \eqref{eq:nex-wavvelet-norm equivalence}, 
\begin{align*}
\begin{split}%
    \|f_N^*\|^2_{H^s(\R^d)} &\leq A\|f_N^*\|_{\RKHSN }^2 
   \hspace{-0.1cm} = \hspace{-0.1cm}A\Big[\sum_{k\in\Z^d}\langle f^*,\phi_k\rangle^2+\sum_{j=0}^N2^{2js}\sum_{\iota\in\I}\sum_{k\in \mathbb{Z}^d} \langle f^*,\psi_{jk}^\iota\rangle^2 \big]
   \hspace{-0.05cm}\\
   &=  \hspace{-0.1cm} A\Big[\sum_{k\in\Z^d}\langle f^*,\phi_k\rangle^2+\sum_{j=0}^{N}2^{2j\so}2^{2js-2j\so}\sum_{\iota\in\I}\sum_{k\in \mathbb{Z}^d} \langle f^*,\psi_{jk}^\iota\rangle^2\Big] \\
    &\leq A2^{2N(s-\so)\vee 0}\Big[\sum_{k\in\Z^d}\langle f^*,\phi_k\rangle^2+\sum_{j=0}^{N}2^{2j\so}\sum_{\iota\in\I}\sum_{k\in \mathbb{Z}^d}\langle f^*,\psi_{jk}^\iota\rangle^2\Big]\\
    &\leq A^22^{2N(s-\so)\vee 0} \|f^*\|^2_{H^{\so}(\R^d)},
    \end{split}
\end{align*}
as desired.
\end{proof}

The next lemma gives a condition for invertibility of the kernel matrix $K_{\Phi_N}$. 

\begin{lemma}\label{lemma:ex-wavelet:PD}
Let $X_n=\{x_1,\ldots,x_n\}\subset \Omega$ be distinct points. Suppose the scaling and wavelet functions $\phi,\{\psi^\iota\}_{\iota\in\I}$ are supported on a ball $B(0,R)$ for some $R>0$. Suppose $N$ is large enough so that 
\begin{align}\label{eq:ex-wavelet-PD condition}
    2^{-N}R < \q,
\end{align}
where $\q$ is the separation distance defined in \eqref{eq:h q rho}. 
Then, the kernel matrix $K_{\Phi_N}$ is invertible. 
\end{lemma}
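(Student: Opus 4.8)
The plan is to reduce invertibility of $K_{\Phi_N}$ to the existence, for each index $i$, of a function $g_i \in \RKHSN$ with $g_i(x_j) = \delta_{ij}$, and then invoke Lemma \ref{lemma:positive definiteness} exactly as in the proof of Lemma \ref{lemma:ex-KL1d-PD}. So the real content is the construction of such ``cardinal'' functions inside $\RKHSN$ under the separation hypothesis \eqref{eq:ex-wavelet-PD condition}. The key observation is that at resolution level $j$ the functions $\psi_{jk}^\iota = 2^{jd/2}\psi^\iota(2^j \cdot - k)$ are supported on balls of radius $2^{-j}R$, so once $2^{-N}R < \q$ the level-$N$ wavelets (and indeed all finer ones available in $\RKHSN$, i.e. up to level $N$) have supports small enough that any two distinct design points are separated by more than the support diameter scale. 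The idea is to pick, for each design point $x_i$, a single basis function $\psi_{Nk_i}^{\iota}$ from the top available level $j=N$ whose support contains $x_i$ but --- by the separation condition --- contains no other $x_j$. Then $g_i := \psi_{Nk_i}^{\iota}(x_i)^{-1}\,\psi_{Nk_i}^{\iota}$ vanishes at all $x_j$ with $j \ne i$ and equals $1$ at $x_i$, provided we can guarantee $\psi_{Nk_i}^{\iota}(x_i) \ne 0$.

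First I would make the support bookkeeping precise: $\operatorname{supp}\psi_{Nk}^{\iota} \subseteq 2^{-N}(B(0,R) + k) = B(2^{-N}k,\, 2^{-N}R)$, a ball of diameter $2^{-N+1}R < 2\q \le |x_i - x_j|$ for $i\ne j$; hence any such ball can contain at most one design point. Next I would address the nonvanishing issue, which is the main obstacle: it is not automatic that for a given $x_i$ there is a translate $k$ and a type $\iota$ with $\psi_{Nk}^{\iota}(x_i) \ne 0$, since the wavelets could conceivably all vanish at $x_i$. I would handle this by instead using the \emph{completeness} of the level-$N$ functions: the finite-dimensional space $W_N := \operatorname{span}\{\psi_{Nk}^{\iota}: k \in \Z^d,\ \iota \in \I\} = V_{N+1} \ominus V_N$ together with $V_0$ and the intermediate levels spans $V_{N+1}$, which contains functions that are locally arbitrary near any point; more directly, I would argue that the point-evaluation functionals $\{\delta_{x_i}\}_{i=1}^n$ are linearly independent on $\RKHSN$. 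Suppose $\sum_i c_i \delta_{x_i} = 0$ on $\RKHSN$; testing against $\psi_{Nk}^{\iota}$ for all $k,\iota$ and using that each $x_i$ lies in the support of some level-$N$ function while distinct $x_i$'s lie in disjoint such supports forces each $c_i \psi_{Nk_i}^{\iota}(x_i) = 0$, and then a dimension/density argument (the level-$N$ wavelets cannot all vanish at a fixed point unless one passes to an even finer level, which is where I would invoke that $\phi,\psi^\iota$ generate a multiresolution analysis so that no nonempty open set is annihilated by the whole collection) rules out $c_i \ne 0$.

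An alternative, and perhaps cleaner, route avoids pinning down a single wavelet: since $\{V_j\}$ is a multiresolution analysis, for any finite set of distinct points $x_1,\dots,x_n$ there is a level $J$ large enough that the Lagrange-type interpolation problem in $V_{J+1}$ (equivalently in $\operatorname{span}\{\phi_k\} \cup \bigcup_{j\le J}\{\psi_{jk}^\iota\}$) is solvable --- this is the standard fact that scaling-function spaces $V_J$ are ``asymptotically dense'' and eventually separate any finite point set. The condition \eqref{eq:ex-wavelet-PD condition} is precisely the quantitative version guaranteeing $J = N$ suffices, because once the support radius $2^{-N}R$ drops below the separation radius $\q$ the supports of the relevant basis functions about the $n$ points are pairwise disjoint, so the interpolation matrix becomes (block) diagonal with the diagonal entries controlled by the local nonvanishing of $\phi$ or some $\psi^\iota$. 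I would then conclude: there exist $g_i \in \RKHSN$ with $g_i(x_j) = \delta_{ij}$, so by Lemma \ref{lemma:positive definiteness} the matrix $K_{\Phi_N}$ is invertible. The one genuinely delicate point I expect to spend care on is showing the diagonal entries are nonzero --- i.e.\ ruling out the degenerate possibility that every level-$N$ basis function vanishes at some $x_i$ --- which I would resolve either by a small perturbation/continuity argument or by citing the multiresolution property that $\bigcup_j V_j$ is dense in $C$ on compacta, forcing some basis function at level $N$ to be nonzero at any prescribed point once $N$ is large, and then absorbing the threshold into $h_0$ if necessary.
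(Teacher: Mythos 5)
Your reduction to cardinal functions via Lemma \ref{lemma:positive definiteness} is a legitimate route and is close in spirit to what the paper actually does: the paper (Lemma \ref{lemma:lower bound on wavelet kernel matrix}) writes $K_{\Phi_N}=K_{-1}+\sum_{j=0}^N 2^{-2js}K_j$, drops all levels except $j=N$ to get $u^TK_{\Phi_N}u\geq 2^{-2Ns}u^TK_Nu$, and then uses exactly your disjoint-support observation (each $\psi_{Nk}^\iota$ is supported in a ball of radius $2^{-N}R<\q$, so no such ball contains two design points) to conclude that $K_N$ is \emph{diagonal} with entries $[K_N]_{pp}=\sum_{\iota,k}\psi_{Nk}^\iota(x_p)^2$. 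Both arguments therefore stand or fall on the same fact: that for each design point $x_p$ \emph{some} level-$N$ wavelet is nonzero at $x_p$, equivalently that the $\Z^d$-periodic continuous function $F(y):=\sum_{\iota\in\I}\sum_{k\in\Z^d}\psi^\iota(y-k)^2$ satisfies $F(2^Nx_p)>0$. The paper resolves this by asserting $\inf_y F(y)\geq c>0$, a property of the specific wavelet family that ``can be determined numerically for e.g.\ the Daubechies wavelet.''

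This is where your proposal has a genuine gap: the arguments you offer for the nonvanishing step do not work. Density of $\bigcup_j V_j$ in $L^2$ (or in $C$ on compacta) says nothing about the single level $j=N$; it is entirely consistent with the MRA axioms that all level-$N$ wavelets vanish at a given point, with the slack taken up by the scaling functions $\phi_k$ and by wavelets at other levels, both of which you have already discarded when you isolate one basis function $\psi_{Nk_i}^\iota$. Likewise, ``once $N$ is large'' does not help: whether every level-$N$ wavelet vanishes at the fixed point $x_i$ is the question of whether $F(2^Nx_i)=0$, and if $F$ had a zero this could occur for arbitrarily large $N$; nor can the problem be ``absorbed into $h_0$,'' since it concerns the wavelet system and the fixed design points, not the mesh size. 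A perturbation argument is also unavailable because the design points are given, not chosen. The correct fix is to add (or invoke) the hypothesis $\inf_y\sum_{\iota,k}\psi^\iota(y-k)^2>0$ --- which holds for Daubechies wavelets and is what the paper's appendix uses --- or else to keep the contributions of the scaling functions and of all levels $j\le N$ in play, which destroys the clean one-basis-function cardinal construction. With that hypothesis in hand, your construction $g_i=\psi_{Nk_i}^\iota/\psi_{Nk_i}^\iota(x_i)$ is valid and Lemma \ref{lemma:positive definiteness} finishes the proof; without it, the step ``there exists $k,\iota$ with $\psi_{Nk}^\iota(x_i)\neq 0$'' is unjustified.
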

\begin{proof}
    The result follows from Lemma \ref{lemma:lower bound on wavelet kernel matrix} in Appendix \ref{sec:aux lemmas}, which is proved using a similar argument as in \cite[Appendix]{griebel2015multiscale}. 
\end{proof}

We are now ready to present the main result of this subsection. 
\begin{theorem}
Let $\Omega\subset \mathbb{R}^d$ satisfy Assumption \ref{assp:Omega}. Let $X_n=\{x_1,\ldots,x_n\}\subset \Omega$ be distinct points and assume the kernel matrix $K_{\Phi_N}$ is invertible. Let $f\in H^{\so}(\Omega)$. There exists $h_0$ independent of $N$ and $n$ such that if $\h\leq h_0$, then by setting $\nugg \asymp \h^{s-d/2}$ we have  
\begin{align*}
\|f - \InterN f\|_{L^2(\Omega)}&\leq C  \Big[\h^s 2^{N(s-\so)\vee 0} + \sqrt{n}\h^{\frac{d}{2}}2^{-N(s\wedge\so-\frac{d}{2})}\Big] \|f\|_{H^{\so}(\Omega)},\\
    \|f - \InterN f\|_{L^{\infty}(\Omega)}&\leq C  \Big[\h^{s-\frac{d}{2}} 2^{N(s-\so)\vee 0} + \sqrt{n}2^{-N(s\wedge\so-\frac{d}{2})} \Big] \|f\|_{H^{\so}(\Omega)},
\end{align*}
where $C$ is a constant independent of $N$ and $n$. 
If $s=s_0$ and the design points are quasi-uniform, setting $N$ so that $2^{-N}\lesssim n^{-\frac{2\so}{d(\so-d)}}$ (which also implies \eqref{eq:ex-wavelet-PD condition}) gives 
\begin{align*}
    \|f - \InterN f\|_{L^2(\Omega)}&\leq Cn^{-\frac{\so}{d}}\|f\|_{H^{\so}(\Omega)},\\
    \|f - \InterN f\|_{L^{\infty}(\Omega)}&\leq C n^{\frac12-\frac{\so}{d}}\|f\|_{H^{\so}(\Omega)},
\end{align*}
which are the best possible rates for approximating functions in $H^{\so}(\Omega)$. 
\end{theorem}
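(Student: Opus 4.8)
The plan is to apply Theorem \ref{thm:misspecifed kriging error} part II to a periodic extension of $f,$ mirroring the structure of the proof of Theorem \ref{thm:KL trig}. First I would extend $f\in H^{\so}(\Omega)$ to all of $\R^d$ using the Sobolev extension result (Proposition \ref{prop:sobolev ext}), obtaining some $F\in H^{\so}(\R^d)$ with $F|_\Omega = f|_\Omega$ and $\|F\|_{H^{\so}(\R^d)}\lesssim \|f\|_{H^{\so}(\Omega)}.$ Unlike the trigonometric case, no periodicity is needed here since the wavelet kernel is defined on all of $\R^d,$ so I can simply set $f^* := F.$ (If one wishes $f^*$ to be compactly supported, multiply by a smooth cutoff $\eta$ equal to $1$ on $\Omega,$ as in Lemma \ref{lemma:ex-kl-general-fN}; this preserves the $H^{\so}$ norm up to a constant.)

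Next I would invoke Lemma \ref{lemma:ex-wavelet:fn onen} to produce $f_N^*\in\RKHSN$ — the truncated wavelet expansion of $f^*$ at resolution level $N$ — satisfying the two estimates $\|f^*-f_N^*\|_{L^\infty(\R^d)}\lesssim 2^{-N(s\wedge\so-d/2)}\|f^*\|_{H^{\so}(\R^d)}$ and $\|f_N^*\|_{H^s(\R^d)}\lesssim 2^{N(s-\so)\vee 0}\|f^*\|_{H^{\so}(\R^d)}.$ Then I apply Theorem \ref{thm:misspecifed kriging error} part II with this pair $(f^*, f_N^*),$ using the $N$-independent norm equivalence constant $A$ from \eqref{eq:nex-wavvelet-norm equivalence} and the invertibility of $K_{\Phi_N}$ assumed in the hypothesis. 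The nugget choice $\nugg\asymp\h^{s-d/2}$ is designed precisely to kill the $\smin(K_{\Phi_N})^{-1/2}$ term: since $\smin(K_{\Phi_N})+\nugg\geq\nugg\asymp\h^{s-d/2},$ the factor $\h^s/\sqrt{\smin(K_{\Phi_N})+\nugg}$ is bounded by $\h^s/\h^{(s-d/2)/2}=\h^{(s/2+d/4)}\lesssim\h^{d/2}$ (using $s>d/2$), and likewise in the $L^\infty$ bound the factor $\h^{s-d/2}/\sqrt{\smin+\nugg}\lesssim\h^{(s-d/2)/2}\lesssim 1.$ Substituting the estimates of Lemma \ref{lemma:ex-wavelet:fn onen} for $\|f^*-f_N^*\|_{L^\infty(\Omega)}$ and $\|f_N^*\|_{H^s(\D)}$ into the two displays of Theorem \ref{thm:misspecifed kriging error} part II, and restricting the $L^\infty$ norms from $\R^d$ to $\Omega,$ yields the first pair of bounds with a constant $C$ independent of $N$ and $n$ (the $C_{s,d,\Omega}$ from the sampling theorem depends only on $\lfloor s\rfloor,$ $d,$ $\Omega,$ all fixed here).

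For the concrete rates, I would set $s=s_0$ and take the design points quasi-uniform, so $\h\asymp\q\asymp n^{-1/d}.$ With $s=\so,$ the exponents $(s-\so)\vee 0 = 0$ and $s\wedge\so-d/2 = \so-d/2,$ so the $L^2$ bound collapses to $C[\h^{\so} + \sqrt{n}\,\h^{d/2}2^{-N(\so-d/2)}]\|f\|_{H^{\so}(\Omega)},$ and similarly for $L^\infty.$ Using $\sqrt{n}\,\h^{d/2}\asymp 1,$ the second term is $O(2^{-N(\so-d/2)}),$ and I must choose $N$ large enough that this is dominated by $\h^{\so}\asymp n^{-\so/d},$ i.e. $2^{-N(\so-d/2)}\lesssim n^{-\so/d},$ which rearranges to $2^{-N}\lesssim n^{-\frac{2\so}{d(2\so-d)}}.$ (The statement writes $n^{-\frac{2\so}{d(\so-d)}},$ which I would double-check — it may be a typo for $d(2\so-d)$ in the denominator, or reflect a coarser bound; either way the argument is the same.) I would also verify that this choice of $N$ forces $2^{-N}R<\q\asymp n^{-1/d},$ which holds since $\frac{2\so}{d(2\so-d)}>\frac1d$ when $\so>d/2,$ so condition \eqref{eq:ex-wavelet-PD condition} of Lemma \ref{lemma:ex-wavelet:PD} is automatically satisfied, giving the asserted invertibility for free. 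Finally, optimality of the rates $n^{-\so/d}$ and $n^{1/2-\so/d}$ follows from standard lower bounds for approximating $H^{\so}(\Omega)$ functions from $n$ point samples (as already invoked in Theorems \ref{thm:KL general} and \ref{thm:KL trig}).

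The main obstacle is bookkeeping rather than any deep difficulty: one must carefully track that every constant entering through Lemma \ref{lemma:ex-wavelet:fn onen}, the norm equivalence \eqref{eq:nex-wavvelet-norm equivalence}, and Theorem \ref{thm:misspecifed kriging error} is genuinely independent of $N$ and $n$ — in particular the wavelet norm-equivalence constant $A$ is $N$-independent by construction, and the sampling-theorem constant depends only on $\lfloor s\rfloor,d,\Omega$ — and that the interplay between the nugget scaling $\nugg\asymp\h^{s-d/2},$ the fill distance $\h,$ and the truncation level $N$ is balanced so that no term is left uncontrolled. A secondary point requiring care is ensuring the $L^\infty(\R^d)$ approximation bound for $f^*-f_N^*$ transfers correctly to the $L^\infty(\Omega)$ term demanded by Theorem \ref{thm:misspecifed kriging error} part II, which is immediate since $\Omega\subset\R^d.$
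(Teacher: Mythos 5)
Your proposal follows essentially the same route as the paper's proof: extend $f$ to $f^*\in H^{\so}(\R^d)$ via Proposition \ref{prop:sobolev ext}, feed the truncated wavelet expansion $f_N^*$ from Lemma \ref{lemma:ex-wavelet:fn onen} into Theorem \ref{thm:misspecifed kriging error} part II with $\D=\R^d$, and balance the two error terms in the quasi-uniform case. Your observation that the balancing condition should read $2^{-N}\lesssim n^{-\frac{2\so}{d(2\so-d)}}$ rather than $n^{-\frac{2\so}{d(\so-d)}}$ is a correct catch of a typo in the stated exponent (the paper's version even degenerates when $\so<d$), and since the paper's choice of $N$ is only required to be large enough, the argument goes through either way.
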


\begin{proof}
By Proposition \ref{prop:sobolev ext}, there exists $f^*\in H^{\so}(\R^d)$ such that 
\begin{align*}
    f^*|_{\Omega}=f|_{\Omega},\qquad \|f^*\|_{H^{\so}(\R^d)}\leq C \|f\|_{H^{\so}(\Omega)}
\end{align*}
for $C$ a constant independent of $f$. The first $L^2(\Omega)$  and $L^\infty(\Omega)$ bounds then follow from Theorem \ref{thm:misspecifed kriging error} part II applied to $f^*$ and the approximate function $f_N^*$ constructed in Lemma \ref{lemma:ex-wavelet:fn onen} with domain $\D=\R^d$. 

For the quasi-uniform case, recall that $\h \asymp \q \asymp n^{-1/d}.$ 
Moreover, 
\begin{align*}
    2^{-N}\lesssim n^{-\frac{2\so}{d(\so-d)}} \quad \Longrightarrow \quad \sqrt{n}2^{-N(\so-\frac{d}{2})} \lesssim \h^{\so-\frac{d}{2}} \,\,\text{ and } \,\, 2^{-N}\lesssim \q,
\end{align*}
 and so the invertibility condition \eqref{eq:ex-wavelet-PD condition} is satisfied, completing the proof. 
\end{proof}

\subsection{Finite element representations}\label{sec:FEM}

\subsubsection{Background}
In this subsection, we consider a finite element-based kernel that exploits sparsity not in the covariance matrix as in Section \ref{sec:wavelets} but instead in its inverse, the precision matrix. 
This idea was introduced in \cite{lindgren2011explicit}, which considers Mat\'ern Gaussian fields defined on a bounded domain by the SPDE \eqref{eq:SPDE bounded domain} and constructs a finite element approximation of it. 
Thanks to the compactly supported finite element bases, the resulting approximate Gaussian process enjoys a sparse precision matrix, which facilitates efficient sampling and inference. 
Since \cite{lindgren2011explicit}, finite element kernel representations have received increasing attention with various generalizations and theoretical analyses, see e.g. \cite{lindgren2022spde,bolin2020numerical,bolin2020rational,cox2020regularity,sanz2022finite}. With the exception of \cite{sanz2022finite} which studies posterior contraction, previous analyses focus on the question of kernel approximation rather than on the effect of kernel misspecification on downstream tasks such as kriging. Here, we provide the first error analysis for approximate kriging when a finite element-based kernel is employed.

To make our presentation self-contained, we first summarize the necessary background on finite elements. For simplicity, we focus on a one-dimensional example with Lagrange elements. Let $\Omega=(0,1)$ and let $\xi_i=i/N$ for $i=0,\ldots,N$. Consider the finite element space consisting of piecewise polynomials 
\begin{align*}
    V_N=\Big\{\psi\in \C^0(\overline{\Omega}): \psi \text{ is a degree-$p$ polynomial on }[\xi_{i-1},\xi_{i}], 1\leq i\leq N\Big\}
\end{align*}
for some fixed $p\geq 1$. These Lagrange elements satisfy $V_N\subset H^1(\Omega)$. Now let $\overline{V}_N$ be the space of functions obtained by subtracting the means of functions in $V_N$, i.e., 
\begin{align*}
    \overline{V}_N= \left\{\psi-\int_\Omega\psi(x)dx:\psi\in V_N\right\}. 
\end{align*}
Consider then the Galerkin approximation of the variational eigenvalue problem for $-\Delta$ over $\Omega$ with Neumann boundary condition:
\begin{align*}
    \langle \nabla \psi, \nabla \phi \rangle  = \lambda \langle \psi,\phi\rangle , \qquad \forall \phi \in \overline{V}_N,
\end{align*}
where $\langle\cdot,\cdot\rangle$ denotes the $L^2(\Omega)$ inner product. 
Notice that the Neumann boundary condition, often referred to as the natural boundary condition, does not need to be imposed explicitly. 
The reason to consider the mean-subtracted spaces $\overline{V}_N$ is that the bilinear form $a(\psi,\phi):=\langle \nabla \psi, \nabla \phi \rangle$ is coercive over $\overline{V}_N$ (see e.g. \cite[Proposition 5.3.2]{brenner2008mathematical}). 
Therefore, the above problem admits an $L^2(\Omega)$-orthonormalized eigenbasis $\{\psi_{N,i}\}_{i=1}^N$ for $\overline{V}_N$ with associated eigenvalues $\{\lambda_{N,i}\}_{i=1}^N$ satisfying (see e.g. \cite[Section 7]{boffi2010finite})
\begin{align*}
    \langle \nabla \psi_{N,i}, \nabla \phi \rangle  = \lambda_{N,i} \langle \psi_{N,i},\phi\rangle , \qquad \forall \phi \in \overline{V}_N.
\end{align*}

Now we are ready to define the finite element-based kernel. 
Let $\lambda_{N,0}=0$ and let $\psi_{N,0}$ be the constant one function so that $\{\psi_{N,i}\}_{i=0}^N$ is an $L^2(\Omega)$-orthonormalized basis for $V_N$. Consider the kernel
\begin{align}\label{eq:ex-FEM-kernel}
    \Phi_N(x,\tilde{x})=\sum_{i=0}^{N} \left(1+\lambda_{N,i}\right)^{-1}\psi_{N,i}(x)\psi_{N,i}(\tilde{x}).
\end{align}
The RKHS generated by $\Phi_N$ is 
\begin{align*}
    \RKHSN  &= \left\{g=\sum_{i=0}^N w_i \psi_{N,i}: \|g\|_{\RKHSN }^2:=\sum_{i=0}^N w_i^2 (1+\lambda_{N,i})<\infty\right\}
    =\text{span}\{\psi_{N,0},\ldots,\psi_{N,N}\},
\end{align*}
which as a vector space agrees with the space $V_N\subset H^1(\Omega)$. We notice further that the RKHS norm $\|\cdot\|_{\RKHSN }$ coincides with the $H^1(\Omega)$ norm over $\RKHSN $. Indeed, for $g\in \RKHSN $ we have 
\begin{align*}
    \|\nabla g\|_{L^2(\Omega)}^2 = \Big\|\sum_{i=0}^N w_i \nabla \psi_{N,i}\Big\|_{L^2(\Omega)}^2 = \sum_{i=0}^N \sum_{j=0}^N w_iw_j \langle \nabla \psi_{N,i},\nabla \psi_{N,j} \rangle =\sum_{i=0}^N w_i^2 \lambda_{N,i}, 
\end{align*}
so that 
\begin{align}\label{eq:EX-FEM:norm equiv}
    \|g\|_{H^1(\Omega)}^2 = \|g\|_{L^2(\Omega)}^2 + \|\nabla g\|_{L^2(\Omega)}^2 = \sum_{i=0}^N w_i^2 (1+\lambda_{N,i}) = \|g\|_{\RKHSN }^2. 
\end{align}

\subsubsection{Error analysis}
We shall apply Theorem \ref{thm:misspecifed kriging error} part II for the error analysis. The following lemma establishes the approximation error of a Sobolev function by functions in $V_N$.

\begin{lemma}\label{lemma:ex-FEM-fN OneN}
Let $f\in H^{\so}(\Omega)$ with $1\leq s_0\in \mathbb{N}$. There exists $f_N\in V_N$ such that 
\begin{align*}
    \|f-f_N\|_{L^{\infty}(\Omega)} &\leq C N^{-\big[(p+1)\wedge s_0-\frac12\big]}\|f\|_{H^{\so}(\Omega)},\\
    \|f_N\|_{H^1(\Omega)} &\leq C \|f\|_{H^{\so}(\Omega)},
\end{align*}
where $C$ is a constant independent of $N$. 
\end{lemma}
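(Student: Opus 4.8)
The plan is to establish the two bounds in Lemma \ref{lemma:ex-FEM-fN OneN} by exhibiting an explicit $f_N \in V_N$ --- namely a standard finite element interpolant (or quasi-interpolant) of $f$ --- and then invoking classical finite element approximation theory together with the norm equivalence \eqref{eq:EX-FEM:norm equiv}.

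\textbf{Step 1: Choice of $f_N$ and the $H^1$ bound.} First I would let $\Pi_N f \in V_N$ denote a suitable projection or interpolation operator onto $V_N$; since $s_0 \geq 1$ we have $f \in H^{s_0}(\Omega) \subset H^1(\Omega) \subset C^0(\overline{\Omega})$ in one dimension (by Sobolev embedding in $d=1$), so the nodal Lagrange interpolant is well defined. Classical finite element theory (e.g. \cite{brenner2008mathematical}) gives the $H^1$-stability estimate $\|\Pi_N f\|_{H^1(\Omega)} \leq C\|f\|_{H^1(\Omega)} \leq C\|f\|_{H^{s_0}(\Omega)}$, with $C$ independent of $N$ (this is the mesh size $h = 1/N$ regime). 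This immediately yields the second displayed bound. Setting $f_N := \Pi_N f$, one should double-check that $f_N \in V_N$ rather than merely $\overline{V}_N$; since $V_N = \operatorname{span}\{\psi_{N,0},\ldots,\psi_{N,N}\}$ contains constants and equals the full Lagrange space, this is automatic.

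\textbf{Step 2: The $L^\infty$ bound.} For the first estimate I would combine an $L^2$ (or $H^1$) finite element error bound with an inverse/Sobolev embedding estimate, or directly use an $L^\infty$ interpolation error estimate. The standard approximation result gives $\|f - \Pi_N f\|_{H^1(\Omega)} \leq C N^{-(t-1)}\|f\|_{H^t(\Omega)}$ for $1 \leq t \leq \min(p+1, s_0)$, hence with $t = \min(p+1,s_0)$, $\|f - \Pi_N f\|_{H^1(\Omega)} \leq C N^{-(\min(p+1,s_0)-1)}\|f\|_{H^{s_0}(\Omega)}$. In $d = 1$, the Sobolev embedding $\|g\|_{L^\infty(\Omega)} \leq C\|g\|_{H^1(\Omega)}$ does not give the sharp exponent, so instead I would use the Gagliardo--Nirenberg / interpolation inequality $\|g\|_{L^\infty(\Omega)} \leq C\|g\|_{L^2(\Omega)}^{1/2}\|g\|_{H^1(\Omega)}^{1/2}$ applied to $g = f - \Pi_N f$, together with the $L^2$ error bound $\|f - \Pi_N f\|_{L^2(\Omega)} \leq C N^{-\min(p+1,s_0)}\|f\|_{H^{s_0}(\Omega)}$. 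This produces $\|f - \Pi_N f\|_{L^\infty(\Omega)} \leq C N^{-\min(p+1,s_0)+1/2}\|f\|_{H^{s_0}(\Omega)}$, which is exactly $C N^{-[(p+1)\wedge s_0 - 1/2]}\|f\|_{H^{s_0}(\Omega)}$ as claimed.

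\textbf{Main obstacle.} The delicate point is matching the exact exponent $(p+1)\wedge s_0 - \tfrac12$, which requires being careful about which finite element error estimates are available when $f$ has only Sobolev regularity $s_0$ that may itself be small (e.g. $s_0 = 1$, in which case the exponent is $\tfrac12$ and one only has the crude bound $\|f - \Pi_N f\|_{L^\infty} \lesssim \|f\|_{H^1}$ with no rate, so one must instead argue that the bound $N^{-1/2}\|f\|_{H^1}$ is still attainable --- e.g. via $\|f - \Pi_N f\|_{L^2} \lesssim N^{-1}\|f'\|_{L^2}$ combined with the interpolation inequality, noting $\|f - \Pi_N f\|_{H^1} \lesssim \|f\|_{H^1}$ holds trivially by stability). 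I would also need to confirm that the quasi-interpolant rather than the nodal interpolant is used if $s_0 = 1$ does not give enough pointwise regularity in higher dimensions --- but since the lemma is stated for $\Omega = (0,1)$ with $s_0 \geq 1$, the Sobolev embedding $H^1(0,1) \hookrightarrow C^0[0,1]$ makes the nodal interpolant legitimate throughout, so this is a non-issue here. A secondary bookkeeping point is ensuring all constants are genuinely independent of $N$; this follows from the standard scaling argument on the reference element since the mesh is uniform with $h = 1/N$.
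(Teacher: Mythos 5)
Your proposal is correct and arrives at the right exponent, but the route to the $L^\infty$ bound differs from the paper's. The paper also takes $f_N$ to be the nodal finite element interpolant, but it reads off the $L^\infty$ estimate directly from the $W^r_\infty$ interpolation error bound of \cite[Theorem 4.4.20]{brenner2008mathematical} (Lemma \ref{lemma:FEM approx error}): with $r=0$, $m=(p+1)\wedge s_0$, $d=1$ and $h=N^{-1}$ this gives $\|f-f_N\|_{L^\infty(\Omega)}\leq C h^{m-d/2}|f|_{H^m(\Omega)} = CN^{-[(p+1)\wedge s_0-\frac12]}\|f\|_{H^{\so}(\Omega)}$ in one step, and the $H^1$ bound follows from the $r=1$ case of the same theorem plus the triangle inequality (rather than quoting $H^1$-stability outright, though these are equivalent). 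You instead derive the $L^\infty$ rate by combining the $L^2$ and $H^1$ error estimates through the one-dimensional Gagliardo--Nirenberg inequality $\|g\|_{L^\infty}\lesssim\|g\|_{L^2}^{1/2}\|g\|_{H^1}^{1/2}$, which gives $N^{-t/2}\cdot N^{-(t-1)/2}=N^{-(t-\frac12)}$ with $t=(p+1)\wedge s_0$ --- the same exponent. Your version is slightly more elementary in that it only needs the $L^2$-based Bramble--Hilbert estimates plus a standard interpolation inequality, and your explicit treatment of the borderline case $s_0=1$ (where the $H^1$ error carries no rate and only stability is available) is a point the paper glosses over by taking $m=(p+1)\wedge s_0$ in the cited theorem. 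The paper's version is shorter and generalizes more readily to $d>1$, where the $-d/2$ loss in the $W^0_\infty$ estimate is no longer reproduced by the $L^2$--$H^1$ interpolation trick. Both arguments correctly use that $H^1(0,1)\hookrightarrow C^0[0,1]$ makes the nodal interpolant well defined, and both yield constants independent of $N$ by the uniform-mesh scaling argument.
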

\begin{proof}
The approximation $f_N$ will be taken as the finite element interpolation of $f$. 
Recall that the finite element space $V_N$ consists of piecewise polynomials of degree $p$ and the associated nodal variables only involve function values but not derivatives, so that the assumption of \cite[Theorem 4.4.4]{brenner2008mathematical} is satisfied with $m=p+1$ and $\ell=0$.
Then, letting $f_N$ be the finite element interpolant in \cite[Theorem 4.4.20]{brenner2008mathematical}, we have for $f\in H^{\so}(\Omega)\subset H^{(p+1)\wedge \so}(\Omega)$ and $h=N^{-1}$ that 
\begin{align*}
    \|f-f_N\|_\infty & \leq C N^{-\big[(p+1)\wedge \so-\frac12\big]} \|f\|_{H^{(p+1)\wedge \so}(\Omega)}\leq C N^{-\big[(p+1)\wedge \so-\frac12\big]} \|f\|_{H^{\so}(\Omega)},\\
    \|f-f_N\|_{H^1(\Omega)}& \leq C N^{-\big[(p+1)\wedge \so-1\big]} \|f\|_{H^{(p+1)\wedge \so}(\Omega)}\leq C N^{-\big[(p+1)\wedge \so-1\big]} \|f\|_{H^{\so}(\Omega)},
\end{align*}
where $C$ is a constant independent of $N$. 
The second assertion further implies that 
\begin{align*}
    \|f_N\|_{H^1(\Omega)} \leq C \|f\|_{H^1(\Omega)} + C N^{-\big[(p+1)\wedge \so-1\big]} \|f\|_{H^{\so}(\Omega)} \leq C\|f\|_{H^{\so}(\Omega)},
\end{align*}
where we have used that $(p+1)\wedge s_0\geq 1$.

\end{proof}

The next lemma gives a simple sufficient condition for invertibility of the kernel matrix $K_{\Phi_N}$. 
\begin{lemma}\label{lemma:ex-FEM-PD}
Let $X_n=\{x_1,\ldots,x_n\}\subset \Omega$ be distinct points. 
Suppose the separation distance $\q$ defined in \eqref{eq:h q rho} satisfies 
\begin{align}\label{eq:ex-FEM-PD condition}
    \q > N^{-1}.
\end{align}
Then, the kernel matrix $K_{\Phi_N}$ is invertible.  
\end{lemma}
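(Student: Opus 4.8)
The plan is to show that the kernel matrix $K_{\Phi_N}$ is positive definite by exhibiting, for each design point, a function in $\RKHSN$ that acts as a cardinal (Lagrange-type) function on $X_n,$ and then invoking the general criterion (Lemma \ref{lemma:positive definiteness}) that relates invertibility of the kernel matrix to the existence of such interpolants. Concretely, since $\RKHSN$ coincides as a vector space with the finite element space $V_N,$ it suffices to build, for each $i=1,\dots,n,$ a function $g_i\in V_N$ with $g_i(x_j)=\delta_{ij}$ for all $j.$

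The key observation is that the separation condition \eqref{eq:ex-FEM-PD condition}, namely $\q>N^{-1},$ guarantees that no two distinct design points can lie in the same mesh subinterval $[\xi_{k-1},\xi_k],$ because each such subinterval has length $N^{-1}<2\q$ while any two design points are at distance at least $2\q$ apart. Hence every subinterval of the mesh contains at most one design point. First I would use this to localize: for a given $x_i,$ let $[\xi_{k-1},\xi_k]$ be the (unique) subinterval containing it. On that subinterval, the degree-$p$ Lagrange element has $p+1\ge 2$ nodal points, so I can choose a polynomial of degree $p$ on $[\xi_{k-1},\xi_k]$ that takes value $1$ at $x_i$ and vanishes at $\xi_{k-1}$ and $\xi_k$ (and at the remaining interior nodes if $p>2$). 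Extending by zero outside $[\xi_{k-1},\xi_k]$ yields a function that is continuous (since it matches the zero function at the shared nodes $\xi_{k-1},\xi_k$), hence lies in $V_N=\RKHSN,$ equals $1$ at $x_i,$ and vanishes at every $\xi_j.$ The only remaining concern is that this function might be nonzero at some other design point $x_j$ with $j\neq i$; but $x_j$ lies in a different subinterval by the separation argument, so $g_i(x_j)=0$ automatically. Thus $g_i$ is the desired cardinal function.

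Once the cardinal functions $\{g_i\}_{i=1}^n\subset\RKHSN$ are in hand, invertibility of $K_{\Phi_N}$ follows immediately from Lemma \ref{lemma:positive definiteness} (the same device already used in the proofs of Lemmas \ref{lemma:ex-KL1d-PD} and \ref{lemma:ex-wavelet:PD}). I expect the only mildly delicate point to be the bookkeeping around the endpoint subintervals and the case $p=1,$ where the ``interior'' node is an endpoint; but even then one can take $g_i$ to be the continuous piecewise-linear hat-type function supported on the two subintervals adjacent to $x_i$'s cell, peaking at $x_i,$ and the separation condition again ensures no other design point is hit. Overall the argument is essentially combinatorial-geometric once the mesh resolution exceeds the point separation, with no hard estimates required.
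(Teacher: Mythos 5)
Your proposal is correct and follows essentially the same route as the paper: build cardinal functions in $V_N=\RKHSN$ localized near each $x_i$, use the separation condition to ensure no other design point enters the support, and conclude via Lemma \ref{lemma:positive definiteness}. Two small slips are worth flagging. First, your parenthetical asking the degree-$p$ polynomial to also vanish at the remaining interior nodes when $p>2$ would force it to have $p+1$ zeros and hence be identically zero; you only need it to vanish at $\xi_{k-1}$ and $\xi_k$ for the zero extension to be continuous. Second, a ``hat-type function peaking at $x_i$'' is not in $V_N$ unless $x_i$ is a mesh node; the correct patch (for $p=1$, and also when $x_i$ coincides with a node) is to take the standard nodal hat at a mesh node adjacent to $x_i$ at which it is nonzero and normalize, whose support has length $2N^{-1}$ and is therefore still free of other design points since $\q>N^{-1}$ gives $\min_{j\neq i}|x_j-x_i|>2N^{-1}$. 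The paper sidesteps both issues at once by working from the outset on the double cell $T=[\xi_j,\xi_{j+2}]$ containing $x_i$ with $x_i$ not a boundary node of $T$, which handles all $p\geq 1$ and all positions of $x_i$ uniformly.
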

\begin{proof}
    By Lemma \ref{lemma:positive definiteness} it suffices to show the existence of functions $\{f_j\}_{j=1}^n\subset V_N$ such that $f_j(x_i)=\delta_{ij}$. For each $x_i$, let $T=[\xi_j,\xi_{j+2}]$ be the interval that contains it so that $x_i$ is not one of the boundary nodes. 
    Since over each subinterval $[\xi_j,\xi_{j+1}]$ and $[\xi_{j+1},\xi_{j+2}]$ the finite element space $V_N$ consists of all polynomials of degree $p$, there exists a function $\psi^{(i)}\in V_N$ supported over $T$ such that $\psi^{(i)}(x_i)\neq 0$. 
    The assumption \eqref{eq:ex-FEM-PD condition} implies that $\operatorname{min}_{j\neq i}|x_j-x_i|> 2N^{-1}$ so that any other points $x_j$ for $j\neq i$ must lie outside of $T$ since the length of $T$ is $2N^{-1}$. 
    Therefore, $f_i=\psi^{(i)}/\psi^{(i)}(x_i)$ satisfies the requirements. 
\end{proof}

Now we are ready to state the main result of this subsection. 
\begin{theorem}
Let $\Omega=(0,1)$ and $X_n=\{x_1,\ldots,x_n\}\subset \Omega$ be distinct points. Suppose the kernel matrix $K_{\Phi_N}$ is invertible. 
Let $f\in H^{\so}(\Omega)$ with $1\leq s_0 \in \mathbb{N}$. Then, there exists $h_0$ independent of $N$ and $n$ such that if $\h\leq h_0$, we have by setting $\sqrt{\nugg}\asymp \sqrt{\h}$ that 
\begin{align*}
        \|f- \InterN f\|_{L^2(\Omega)}& \leq C\Big(\sqrt{n\h}N^{-\big[(p+1)\wedge \so-\frac12\big]}+\h\Big)\|f\|_{H^{\so}(\Omega)},\\
         \|f- \InterN f\|_{L^{\infty}(\Omega)}& \leq C\Big(\sqrt{n}N^{-\big[(p+1)\wedge \so-\frac12\big]}+\sqrt{\h}\Big)\|f\|_{H^{\so}(\Omega)}, 
\end{align*}
where $C$ is a constant independent of $N$ and $n$. 

When $s_0=1$, if the design points are quasi-uniform, setting $N\gtrsim n$ to guarantee invertibility of $K_{\Phi_N},$ we have 
\begin{align*}
    \|f- \InterN f\|_{L^2(\Omega)}& \leq Cn^{-1}\|f\|_{H^1(\Omega)},\\
         \|f- \InterN f\|_{L^{\infty}(\Omega)}& \leq Cn^{-1/2}\|f\|_{H^1(\Omega)}, 
\end{align*}
which are the best possible rates for approximating functions in $H^1(\Omega)$.

\end{theorem}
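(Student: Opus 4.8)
The plan is to invoke Theorem~\ref{thm:misspecifed kriging error} part~II directly, since this is exactly its regime: the truth $f$ need not lie in $\RKHSN$, and we only know $f\in H^{\so}(\Omega)$, which for $\so\ge 1>\frac12=\frac{d}{2}$ embeds into $C(\overline{\Omega})$, so $f$ has well-defined pointwise values and lies in $L^\infty(\Omega)$. The essential structural observation is that here $\D=\Omega=(0,1)$, $d=1$, and by \eqref{eq:EX-FEM:norm equiv} the RKHS norm of $\Phi_N$ \emph{coincides identically} with $\|\cdot\|_{H^1(\Omega)}$ on $\RKHSN=V_N$; hence in the notation of Theorem~\ref{thm:misspecifed kriging error} one may take $\sN=1$ and $A_N=1$ for every $N$. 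Consequently the prefactor $C_{\sN,d,\Omega}$ collapses to a single fixed constant $C_{1,1,\Omega}$, and---unlike the Matérn example of Subsection~\ref{sec:ex:para miss}---no extra argument is needed to keep constants uniform in $N$. The admissibility threshold is $h_0:=h_{1,1,\Omega}$, the constant from Lemma~\ref{lemma:approx interpolation error} (equivalently the sampling theorem, Lemma~\ref{lemma:sampling theorem}), which is independent of $N$ and $n$; invertibility of $K_{\Phi_N}$ is assumed, and Lemma~\ref{lemma:ex-FEM-PD} furnishes the sufficient condition \eqref{eq:ex-FEM-PD condition}, namely $\q>N^{-1}$.

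For the free function $f_N\in\RKHSN$ that part~II allows us to choose, I would take the finite element interpolant of $f$ supplied by Lemma~\ref{lemma:ex-FEM-fN OneN}: it lies in $V_N=\RKHSN$ and satisfies $\|f-f_N\|_{\LinfO}\lesssim N^{-[(p+1)\wedge\so-\frac12]}\|f\|_{H^{\so}(\Omega)}$ together with $\|f_N\|_{H^1(\Omega)}\lesssim\|f\|_{H^{\so}(\Omega)}$. Feeding these two estimates, along with $\sN=1$, $d=1$, $A_N=1$, into the two inequalities of Theorem~\ref{thm:misspecifed kriging error} part~II turns the $L^2$ bound into $C\sqrt n\big[\h\,/\sqrt{\smin(K_{\Phi_N})+\nugg}+\sqrt{\h}\,\big]N^{-[(p+1)\wedge\so-\frac12]}\|f\|_{H^{\so}(\Omega)}+C\big(\h+\sqrt{\h}\sqrt{\nugg}\big)\|f\|_{H^{\so}(\Omega)}$, and analogously for the $L^\infty$ bound, obtained by dividing each explicit power of $\h$ by $\h^{d/2}=\sqrt{\h}$.

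The remaining choice is the nugget: take $\sqrt{\nugg}\asymp\sqrt{\h}$, i.e. $\nugg\asymp\h$. Then $\sqrt{\smin(K_{\Phi_N})+\nugg}\ge\sqrt{\nugg}\gtrsim\sqrt{\h}$, so $\h/\sqrt{\smin(K_{\Phi_N})+\nugg}\lesssim\sqrt{\h}$ and $\sqrt{\h}/\sqrt{\smin(K_{\Phi_N})+\nugg}\lesssim 1$, while $\sqrt{\h}\sqrt{\nugg}\asymp\h$. This is precisely the device that lets us \emph{bypass} any lower bound on $\smin(K_{\Phi_N})$---the one genuinely delicate quantity in the problem---at no cost in the rate. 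Collecting terms then gives exactly the two displayed bounds $C(\sqrt{n\h}\,N^{-[(p+1)\wedge\so-\frac12]}+\h)\|f\|_{H^{\so}(\Omega)}$ and $C(\sqrt n\,N^{-[(p+1)\wedge\so-\frac12]}+\sqrt{\h})\|f\|_{H^{\so}(\Omega)}$, with $C=C(p,\Omega)$ independent of $N$ and $n$ by the discussion above.

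Finally, for the asymptotic rates one specializes to $\so=1$: since $p\ge1$ we have $(p+1)\wedge\so=1$ and the finite element exponent becomes $\frac12$, so the first term in each bound is of order $\sqrt{n\h}\,N^{-1/2}$ and $\sqrt n\,N^{-1/2}$, respectively. For quasi-uniform designs $\h\asymp\q\asymp n^{-1}$, so choosing $N$ sufficiently large relative to $n$ (which in particular makes $N^{-1}<\q$, hence \eqref{eq:ex-FEM-PD condition} holds and $K_{\Phi_N}$ is invertible) renders the finite element term no larger than the second term, leaving $\|f-\InterN^\nugg f\|_{\LtwoO}\lesssim n^{-1}\|f\|_{H^1(\Omega)}$ and $\|f-\InterN^\nugg f\|_{\LinfO}\lesssim n^{-1/2}\|f\|_{H^1(\Omega)}$, the optimal approximation rates for $H^1(\Omega)$. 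The only truly delicate point---the smallest eigenvalue of $K_{\Phi_N}$---having been removed by the nugget, what is left is bookkeeping: checking that $\sN$, $A_N$, $h_0$ and hence every constant are genuinely independent of $N$ and $n$, which here is immediate because $\sN\equiv1$ and $A_N\equiv1$.
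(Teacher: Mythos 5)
Your proposal is correct and follows exactly the paper's route: Theorem~\ref{thm:misspecifed kriging error} part~II with $\sN\equiv 1$, $A_N=1$ via \eqref{eq:EX-FEM:norm equiv}, the finite element interpolant of Lemma~\ref{lemma:ex-FEM-fN OneN} as $f_N$, and the nugget choice $\sqrt{\nugg}\asymp\sqrt{\h}$ to absorb $\smin(K_{\Phi_N})$. If anything you are slightly more careful than the paper on the final step, since for $\so=1$ making the finite element term no larger than $\h\asymp n^{-1}$ actually forces $N\gtrsim n^{2}$, whereas $N\gtrsim n$ only secures invertibility.
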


\begin{proof}
The first $L^2$ and $L^\infty$ bounds follow from Theorem \ref{thm:misspecifed kriging error} part II with the approximation $f_N$ constructed in Lemma \ref{lemma:ex-FEM-fN OneN} with $s_N\equiv 1$ and $\D=\Omega$. For the quasi-uniform case, we set $N$ to balance the two error terms, which gives $N\gtrsim n^{\frac{1}{p+0.5}}$. By \eqref{eq:ex-FEM-PD condition}, $N \gtrsim n$ ensures invertibility since by quasi-uniformness $\q\asymp n^{-1}$.
\end{proof}

\section{Conclusions and discussion}\label{sec:conclusions}
This paper has introduced a unified framework to analyze Gaussian process regression under computational and epistemic misspecification. We have illustrated this framework in a wide range of examples, recovering existing results and obtaining new ones for simultaneous epistemic and computational misspecification in Karhunen-Lo\`eve expansions, wavelet-based multiscale kernels, and finite element kernels. We believe that our framework could be applied to analyze other kernel methods based, for instance, on hierarchical matrices or graphs. Another important direction for future research is to leverage our new theory for Gaussian process regression to analyze downstream tasks, including Bayesian optimization and posterior sampling with surrogate likelihood functions.

\section*{Acknowledgments}
This work was partially funded by NSF CAREER award NSF DMS-2237628, DOE DE-SC0022232, and the BBVA Foundation.
The authors are thankful to Omar Al-Ghattas and Jiaheng Chen for providing generous feedback on an earlier version of this manuscript.

\appendix

\section{Proofs of Lemmas in Section \ref{sec:proof of main result}}\label{sec:Appendix1}

\begin{proof}[Proof of Lemma \ref{lemma:sampling theorem}]
The first assertions are special cases of \cite[Theorem 3.1]{arcangeli2012extension}, corresponding to parameters $(p,q,\varkappa)=(2,2,2)$ and $(p,q,\varkappa)=(2,\infty,2)$ respectively. 
The fact that $h_{\lfloor s\rfloor,d,\Omega}$ depends only on the integer part of $s$ is explained in their Remark 3.2. 
When $u|_{X_n}=0$, we can employ the sampling theorem from \cite[Theorem 2.12]{narcowich2005sobolev}, where the new constant $C_{\lfloor s\rfloor,d,\Omega}$ depends only on the integer part of $s$. 
\end{proof}

\begin{proof}[Proof of Lemma \ref{lemma:regularized LS}]
First, for $\lambda>0$ the approximate interpolant $\Interpsi^\nugg f$ is the unique solution to the regularized least-squares problem (see e.g. \cite[Theorem 3.4]{kanagawa2018gaussian})
    \begin{align}\label{eq:regularized LS}
        \Interpsi^\nugg f=\underset{\widehat{f}\in \RKHSpsi}{\operatorname{arg\,min}}\,\, \sum_{i=1}^n | f(x_i) - \widehat{f}(x_i)|^2 +\nugg \|\widehat{f}\|_{\RKHSpsi}^2.
    \end{align}
On the other hand, the interpolant $\Interpsi f$ defined in \eqref{eq:kriging estimator} is the minimum-norm interpolant in $\RKHSpsi$ of the function values $f(x_1),\ldots,f(x_n)$ (see e.g. \cite[Theorem 3.5]{kanagawa2018gaussian})
\begin{align*}
    \Interpsi f = \underset{\widehat{f}\in \RKHSpsi}{\operatorname{arg\,min}}\,\,  \|\widehat{f}\|_{\RKHSpsi} \quad \text{ subject to   }  \quad \widehat{f}(x_i)=f(x_i) \quad \forall i=1,\ldots,n.
\end{align*}
In particular, $\Interpsi f(x_i)=f(x_i)$ for all $i$ and $\|\Interpsi f\|_{\RKHSpsi}\leq \|f\|_{\RKHSpsi}$. This implies by \eqref{eq:regularized LS} that 
\begin{align*}
    \|f - \Interpsi^\nugg f\|^2_{\dtwo} \vee \nugg\|\Interpsi^\nugg f\|^2_{\RKHSpsi}
    &\leq \sum_{i=1}^n |f(x_i) - \Interpsi^\nugg f(x_i)|^2+\nugg \|\Interpsi^\nugg f\|^2_{\RKHSpsi} \\
    &\leq \sum_{i=1}^n |f(x_i) - \Interpsi f(x_i)|^2 +\nugg \|\Interpsi f\|^2_{\RKHSpsi}\\
    & = \nugg \|\Interpsi f\|^2_{\RKHSpsi}\leq \nugg \|f\|_{\RKHSpsi}^2,
\end{align*}
yielding the claims. 
\end{proof}

\begin{proof}[Proof of Lemma \ref{lemma:approx inter error on Xn}]
   We let $Y:=\bigl(g(x_1),\ldots,g(x_n)\bigr)^T$ and write
   \begin{align*}   
    \begin{bmatrix}
        \Interpsi^{\nugg} g(x_1) \\
        \vdots\\
        \Interpsi^{\nugg} g(x_n)
    \end{bmatrix}   
    =
    \begin{bmatrix}
        k_{\Psi}(x_1)^T\\
        \vdots\\
        k_{\Psi}(x_n)^T      
    \end{bmatrix}
    (K_{\Psi}+\nugg I)^{-1} Y &= K_{\Psi}(K_{\Psi}+\nugg I)^{-1}Y. 
   \end{align*}
   Therefore, denoting by $\|\cdot\|_2$ the vector 2-norm,
   \begin{align*}
       \|\Interpsi^{\nugg}g -g\|_{\dtwo} &= \|K_{\Psi}(K_{\Psi}+\nugg I)^{-1}Y-Y\|_2 \\
       &= \nugg \|(K_{\Psi}+\nugg I)^{-1}Y\|_2
       \leq \frac{\nugg}{\smin(K_{\Psi})+\nugg} \|Y\|_2.
   \end{align*}
  The first claim follows by noticing that $\|Y\|_2= \|g\|_{\dtwo}$. To show the second claim, let $\alpha = (K_{\Psi}+\nugg I)^{-1}Y$. Then, $\Interpsi^\nugg g=$ $\sum_{i=1}^n \alpha_i \Psi(\cdot,x_i)$ and 
\begin{align*}
    \|\Interpsi^\nugg g\|_{\RKHSpsi}^2
    &= \alpha^T K_{\Psi} \alpha 
    = Y^T(K_{\Psi}+\nugg I)^{-1} K_{\Psi}(K_{\Psi}+\nugg I)^{-1}Y\\
    & \leq 
    Y^T(K_{\Psi}+\nugg I)^{-1} Y
     \leq \frac{\|Y\|_2^2}{\smin(K_{\Psi})+\nugg} . 
\end{align*}
\end{proof}

\section{Auxiliary Results for Section \ref{sec:examples}} \label{sec:aux lemmas}
\begin{proposition}\label{prop:sobolev ext}
    Let $\Omega\subset \R^d$ be a bounded domain with Lipschitz boundary. Let $s>\frac{d}{2}$, there exists a bounded extension operator $E_{\Omega}:H^s(\Omega) \rightarrow H^s(\mathbb{R}^d)$: for all $g\in H^s(\Omega)$, $E_{\Omega}g=g$ over $\Omega$ and
    \begin{align*}
        \|E_{\Omega}g\|_{H^s(\mathbb{R}^d)} \leq C_{s,d,\Omega} \|g\|_{H^s(\Omega)}.
    \end{align*}
   
\end{proposition}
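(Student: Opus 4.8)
The plan is to recognize Proposition \ref{prop:sobolev ext} as the classical Sobolev extension theorem for bounded Lipschitz domains, which one may simply cite (see, e.g., \cite{stein1970singular}). Since, however, the paper defines $H^s(\Omega)$ through restriction with the infimum norm, there is a short self-contained functional-analytic proof that fits this definition exactly, and that is the route I would take, keeping the classical theorem in reserve only for the (separate) question of identifying $H^s(\Omega)$ with an intrinsically defined Sobolev space.

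First I would introduce the restriction operator $R\colon H^s(\R^d)\to H^s(\Omega)$, $RF:=F|_\Omega$. For $s>\frac{d}{2}$ the function $F$ has a continuous representative, so $F|_\Omega$ is unambiguous and lies in $L^2(\Omega)$ since $\Omega$ is bounded; by the definition of $\|\cdot\|_{H^s(\Omega)}$ as an infimum over extensions, $R$ is onto and $\|RF\|_{H^s(\Omega)}\le\|F\|_{H^s(\R^d)}$. Next I would observe that the kernel $\mathcal N:=\{F\in H^s(\R^d):F|_\Omega\equiv 0\}$ is the preimage of $\{0\}$ under the bounded map $F\mapsto F|_\Omega\in L^2(\Omega)$, hence a \emph{closed} subspace of the Hilbert space $H^s(\R^d)$. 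Writing $H^s(\R^d)=\mathcal N\oplus\mathcal N^\perp$ orthogonally and noting $R(\mathcal N)=\{0\}$, the map $R|_{\mathcal N^\perp}\colon\mathcal N^\perp\to H^s(\Omega)$ is a linear bijection; I would define $E_\Omega:=(R|_{\mathcal N^\perp})^{-1}$. Then $E_\Omega$ is linear, $E_\Omega g|_\Omega=g$ by construction, and for any competing extension $G\in H^s(\R^d)$ of $g$ we have $G-E_\Omega g\in\mathcal N$ while $E_\Omega g\in\mathcal N^\perp$, so $\|G\|_{H^s(\R^d)}^2=\|E_\Omega g\|_{H^s(\R^d)}^2+\|G-E_\Omega g\|_{H^s(\R^d)}^2\ge\|E_\Omega g\|_{H^s(\R^d)}^2$; taking the infimum over such $G$ gives $\|E_\Omega g\|_{H^s(\R^d)}=\|g\|_{H^s(\Omega)}$, i.e. the bound holds with $C_{s,d,\Omega}=1$.

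With the paper's definition of $H^s(\Omega)$ this argument uses neither the Lipschitz hypothesis nor $s>\frac{d}{2}$ in any essential way, so in that sense there is no real obstacle. The genuine difficulty — and the reason the Lipschitz assumption is present — appears only if one wants $H^s(\Omega)$ to agree with the intrinsic space defined by weak derivatives (integer $s$) or Slobodeckij seminorms (fractional $s$); establishing that equivalence is precisely the classical theorem. There the standard route is to localize near the compact boundary $\partial\Omega$, cover it by finitely many charts in which $\partial\Omega$ is a Lipschitz graph, and glue local extensions with a smooth partition of unity, and the obstacle is that one cannot simply flatten each chart and reflect, since composition with a merely Lipschitz change of variables preserves $H^1$ but not $H^s$ for $s>1$. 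The resolution is Stein's regularized-distance extension (an average of reflections weighted by a smooth function of the distance to $\partial\Omega$), which I would invoke from \cite{stein1970singular} rather than reproduce. For the Proposition as literally stated, the short Hilbert-space argument above suffices.
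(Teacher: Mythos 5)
Your proof is correct, but it takes a genuinely different route from the paper. The paper disposes of this proposition by citation: the integer case is \cite[Theorem 1.4.5]{brenner2008mathematical} and the fractional case is discussed on p.~374 of the same reference, i.e.\ it invokes the classical Stein/Calder\'on extension theorem for Lipschitz domains. You instead exploit the fact that the paper \emph{defines} $H^s(\Omega)$ as the space of restrictions of $H^s(\R^d)$ functions with the infimum (quotient) norm, and give a self-contained Hilbert-space argument: the restriction map $R$ is a bounded surjection, its kernel $\mathcal N$ is closed (being the preimage of $\{0\}$ under the bounded map $F\mapsto F|_\Omega\in L^2(\Omega)$), and the inverse of $R$ restricted to $\mathcal N^\perp$ is a linear extension operator whose image realizes the minimal-norm extension, so the bound holds with $C_{s,d,\Omega}=1$. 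All the steps check out (injectivity and surjectivity of $R|_{\mathcal N^\perp}$, linearity of the inverse, and the Pythagoras identity showing the infimum is attained at $E_\Omega g$). What your argument buys is an explicit constant and no dependence on the Lipschitz hypothesis or on $s>\frac d2$; what it deliberately defers --- and you are right to flag this --- is the identification of the quotient space $H^s(\Omega)$ with the intrinsically defined Sobolev space, which is where the Lipschitz assumption and the classical theorem actually enter, and which is implicitly relied upon elsewhere in the paper (e.g.\ when a hypothesis $f\in H^{s_0}(\Omega)$ or an embedding $H^{s_0}(\Omega)\subset C^{s_0}$ is meant intrinsically, or when the sampling inequality of Lemma \ref{lemma:sampling theorem} is applied with the intrinsic norm). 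So your proof is valid for the literal statement, while the paper's citation-based proof is the one that also supplies the norm equivalence needed downstream.
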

\begin{proof}
    The case when $s$ is an integer is stated in \cite[Theorem 1.4.5]{brenner2008mathematical}. The general case is discussed in \cite[p. 374]{brenner2008mathematical}. 
\end{proof}

\begin{proposition}[\cite{narcowich2006sobolev}, Proposition 3.1]\label{prop:exist interpolant abstract}
    Let $\mathcal{Y}$ be a Banach space, $\mathcal{V}\subset \mathcal{Y}$ a subspace, and $\mathcal{Z}'$ a finite-dimensional subspace of the dual space $\mathcal{Y}'$. Suppose that, for every $z'\in \mathcal{Z}'$ and some $\hat{\gamma}>1$ independent of $z',$ it holds that
    \begin{align*}
        \|z'\|_{\mathcal{Y}'} \leq \hat{\gamma} \|z'|_{\mathcal{V}}\|_{\mathcal{V}'}.
    \end{align*}
    Then, for any $y\in\mathcal{Y},$ there exists $v=v(y)\in\mathcal{V}$ such that $v(y)$ interpolates $y$ on $\mathcal{Z}'$, i.e., $z'(y)=z'(v)$ for all $z'\in\mathcal{Z}'$. In addition, $v$ approximates $y$ in the sense that 
    \begin{align*}
        \|v-y\|_{\mathcal{Y}} \leq (1+2\hat{\gamma}) \operatorname{dist}_{\mathcal{Y}}(y,\mathcal{V}). 
    \end{align*}
\end{proposition}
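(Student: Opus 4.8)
The plan is to prove Proposition~\ref{prop:exist interpolant abstract} in two stages: first producing \emph{some} interpolant, and then correcting it so that it also stays close to $y$. For existence, the first thing I would note is that the hypothesis $\|z'\|_{\mathcal{Y}'}\le\hat{\gamma}\,\|z'|_{\mathcal{V}}\|_{\mathcal{V}'}$ says exactly that the restriction map $\mathcal{Z}'\ni z'\mapsto z'|_{\mathcal{V}}\in\mathcal{V}'$ is injective. Fixing a basis $z_1',\dots,z_m'$ of $\mathcal{Z}'$, this makes $z_1'|_{\mathcal{V}},\dots,z_m'|_{\mathcal{V}}$ linearly independent in $\mathcal{V}'$, so the linear map $T\colon\mathcal{V}\to\mathbb{R}^m$, $Tv:=\bigl(z_1'(v),\dots,z_m'(v)\bigr)$, is onto: if its range were a proper subspace $S\subsetneq\mathbb{R}^m$, any $0\neq c\perp S$ would give $\sum_i c_i z_i'(v)=0$ for all $v\in\mathcal{V}$, i.e.\ $\sum_i c_i z_i'$ vanishes on $\mathcal{V}$ and hence is $0$ in $\mathcal{Y}'$ by injectivity, contradicting $c\neq 0$. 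Surjectivity of $T$ immediately yields, for every $y\in\mathcal{Y}$, a $v\in\mathcal{V}$ with $z'(v)=z'(y)$ for all $z'\in\mathcal{Z}'$.

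For the norm estimate I would write $W:=\{z'|_{\mathcal{V}}:z'\in\mathcal{Z}'\}\subset\mathcal{V}'$ (an $m$-dimensional subspace, by the above) and $\mathcal{V}_0:=\ker T={}^{\perp}W=\{v\in\mathcal{V}:z'(v)=0\ \forall z'\in\mathcal{Z}'\}$, so that the interpolants in $\mathcal{V}$ of any prescribed data form a single coset of $\mathcal{V}_0$. Given $\varepsilon>0$, pick $v_0\in\mathcal{V}$ with $\|y-v_0\|_{\mathcal{Y}}\le d+\varepsilon$, where $d:=\operatorname{dist}_{\mathcal{Y}}(y,\mathcal{V})$, and set $g:=y-v_0$. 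Applying the existence step to $g$ produces some $w^{\sharp}\in\mathcal{V}$ interpolating $g$; every interpolant of $g$ is $w^{\sharp}-v_0'$ with $v_0'\in\mathcal{V}_0$, and the goal is to pick $v_0'$ so that $w:=w^{\sharp}-v_0'$ is small, i.e.\ to estimate $\operatorname{dist}_{\mathcal{Y}}(w^{\sharp},\mathcal{V}_0)$. The crucial observation is that, $W$ being finite-dimensional, it is weak-$*$ closed in $\mathcal{V}'$, whence $\mathcal{V}_0^{\perp}=({}^{\perp}W)^{\perp}=W$; combining this with the Hahn--Banach formula for the distance to a subspace gives
\[
\operatorname{dist}_{\mathcal{Y}}(w^{\sharp},\mathcal{V}_0)
=\sup\bigl\{|f(w^{\sharp})|:f\in\mathcal{V}_0^{\perp},\ \|f\|_{\mathcal{V}'}\le 1\bigr\}
=\sup\bigl\{|z'(g)|:z'\in\mathcal{Z}',\ \|z'|_{\mathcal{V}}\|_{\mathcal{V}'}\le 1\bigr\},
\]
where I used that $f=z'|_{\mathcal{V}}\in W$ acts on $w^{\sharp}$ by $z'(w^{\sharp})=z'(g)$. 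By the hypothesis, $\|z'|_{\mathcal{V}}\|_{\mathcal{V}'}\le 1$ forces $\|z'\|_{\mathcal{Y}'}\le\hat{\gamma}$, so $|z'(g)|\le\hat{\gamma}\|g\|_{\mathcal{Y}}$ and hence $\operatorname{dist}_{\mathcal{Y}}(w^{\sharp},\mathcal{V}_0)\le\hat{\gamma}\|g\|_{\mathcal{Y}}$. Choosing $v_0'\in\mathcal{V}_0$ with $\|w^{\sharp}-v_0'\|_{\mathcal{Y}}\le\hat{\gamma}\|g\|_{\mathcal{Y}}+\varepsilon$ and putting $v:=v_0+w$, the function $v\in\mathcal{V}$ interpolates $y$ and satisfies
\[
\|y-v\|_{\mathcal{Y}}=\|g-w\|_{\mathcal{Y}}\le\|g\|_{\mathcal{Y}}+\|w\|_{\mathcal{Y}}\le(1+\hat{\gamma})\|g\|_{\mathcal{Y}}+\varepsilon\le(1+\hat{\gamma})(d+\varepsilon)+\varepsilon .
\]
Taking $\varepsilon:=\hat{\gamma}d/(2+\hat{\gamma})$ when $d>0$ (and $v:=y$ when $d=0$, in which case $y\in\mathcal{V}$) yields $\|y-v\|_{\mathcal{Y}}\le(1+2\hat{\gamma})d$, the asserted bound; when the two infima above are attained --- for instance if $\mathcal{V}$ is a closed subspace of a Hilbert space, as in the applications of this proposition --- one even gets the sharper constant $1+\hat{\gamma}$.

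The only step that is not pure formalism is the identity $\mathcal{V}_0^{\perp}=W$ feeding the distance-duality formula: this is exactly where finite-dimensionality of $\mathcal{Z}'$ (hence of $W$) enters essentially, since for a general subspace one would only have that $({}^{\perp}W)^{\perp}$ equals the weak-$*$ closure of $W$. A minor technical annoyance is that $\operatorname{dist}_{\mathcal{Y}}(y,\mathcal{V})$ and $\operatorname{dist}_{\mathcal{Y}}(w^{\sharp},\mathcal{V}_0)$ need not be attained, and it is precisely this slack that the factor $1+2\hat{\gamma}$ (rather than $1+\hat{\gamma}$) is there to absorb.
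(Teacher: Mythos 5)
Your argument is correct, and it is worth noting that the paper itself does not prove this proposition --- it is imported verbatim from \cite{narcowich2006sobolev} --- so there is no in-paper proof to compare against; what you have written is a legitimate self-contained replacement. The two halves are sound: the surjectivity of the sampling map $T$ follows exactly as you say (a nonzero $c$ orthogonal to the range would produce a nonzero $z'=\sum_i c_i z_i'$ vanishing on $\mathcal{V}$, which the hypothesis forces to be zero in $\mathcal{Y}'$), and the quantitative step correctly combines the Hahn--Banach distance formula $\operatorname{dist}(x,M)=\sup\{|f(x)|:f\in M^{\perp},\|f\|\le 1\}$ with the bipolar identity $\mathcal{V}_0^{\perp}=({}^{\perp}W)^{\perp}=W$, which is where the finite dimensionality of $\mathcal{Z}'$ (hence weak-$*$ closedness of $W$) enters, just as you flag. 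The bookkeeping $(1+\hat{\gamma})(d+\varepsilon)+\varepsilon\le(1+2\hat{\gamma})d$ with $\varepsilon=\hat{\gamma}d/(2+\hat{\gamma})$ is also right, and correctly explains why the advertised constant is $1+2\hat{\gamma}$ rather than $1+\hat{\gamma}$: it is slack for the two possibly unattained infima.

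One small caveat: your disposal of the case $d=0$ by setting $v:=y$ tacitly assumes $\mathcal{V}$ is closed in $\mathcal{Y}$, since $\operatorname{dist}_{\mathcal{Y}}(y,\mathcal{V})=0$ only puts $y$ in $\overline{\mathcal{V}}$. This is really a defect of the statement rather than of your proof --- for a dense, non-closed $\mathcal{V}$ the hypothesis holds trivially while the conclusion fails for any $y\notin\mathcal{V}$ with $z'(y)\neq 0$ --- and closedness does hold in the only place the paper invokes the proposition ($\mathcal{V}=\RKHSN$ inside $\RKHS$ in Proposition \ref{prop:exist interpolant mercer kernel}). It would be worth stating the closedness assumption explicitly rather than asserting $y\in\mathcal{V}$ as if it were automatic.
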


\begin{proposition}\label{prop:exist interpolant mercer kernel}
    Let $\Omega\subset \mathbb{R}^d$ be a domain and $I$ be a countable index set. Suppose $\{\psi_{ij}\}_{i\in \mathbb{N},j\in I}$ is an orthonormal set in $L^2(\Omega)$ and $\{\lambda_i\}_{i\in\mathbb{N}}$ is a sequence of nonnegative real numbers satisfying
    \begin{align}\label{eq:basis summability}
        \underset{x\in \Omega}{\operatorname{sup}}\, \,\sum_{j\in I} \psi_{ij}(x)^2 \leq A_i,\qquad \sum_{i=1}^{\infty} \lambda_i A_i<\infty
    \end{align}
    for some sequence of positive numbers $\{A_i\}_{i=1}^{\infty}$. 
    Consider the kernels
    \begin{align*}
        \Phi_N(x,\tilde{x})=\sum_{i=1}^N \lambda_i\sum_{j\in I} \psi_{ij}(x)\psi_{ij}(\tilde{x}),\qquad \Phi(x,\tilde{x})=\sum_{i=1}^{\infty} \lambda_i\sum_{j\in I} \psi_{ij}(x)\psi_{ij}(\tilde{x}).
    \end{align*}
Suppose $N$ is large enough so that 
    \begin{align}\label{eq:1/gamma bound}
        \sum_{i=N+1}^{\infty}\lambda_i A_i \leq \frac{\sigma_{\min}(K_{\Phi})}{n\gamma^2},
    \end{align}
    where $\gamma>1$ is a fixed constant and ${\sigma_{\min}(K_{\Phi})}$ is the smallest eigenvalue of $K_\Phi.$
    Then, for any $f\in \RKHS$, there exists $f_N\in\RKHSN $ such that 
    \begin{align*}
        f|_{X_n}=f_N|_{X_n} ,\qquad \|f-f_N\|_{\RKHS}\leq (1+2\hat{\gamma}) \operatorname{dist}_{\RKHS} (f,\RKHSN ),
    \end{align*}
    where $\hat{\gamma}=\gamma(\gamma-1)^{-1}$ and $ \operatorname{dist}_{\RKHS} (f,\RKHSN )=\operatorname{inf}_{g\in \RKHSN }\|f-g\|_{\RKHS}$, which in particular implies that 
    \begin{align*}
        \|f_N\|_{\RKHS} \leq (2+2\hat{\gamma}) \|f\|_{\RKHS}. 
    \end{align*}
\end{proposition}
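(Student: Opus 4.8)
The plan is to deduce Proposition \ref{prop:exist interpolant mercer kernel} from the abstract interpolation result, Proposition \ref{prop:exist interpolant abstract}, applied with $\mathcal{Y}=\RKHS$, the subspace $\mathcal{V}=\RKHSN$, and $\mathcal{Z}'=\operatorname{span}\{\delta_{x_1},\dots,\delta_{x_n}\}\subset(\RKHS)'$, where $\delta_{x_k}$ denotes point evaluation at $x_k$. First I would record the standing facts that make this setup legitimate: by \eqref{eq:basis summability}, $\Phi(x,x)=\sum_i\lambda_i\sum_{j\in I}\psi_{ij}(x)^2\le\sum_i\lambda_iA_i<\infty$ (and $\Phi_N(x,x)\le\Phi(x,x)$), so point evaluations are bounded functionals on both $\RKHS$ and $\RKHSN$; moreover $\RKHSN\subset\RKHS$ with the $\RKHS$-norm restricting to the $\RKHSN$-norm, since both RKHSs are described by the same weighted $\ell^2$ formula in the $\psi_{ij}$-coefficients, with only the index range $i\le N$ differing. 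Granting the hypothesis of Proposition \ref{prop:exist interpolant abstract}, it produces $f_N:=v(f)\in\RKHSN$ with $\delta_{x_k}(f_N)=\delta_{x_k}(f)$ for every $k$, i.e. $f_N|_{X_n}=f|_{X_n}$, together with $\|f-f_N\|_{\RKHS}\le(1+2\hat{\gamma})\operatorname{dist}_{\RKHS}(f,\RKHSN)$; the final bound then follows from the triangle inequality and $\operatorname{dist}_{\RKHS}(f,\RKHSN)\le\|f\|_{\RKHS}$, since $0\in\RKHSN$.

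The heart of the argument is verifying the dual-norm domination hypothesis: for every $z'=\sum_{k=1}^n c_k\delta_{x_k}\in\mathcal{Z}'$ one needs $\|z'\|_{(\RKHS)'}\le\hat{\gamma}\,\|z'|_{\RKHSN}\|_{(\RKHSN)'}$. By Riesz representation in each Hilbert space, the representative of $z'$ in $\RKHS$ is $\sum_k c_k\Phi(\cdot,x_k)$, so $\|z'\|_{(\RKHS)'}^2=c^TK_\Phi c$, and likewise $\|z'|_{\RKHSN}\|_{(\RKHSN)'}^2=c^TK_{\Phi_N}c$; thus it suffices to show $c^TK_\Phi c\le\hat{\gamma}^2\,c^TK_{\Phi_N}c$. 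The idea is to control the tail quadratic form: expanding and applying Cauchy--Schwarz together with \eqref{eq:basis summability},
\begin{align*}
    c^T(K_\Phi-K_{\Phi_N})c &= \sum_{i=N+1}^{\infty}\lambda_i\sum_{j\in I}\Big(\sum_{k=1}^n c_k\psi_{ij}(x_k)\Big)^2\\
    &\le \|c\|_2^2\sum_{i=N+1}^{\infty}\lambda_i\sum_{k=1}^n\sum_{j\in I}\psi_{ij}(x_k)^2 \le n\|c\|_2^2\sum_{i=N+1}^{\infty}\lambda_iA_i.
\end{align*}
By the choice of $N$ in \eqref{eq:1/gamma bound} and $c^TK_\Phi c\ge\sigma_{\min}(K_\Phi)\|c\|_2^2$, the right-hand side is at most $\gamma^{-2}\sigma_{\min}(K_\Phi)\|c\|_2^2\le\gamma^{-2}c^TK_\Phi c$, whence $c^TK_{\Phi_N}c\ge(1-\gamma^{-2})c^TK_\Phi c$, i.e. $c^TK_\Phi c\le\frac{\gamma^2}{\gamma^2-1}c^TK_{\Phi_N}c$. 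Since $\sqrt{\gamma^2-1}\ge\gamma-1$ for $\gamma>1$, this yields the domination with $\hat{\gamma}=\gamma(\gamma-1)^{-1}>1$, exactly the constant in the statement.

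I expect the dual-norm step to be the main obstacle: correctly identifying the two functional norms as the quadratic forms $c^TK_\Phi c$ and $c^TK_{\Phi_N}c$, and then bounding the tail form $c^T(K_\Phi-K_{\Phi_N})c$ by $\sigma_{\min}(K_\Phi)\|c\|_2^2$ using the summability condition \eqref{eq:basis summability} and the smallness condition \eqref{eq:1/gamma bound}. Once this estimate is established, invoking Proposition \ref{prop:exist interpolant abstract} and the closing triangle-inequality bookkeeping are routine.
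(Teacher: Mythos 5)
Your proposal is correct and follows essentially the same route as the paper: apply Proposition \ref{prop:exist interpolant abstract} with $\mathcal{Y}=\RKHS$, $\mathcal{V}=\RKHSN$, $\mathcal{Z}'=\operatorname{span}\{\delta_{x_k}\}$, identify the dual norms with the quadratic forms of $K_\Phi$ and $K_{\Phi_N}$, and bound the tail via Cauchy--Schwarz, \eqref{eq:basis summability}, and \eqref{eq:1/gamma bound}. The only (harmless) variation is that you compare the quadratic forms directly, obtaining the slightly sharper constant $\gamma/\sqrt{\gamma^2-1}\le\hat\gamma$, whereas the paper applies the triangle inequality to the Riesz representers $v$ and $v_N$ to get $\hat\gamma=\gamma/(\gamma-1)$.
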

\begin{proof}
    The RKHSs associated with $\Phi_N$ and $\Phi$ take the form 
    \begin{align*}
        \RKHSN  &= \left\{g=\sum_{i=1}^N \sum_{j\in I}a_{ij} \psi_{ij}: \|g\|^2_{\RKHSN }:=\sum_{i=1}^N \lambda_i^{-1}\sum_{j\in I}a_{ij}^2 <\infty\right\},\\
        \RKHS &= \left\{g=\sum_{i=1}^{\infty} \sum_{j\in I}a_{ij} \psi_{ij}: \|g\|^2_{\RKHS}:=\sum_{i=1}^{\infty} \lambda_i^{-1} \sum_{j\in I} a_{ij}^2 <\infty\right\}. 
    \end{align*}
    We shall apply Proposition \ref{prop:exist interpolant abstract} with $\mathcal{Y}=\RKHS$, $\mathcal{V}=\RKHSN $ and $\mathcal{Z}'=$span$\{\delta_{x_1},\ldots,\delta_{x_n}\}$. Now take $z'=\sum_{k=1}^n z_k\delta_{x_k}\in \mathcal{Z}'$. We need to show that, for some constant $\hat{\gamma},$ 
    \begin{align*}
        \|z'\|_{\RKHS'} \leq \hat{\gamma} \|z'|_{\RKHSN }\|_{\RKHSN '}.
    \end{align*}
    By Riesz's representation theorem and the reproducing property, we have 
    \begin{align}\label{eq:z prime H norm}
        \|z'\|_{\RKHS'}=\Big\|\sum_{k=1}^n z_k\delta_{x_k}\Big\|_{\RKHS'}= \Big\|\sum_{k=1}^n z_k \Phi(\cdot,x_k)\Big\|_{\RKHS}= : \|v\|_{\RKHS}.
    \end{align}
    To compute the $\RKHSN '$ norm of $z'|_{\RKHSN }$, take $g=\sum_{i=1}^N\sum_{j\in I} a_{ij} \psi_{ij}\in \RKHSN $. Then, 
    \begin{align}\label{eq:z prime g}
        z'(g)= \sum_{k=1}^n z_k \sum_{i=1}^N\sum_{j\in I}  a_{ij} \psi_{ij}(x_k)=\sum_{i=1}^N \sum_{j\in I}a_{ij} \sum_{k=1}^n z_k \psi_{ij}(x_k)= :\langle g, v_N \rangle_{\RKHSN }, 
    \end{align}
    where 
    \begin{align*}
        v_N&=\sum_{i=1}^N \lambda_i\sum_{j\in I}\Bigl(\sum_{k=1}^n z_k\psi_{ij}(x_k) \Bigr)\psi_{ij}(\cdot)\\
        & = \sum_{k=1}^n z_k\Bigl(\sum_{i=1}^N\lambda_i \sum_{j\in I} \psi_{ij}(x_k)\psi_{ij}(\cdot)\Bigr)
         =\sum_{k=1}^n z_k \Phi_N(\cdot,x_k).   
    \end{align*}
    Now by Cauchy-Schwarz, \eqref{eq:z prime g} implies that 
    \begin{align*}
        |z'(g)| \leq \|g\|_{\RKHSN } \|v_N\|_{\RKHSN },
    \end{align*}
    so that $\|z'|_{\RKHSN }\|_{\RKHSN '}\leq \|v_N\|_{\RKHSN }$. Taking $g=v_N$ then gives that
    \begin{align} \label{eq:z prime HN norm}
        \|z'|_{\RKHSN }\|_{\RKHSN '}= \|v_N\|_{\RKHSN }. 
    \end{align}
    Combining \eqref{eq:z prime H norm} and \eqref{eq:z prime HN norm}, the result will follow if we can show 
    \begin{align*}
        \|v\|_{\RKHS}\leq \hat{\gamma} \|v_N\|_{\RKHSN }
    \end{align*}
    for some constant $\hat{\gamma}$. To see this, we first notice that $\|\cdot \|_{\RKHS}$ coincides with $\|\cdot \|_{\RKHSN }$ over $\RKHSN $, and compute 
    \begin{align*}
        \|v-v_N\|_{\RKHS}^2 & \hspace{-0.1cm}= \hspace{-0.05cm} \Big\|\sum_{k=1}^n z_k \bigl(\Phi(\cdot,x_k)-\Phi_N(\cdot,x_k)\bigr)\Big\|_{\RKHS}^2
       \hspace{-0.24cm}  = \hspace{-0.05cm}\Bigl\|\sum_{k=1}^n z_k \sum_{i=N+1}^{\infty} \hspace{-0.15cm} \lambda_i\sum_{j\in I}\psi_{ij}(x_k)\psi_{ij}(\cdot) \Bigr\|_{\RKHS}^2 \\
         & \hspace{-0.1cm} = \Bigl\|\sum_{i=N+1}^{\infty}\sum_{j\in I} \lambda_i\Bigl(\sum_{k=1}^n z_k\psi_{ij}(x_k)\Bigr) \psi_{ij}(\cdot)\Bigr\|_{\RKHS}^2
        \hspace{-0.15cm}  = \hspace{-0.05cm} \sum_{i=N+1}^{\infty} \hspace{-0.15cm}  \lambda_i \sum_{j\in I} \Bigl(\sum_{k=1}^nz_k\psi_{ij}(x_k)\Bigr)^2.
    \end{align*}
    By Cauchy-Schwarz applied to $\sum_{k=1}^n z_k \psi_{ij}(x_k)$, the last bound together with \eqref{eq:basis summability} implies that
    \begin{align*}
        \|v-v_N\|_{\RKHS}^2&
        \leq \sum_{i=N+1}^{\infty} \lambda_i\sum_{j\in I}\sum_{k=1}^n z_k^2 \sum_{k=1}^n \psi_{ij}(x_k)^2 \\
        &\leq \sum_{i=N+1}^{\infty} \lambda_i\sum_{k=1}^n z_k^2 \sum_{k=1}^n \sum_{j\in I}\psi_{ij}(x_k)^2 \leq n 
        \sum_{k=1}^n z_k^2 \sum_{i=N+1}^{\infty}\lambda_iA_i.  
    \end{align*}
    On the other hand, \eqref{eq:z prime H norm} implies that 
    \begin{align*}
        \|v\|_{\RKHS}^2 =\sum_{i=1}^n \sum_{j=1}^n z_iz_j \Phi(x_i,x_j) \geq \sigma_{\text{min}}(K_{\Phi}) \sum_{k=1}^nz_k^2.
    \end{align*}
   Therefore, using \eqref{eq:1/gamma bound},
    \begin{align*}
        \|v-v_N\|_{\RKHS} \leq \sqrt{\frac{n\sum_{i=N+1}^{\infty}\lambda_i A_i}{\sigma_{\text{min}}(K_{\Phi})}} \|v\|_{\RKHS}\leq \frac{1}{\gamma} \|v\|_{\RKHS}.
    \end{align*}
    Hence, 
    \begin{align*}
        \|v\|_{\RKHS} \leq \|v-v_N\|_{\RKHS} +\|v_N\|_{\RKHS}\leq \frac{1}{\gamma} \|v\|_{\RKHS}+\|v_N\|_{\RKHSN },
    \end{align*}
    where we have used that $\|\cdot\|_{\RKHS}$ coincides with $\|\cdot\|_{\RKHSN }$ on $\RKHSN $. Rearranging, 
    \begin{align*}
        \|v\|_{\RKHS} \leq \frac{\gamma}{\gamma-1} \|v_N\|_{\RKHSN }=\hat{\gamma} \|v_N\|_{\RKHSN }, 
    \end{align*}
    which concludes the proof. 
\end{proof}

\begin{lemma}\label{lemma:positive definiteness}
    Let $\Phi_N(x,\tilde{x})=\sum_{i=1}^N\lambda_i \psi_i(x)\psi_i(\tilde{x})$. Suppose $\{x_1,\ldots,x_n\}\subset \Omega$ are $n$ distinct points and let $K\in\mathbb{R}^{n\times n}$ be the covariance matrix whose entries are $K_{ij}=\Phi_N(x_i,x_j)$. Then, $K$ is invertible if there exist functions $\{f_i\}_{i=1}^n\subset \operatorname{span}\{\psi_1,\ldots,\psi_N\}$ such that $f_i(x_j)=\delta_{ij}$.
\end{lemma}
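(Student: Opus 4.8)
The plan is to use that $K$ is a Gram-type matrix and argue via its null space. First I would record that $K$ is symmetric and positive semidefinite: for any $v=(v_1,\dots,v_n)^T\in\R^n$,
\[
v^T K v = \sum_{i,j=1}^n v_i v_j \sum_{k=1}^N \lambda_k \psi_k(x_i)\psi_k(x_j) = \sum_{k=1}^N \lambda_k\Big(\sum_{i=1}^n v_i\psi_k(x_i)\Big)^2 \ge 0,
\]
using $\lambda_k\ge 0$. Since $K$ is square, invertibility is equivalent to the implication $Kv=0\Rightarrow v=0$, and $Kv=0$ forces $v^T K v=0$.

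Next, assuming $v^T K v = 0$, the displayed identity shows that every summand vanishes, hence $\sum_{i=1}^n v_i\psi_k(x_i)=0$ for each index $k$ with $\lambda_k>0$ — which in all the applications of this lemma means for every $k=1,\dots,N$. I would then bring in the interpolating functions: writing $f_j=\sum_{k=1}^N c_{jk}\psi_k \in \operatorname{span}\{\psi_1,\dots,\psi_N\}$, the hypothesis $f_j(x_i)=\delta_{ij}$ gives
\[
v_j = \sum_{i=1}^n v_i\delta_{ij} = \sum_{i=1}^n v_i f_j(x_i) = \sum_{k=1}^N c_{jk}\sum_{i=1}^n v_i\psi_k(x_i) = 0
\]
for every $j=1,\dots,n$. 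Therefore $v=0$, so $K$ is injective and hence invertible.

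There is no genuine obstacle in this argument; the only point deserving a little care is the role of the coefficients $\lambda_k$. If some $\lambda_k$ were to vanish, then $v^T K v=0$ would only yield $\sum_i v_i\psi_k(x_i)=0$ for the indices with $\lambda_k>0$, so strictly speaking one should either assume all $\lambda_k>0$ (which holds in every instance used in Section \ref{sec:examples}) or require the $f_j$ to lie in $\operatorname{span}\{\psi_k:\lambda_k>0\}$; under either convention the computation above goes through verbatim.
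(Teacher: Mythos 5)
Your proof is correct, and it takes a more elementary route than the paper's. The paper identifies $K$ as the Gram matrix of the evaluation functionals $\delta_{x_1},\dots,\delta_{x_n}$ in the dual of $\RKHSN$ (via Riesz representation and the reproducing property), and uses the $f_j$ only to establish that these functionals are linearly independent; positive definiteness of a Gram matrix of independent vectors then finishes the job. You instead expand $v^TKv$ as the weighted sum of squares $\sum_k\lambda_k\bigl(\sum_i v_i\psi_k(x_i)\bigr)^2$ and recover each coordinate $v_j$ by testing against $f_j$ --- the same final step as the paper's, but with the Hilbert-space packaging stripped away. What your version buys is that it makes the role of the weights visible: if some $\lambda_k=0$, vanishing of $v^TKv$ only controls the moments with $\lambda_k>0$, so one needs the $f_j$ to lie in $\operatorname{span}\{\psi_k:\lambda_k>0\}$. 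You flag this correctly, and it is worth noting that the paper's proof carries the same implicit restriction: for the identity $\bigl(\sum_i a_i\delta_{x_i}\bigr)(f_j)=0$ to follow from $\sum_i a_i\delta_{x_i}=0$ in $\RKHSN'$, the $f_j$ must belong to $\RKHSN$, which is spanned only by the $\psi_k$ with $\lambda_k>0$. In every application in Section \ref{sec:examples} all the weights are strictly positive, so the caveat is harmless in context.
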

\begin{proof}
    The given conditions imply that the point evaluation functionals $\{\delta_{x_i}\}_{i=1}^n$ are linearly independent. Indeed, suppose $\sum_{i=1}^n a_i \delta_{x_i}=0$. Then, 
    \begin{align*}
        0=\Big(\sum_{i=1}^n a_i \delta_{x_i} \Big)(f_j)= \sum_{i=1}^n a_i f_j(x_i)=a_j,\qquad j=1,\ldots,n.   
    \end{align*}
    Denoting $\RKHSN '$  the dual space of $\RKHSN $, it follows by Riesz representation and the reproducing property of $\Phi_N$ that $\langle \delta_{x_i},\delta_{x_j}\rangle_{\RKHSN '}=\Phi_N(x_i,x_j)$. Therefore, the covariance matrix $K$ is the Gram matrix associated with $\{\delta_{x_i}\}_{i=1}^n$, whose linear independence is equivalent to positive definiteness of $K$. 
\end{proof}

\begin{lemma}[{{\cite[Appendix]{griebel2015multiscale}}}]\label{lemma:lower bound on wavelet kernel matrix}
Suppose $2^{-N}R<\q$. Then, the kernel matrix $K_{\Phi_N}=\{\Phi_N(x_i,x_j)\}_{i,j=1}^n$ with $\Phi_N$ defined in \eqref{eq:wavelet kernel} is invertible. 
\end{lemma}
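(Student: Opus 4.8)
The plan is to produce, for each design point $x_i$, a Lagrange-type function $f_i\in\RKHSN$ with $f_i(x_j)=\delta_{ij}$; invertibility of $K_{\Phi_N}$ then follows exactly as in the proof of Lemma~\ref{lemma:positive definiteness}. Concretely, since $\Phi_N$ is the reproducing kernel of $\RKHSN$ we have $\Phi_N(\cdot,x)\in\RKHSN$ for every $x$, and for $z\in\R^n$ the reproducing property gives $z^\top K_{\Phi_N}z=\bigl\|\sum_{i=1}^n z_i\Phi_N(\cdot,x_i)\bigr\|_{\RKHSN}^2$. If this vanishes then $\sum_i z_i g(x_i)=0$ for all $g\in\RKHSN$, and choosing $g=f_j$ yields $z_j=0$; hence $K_{\Phi_N}$ is positive definite and in particular invertible.

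The main preliminary step is to identify, as a vector space, $\RKHSN$ with the resolution space $V_{N+1}$ of the multiresolution analysis. Indeed $V_{N+1}=V_0\oplus(V_1\ominus V_0)\oplus\cdots\oplus(V_{N+1}\ominus V_N)$, with $\{\phi_k\}_{k\in\Z^d}$ an orthonormal basis of $V_0$ and $\{\psi_{jk}^\iota\}_{\iota\in\I,k\in\Z^d}$ one of $V_{j+1}\ominus V_j$; since the weights $2^{2js}$ for $0\le j\le N$ are bounded above and below, $\|\cdot\|_{\RKHSN}$ is equivalent to $\|\cdot\|_{L^2(\R^d)}$ on this span, so $\RKHSN=V_{N+1}$ with equivalent norms. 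In particular the level-$(N+1)$ scaling functions $\phi_{N+1,k}=2^{(N+1)d/2}\phi(2^{N+1}\,\cdot\,-k)$ belong to $\RKHSN$.

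Then I would build each $f_i$ from a single finest-level scaling function. A standard property of the (continuous, compactly supported) scaling function of a multiresolution analysis is that $\sum_{k\in\Z^d}\phi(\,\cdot\,-k)$ is a nonzero constant, so at every point of $\R^d$ some translate $\phi_{N+1,k}$ is nonzero; pick $k_i$ with $\phi_{N+1,k_i}(x_i)\ne 0$. Because $\phi$ is supported in $B(0,R)$, the function $\phi_{N+1,k_i}$ is supported in the ball $B\bigl(2^{-(N+1)}k_i,2^{-(N+1)}R\bigr)$, whose diameter $2^{-N}R$ satisfies $2^{-N}R<\q<2\q=\min_{i\ne j}|x_i-x_j|$ by hypothesis; since $x_i$ lies in this ball, no other design point can, so $\phi_{N+1,k_i}(x_j)=0$ for $j\ne i$. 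Setting $f_i:=\phi_{N+1,k_i}/\phi_{N+1,k_i}(x_i)\in V_{N+1}=\RKHSN$ gives $f_i(x_j)=\delta_{ij}$, completing the argument.

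The one genuinely non-routine ingredient is the non-degeneracy used to choose $k_i$ — that at every point some finest-level scaling function does not vanish — which is where the multiresolution structure (and hence assumptions (W1)--(W2) on $\phi$) truly enters; the support/separation bookkeeping is precisely what the hypothesis $2^{-N}R<\q$ is designed to handle, and the remaining steps are the standard Gram-matrix argument already used in Lemma~\ref{lemma:positive definiteness}.
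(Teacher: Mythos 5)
Your proof is correct, but it takes a genuinely different route from the paper's. The paper decomposes $K_{\Phi_N}=K_{-1}+\sum_{j=0}^N 2^{-2js}K_j$ into positive semidefinite pieces, drops everything but the finest \emph{wavelet} level to get $K_{\Phi_N}\succeq 2^{-2Ns}K_N$, and uses the same support-versus-separation argument you use to show that $K_N$ is diagonal; positivity of its diagonal entries then rests on the non-degeneracy constant $c=\inf_x\sum_{\iota\in\I}\sum_{k\in\Z^d}\psi^\iota(x-k)^2>0$, which the paper only asserts (``can be determined numerically for e.g.\ the Daubechies wavelet''). You instead build cardinal functions from finest-level \emph{scaling} functions $\phi_{N+1,k}$ and feed them into the duality argument of Lemma \ref{lemma:positive definiteness}; the non-vanishing you need is the partition-of-unity identity $\sum_{k}\phi(\cdot-k)\equiv 1$, which is a provable consequence of the multiresolution structure rather than a numerically checked constant, so in this respect your argument is cleaner. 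The trade-offs: the paper's route yields a quantitative bound $\smin(K_{\Phi_N})\gtrsim 2^{N(d-2s)}$ rather than bare invertibility, which is the kind of information that could feed into the Part~II error bounds; your route requires the (routine, but worth stating carefully) identification of $\RKHSN$ with $V_{N+1}$ as a set, so that $\phi_{N+1,k}\in\RKHSN$ --- your norm-equivalence observation, using that the weights $2^{2js}$, $0\le j\le N$, are bounded above and below at fixed $N$, handles this correctly. Both proofs use the hypothesis $2^{-N}R<\q$ for exactly the same purpose, namely to ensure that no two design points lie in the support of the same finest-level basis function.
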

\begin{proof}
    First, we notice that the kernel matrix can be written as 
    \begin{align*}
        K_{\Phi_N} = K_{-1}+\sum_{j=0}^N 2^{-2js} K_j,
    \end{align*}
    where 
    \begin{align*}
        [K_{-1}]_{pq}&=\sum_{k\in \Z^d} \phi_k(x_p)\phi_k(x_q),\\
        [K_j]_{pq}&=\sum_{\iota\in \I}\sum_{k\in \mathbb{Z}^d} \psi^\iota_{jk}(x_p)\psi^\iota_{jk}(x_q)\qquad j\geq 0.
    \end{align*}
    Therefore, for $u\in \R^n$, 
    \begin{align*}
        u^T K_{\Phi_N} u =u^TK_{-1}u+ \sum_{j=0}^N 2^{-2js} u^TK_j u \geq 2^{-2Ns} u^T K_N u \geq 2^{-2Ns} \sigma_{\min}(K_N) u^Tu,
    \end{align*}
    which implies that 
    \begin{align}\label{eq:smallest eval of wavelet kernel matrix}
        \sigma_{\min}(K_{\Phi_N})\geq 2^{-2Ns} \sigma_{\min}(K_N).
    \end{align}         
    Recall that the wavelet functions $\{\psi^\iota\}_{\iota\in\I}$ is supported in $B(0,R)$ so that each $\psi_{jk}^\iota$ has support size $2^{-j}R$. Now the assumption $2^{-N}R < \q$ implies that for any $p\neq q$, 
    \begin{align*}
        [K_N]_{pq}=\sum_{\iota\in\I}\sum_{k\in \mathbb{Z}^d} \psi_{Jk}^\iota(x_p)\psi_{Jk}^\iota(x_q)=0
    \end{align*}
    since only one of $x_p$ or $x_q$ could lie in the support of the same $\psi_{Jk}^\iota$. Therefore, $K_N$ is diagonal with entries
    \begin{align*}
        [K_N]_{pp}=\sum_{\iota\in\I}\sum_{k\in \mathbb{Z}^d} \psi_{Jk}^\iota(x_p)^2 \geq  2^{Nd} \underset{x\in \Omega}{\operatorname{inf}}\,\,\sum_{\iota\in\I}\sum_{k\in \mathbb{Z}^d} \psi^\iota(2^Nx-k)^2\geq c2^{Nd},\qquad \forall p
    \end{align*}
    for a constant $c>0$ (which can be determined numerically for e.g. the Daubechies wavelet). 
    Hence $\smin(K_N)>0$, which together with \eqref{eq:smallest eval of wavelet kernel matrix} gives invertibility of $K_{\Phi_N}$.  
\end{proof}

\begin{lemma}[{{\cite[Theorem 4.4.20]{brenner2008mathematical}}}]
\label{lemma:FEM approx error}
Suppose $\Omega$ is a polyhedral domain in $\mathbb{R}^d$ and  let $\{\mathcal{T}^h\}$ be a family of non-degenerate  triangulation. Consider a reference finite element $(K,\mathcal{P},\mathcal{N})$ satisfying 
\begin{enumerate}
    \item $K$ is star-shaped with respect to some ball;
    \item $\mathcal{P}_{m-1}(K)\subset \mathcal{P}\subset W^{m}_{\infty}(K)$ with $\mathcal{P}_m(K)$ the set of polynomials of degree less than or equal to $m$ over $K$;
    \item $\mathcal{N}\subset (C^{\ell}(K))'$, i.e., the nodal variables include derivatives up to order $\ell$. 
\end{enumerate}
For all $T\in\mathcal{T}^h$, let $(T,\mathcal{P}_T,\mathcal{N}_T)$ be the affine-equivalent elements. Suppose $m>\ell+d/2$. 
Then, there exists a constant $C$ independent of $v$ such that, for $0\leq r\leq m,$
\begin{align*}
    \Big(\sum_{T\in \mathcal{T}^h}\|v-\Pi_h v\|_{W_2^r(T)}\Big)^{1/2} &\leq C h^{m-r} |v|_{W_2^m(\Omega)}, \quad \forall v\in W_2^m(\Omega),
\end{align*}
where $\Pi_h$ is the finite element interpolation operator. Moreover, for $0\leq r \leq \ell,$ 
\begin{align*}
    \underset{T\in \mathcal{T}^h}{\operatorname{max}}\,\,\|v-\Pi_h v\|_{W_{\infty}^{r}(T)} & \leq C  h^{m-r-d/2} |v|_{W_2^m(\Omega)}, \quad \forall v\in W_2^m(\Omega).
\end{align*}
\end{lemma}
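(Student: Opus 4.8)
The plan is to follow the classical reference-element-plus-scaling strategy that underlies the Bramble--Hilbert approach. I would first prove the estimate on the fixed reference element $(K,\mathcal{P},\mathcal{N})$ and then transfer it to each physical element $T$ via the affine map $F_T(\hat x) = B_T \hat x + b_T$ realizing the affine equivalence $(T,\mathcal{P}_T,\mathcal{N}_T)$. For a function $v$ on $T$ I write $\hat v = v\circ F_T$, and the key algebraic fact is that affine equivalence makes the local interpolation commute with pullback, $\widehat{\Pi_h v}\big|_T = \hat\Pi\,\hat v$, where $\hat\Pi$ is the reference interpolation operator.

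The first step is to establish boundedness and the polynomial-reproduction property of $\hat\Pi$. Since $m > \ell + \tfrac{d}{2}$, the Sobolev embedding $W_2^m(K)\hookrightarrow C^\ell(K)$ holds, so each nodal variable in $\mathcal{N}\subset (C^\ell(K))'$ extends to a bounded functional on $W_2^m(K)$; hence $\hat\Pi:W_2^m(K)\to\mathcal{P}$ is well defined and bounded, all norms on the finite-dimensional space $\mathcal{P}$ being equivalent. Because $\mathcal{P}_{m-1}(K)\subset\mathcal{P}$ and $\hat\Pi$ reproduces $\mathcal{P}$, the operator $I-\hat\Pi$ annihilates $\mathcal{P}_{m-1}(K)$. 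Viewing $I-\hat\Pi$ as a bounded map into $W_2^r(K)$ (for the first estimate) or into $W_\infty^r(K)$ (for the second, again using $m>r+\tfrac{d}{2}$, valid since $r\le\ell$), the Bramble--Hilbert lemma yields
\[
\|\hat v - \hat\Pi\hat v\|_{W_2^r(K)} \le C\,|\hat v|_{W_2^m(K)}, \qquad \|\hat v - \hat\Pi\hat v\|_{W_\infty^r(K)} \le C\,|\hat v|_{W_2^m(K)}.
\]

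The second step is the scaling argument. The standard change-of-variables inequalities for Sobolev seminorms give, on each $T$,
\[
|v-\Pi_h v|_{W_2^r(T)} \le C\,\|B_T^{-1}\|^r |\det B_T|^{1/2}\,|\hat v-\hat\Pi\hat v|_{W_2^r(K)}, \qquad |\hat v|_{W_2^m(K)} \le C\,\|B_T\|^m |\det B_T|^{-1/2}\,|v|_{W_2^m(T)}.
\]
Chaining these with the reference bound cancels the determinant factors, leaving $|v-\Pi_h v|_{W_2^r(T)} \le C\,\|B_T^{-1}\|^r\|B_T\|^m\,|v|_{W_2^m(T)}$. Non-degeneracy of the family $\{\mathcal{T}^h\}$ supplies the mesh bounds $\|B_T\|\le Ch_T$ and $\|B_T^{-1}\|\le C\rho_T^{-1}$ with $h_T/\rho_T$ uniformly bounded, so that $\|B_T^{-1}\|^r\|B_T\|^m \le C h_T^{m-r}\le Ch^{m-r}$. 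Summing the squared seminorm estimates over $0\le k\le r$ and over all $T$ (absorbing lower-order $h$-powers, valid for bounded $h$) produces the first displayed inequality. For the $W_\infty^r$ bound the $L^\infty$ scaling carries no determinant factor in the norm itself, so the same chaining gives $\|v-\Pi_h v\|_{W_\infty^r(T)} \le C\,\rho_T^{-r}h_T^m|\det B_T|^{-1/2}|v|_{W_2^m(T)}$; using $|\det B_T|\asymp h_T^{d}$ and non-degeneracy yields the rate $h^{m-r-d/2}$ after taking the maximum over $T$.

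The main obstacle is the reference-element step: one must verify carefully that $\hat\Pi$ is bounded from $W_2^m(K)$ into the relevant target space, which is precisely where the hypothesis $m>\ell+\tfrac{d}{2}$ enters, through the Sobolev embedding that renders the nodal functionals continuous. Once this continuity is secured, both the Bramble--Hilbert lemma and the purely algebraic scaling estimates are routine. Since this lemma is quoted verbatim from \cite[Theorem 4.4.20]{brenner2008mathematical}, I would ultimately defer to that reference for the detailed constants and the precise bookkeeping of the summation.
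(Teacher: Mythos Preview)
Your sketch is the standard Bramble--Hilbert-plus-scaling argument and is correct; it is precisely the proof strategy in \cite[Theorem~4.4.20]{brenner2008mathematical}. Note, however, that the paper does not supply its own proof of this lemma: it is stated in the appendix purely as a citation of that textbook result, so there is nothing further to compare against.
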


\bibliographystyle{abbrvnat} 
\bibliography{bib}       

\end{document}